\algnewcommand{\LineComment}[1]{\State \(\triangleright\) #1}
\definecolor{mygreen}{RGB}{28,172,0}
\definecolor{mylilas}{RGB}{170,55,241}
\newcommand{\F}{{\mathbb F}}
\def\og{\leavevmode\raise.3ex\hbox{$\scriptscriptstyle\langle\!\langle$~}}
\def\fg{\leavevmode\raise.3ex\hbox{~$\!\scriptscriptstyle\,\rangle\!\rangle$}}
\def\F{\mathbb{F}}
\newcommand{\Ld}[2][D]{\ensuremath{\mathcal{L}(#2\mathcal{#1})}} 
\newcommand{\D}[1][D]{\ensuremath{\mathcal{#1}}}
\begin{document}

\title{Multiplication in finite fields with Chudnovsky-type algorithms on the projective line}

\author{St\'ephane Ballet \and Alexis Bonnecaze \and Bastien Pacifico} 
\institute{Aix Marseille Univ, CNRS, Centrale Marseille, I2M, Marseille,  France
\email{Stephane.Ballet@univ-amu.fr\\Alexis.Bonnecaze@univ-amu.fr\\Bastien.Pacifico@univ-amu.fr}}

\maketitle

\begin{abstract}
We propose a Recursive Polynomial Generic Construction (RPGC) of
multiplication algorithms in any finite field $\mathbb{F}_{q^n}$ based
on the method of D.V. and G.V. Chudnovsky specialized on the
projective line. They are usual polynomial interpolation algorithms in
small extensions and the Karatsuba algorithm is seen as a particular
case of this construction. Using an explicit family of such
algorithms, we show that the bilinear complexity is quasi-linear with
respect to the extension degree $n$, and we give a uniform bound for
this complexity. We also prove that the construction of these
algorithms is deterministic and can be done in polynomial time. We
give an asymptotic bound for the complexity of their construction.
\end{abstract}

\keywords{Finite fields, Bilinear complexity, Polynomial interpolation, Algebraic function fields.}



\section{Introduction}

Multiplication in finite fields has been at the heart of many works since the end of the twentieth century. In addition to being interesting for the theoretical side, this subject is also very current for its applications in computer science, such as in information theory. Different strategies have been studied to build a multiplication algorithm. Among them, interpolation algorithms on algebraic curves, due to D.V. and G.V. Chudnovsky \cite{chch}, have been widely studied for their qualities in terms of bilinear complexity \cite{survey}. Nevertheless, they present a certain number of weaknesses, including their difficulty of construction and use. In this paper, we propose a construction method that allows us to bypass these difficulties, by doing polynomial interpolation, while preserving the benefit of Chudnovsky-type algorithms.

Multiplications in a degree $n$ extension of $\mathbb{F}_q$ require different kind of operations in $\mathbb{F}_q$.
Let $x=\sum_{i=1}^{n}x_ie_i$ and $y=\sum_{i=1}^{n}y_ie_i$ be two elements of $\mathbb{F}_{q^n}$, in a basis $\{e_1,\ldots,e_n\}$ of $\mathbb{F}_{q^n}$ over $\mathbb{F}_q$. By the usual method, the product of $x$ and $y$ is given by the formula 
\begin{equation}\label{calculdirect}
z=xy=\sum_{h=1}^{n}z_he_h=\sum_{h=1}^{n}\biggr( \sum_{i,j=1}^{n}t_{ijh}x_iy_j\biggl)e_h,
\end{equation}
 with $$e_ie_j=\sum_{h=1}^{n}t_{ijh}e_h,$$ 
where $t_{ijh}\in \mathbb{F}_q$ are constants in $\mathbb{F}_q$. Two different types of multiplications are involved in this product. The scalar ones are multiplications by a constant in $\mathbb{F}_q$, and the bilinear ones depend on the two elements being multiplied (i.e. the $x_iy_j$). Of the two, bilinear multiplications are known to be computationally heavier (\cite{shtsvl}, see Survey \cite{survey}). This explains the motivation to reduce the number of bilinear multiplications in multiplication algorithms and led to the study of the bilinear complexity, that can be defined as follows.

\begin{definition}
Let $\mathcal U $ be an algorithm for the multiplication in $\mathbb F_{q^n}$ over $\mathbb{F}_q$. Its number of bilinear multiplications is called its bilinear complexity, written $\mu(\mathcal{U})$.
The bilinear complexity of the multiplication in $\mathbb{F}_{q^n}$ over $\mathbb{F}_q$, denoted by $\mu_q(n)$, is the quantity:
$$\mu_q(n)=\underset{\mathcal U}{\min}~\mu(\mathcal{U}),$$
where $\mathcal{U}$ is running over all multiplication algorithms in $\mathbb{F}_{q^n}$ over $\mathbb{F}_q$.
\end{definition}

\subsection{Some known-results}

From the works of Winograd and De Groote \cite{groo} applied to the multiplication in any finite field $\mathbb{F}_{q^n}$, it is proven that for all $n$ we have $\mu_q(n)\geq2n-1$,  equality being ensured if and only if $n\leq\frac{1}{2}q+1$ (\cite{survey}, Theorem 2.2). Winograd also proved that this lower bound is obtained with interpolation algorithms \cite{wino3}. 
In 1987, D.V. and G.V. Chudnovsky proposed an interpolation method on algebraic curves \cite{chch}, generalizing polynomial interpolation. This method makes it possible to multiply in any extension of degree $n$ of $\mathbb{F}_q$, with a good bilinear complexity, provided that one has an algebraic curve with a sufficient number of rational points. This original algorithm is called the Chudnovsky-Chudnovsky Multiplication Algorithm (CCMA). More generally, a multiplication algorithm using interpolation over algebraic curves is said to be of type Chudnovsky.

For an increasing degree of the extension, the interpolation requires more and more rational points (i.e. rational places). From the Serre-Weil bound, the number of rational places is bounded for a fixed genus. Hence, the classical strategy is to build these algorithms over function fields of growing genus. Ballet proved that the bilinear complexity is linear in the degree of the extension (\cite{ball1}, see \cite{survey}) using the original algorithm over an explicit tower of function fields defined by Garcia and Stichtenoth \cite{gast}. However, it is not clear that these algorithms can be constructed in a reasonable time since we have no method to find the place of degree $n$ required to represent $\mathbb{F}_{q^n}$ (\cite{shtsvl}, Remark 5). Moreover, there is no generic and deterministic construction for both the divisors and the basis of the Riemann-Roch spaces involved in the algorithms. 

The strategy of growing genus was natural since the original algorithm evaluates only on rational places of a function field. But several years later, thanks to the works of Ballet and Rolland \cite{baro1}, Arnaud \cite{arna1}, Cenk and Özbudak \cite{ceoz}, and Randriambololona \cite{randJComp}, the method has been extended to the evaluations at places of higher degrees, and to the use of derivative evaluations. These generalizations led to the introduction of another strategy for constructing the algorithm for asymptotically large extensions. The evaluation at places of higher degrees allows one to fix a function field and to evaluate at places of growing degrees. In \cite{babotu}, Ballet, Bonnecaze and Tukumuli built Chudnovsky-type algorithms with interpolation only over elliptic curves, i.e. fixing the genus $g$ of the function field to be equal to 1, and using places of increasing degrees. This work gave a quasi-linear asymptotic bound for the bilinear complexity of these algorithms with respect to the degree of the extension. Moreover, they can be constructed in polynomial time. This latest result is not yet established for the growing genus strategy.

\subsection{New results and organization}

In this paper, we build Chudnovsky-type algorithms for the multiplication in any finite field $\mathbb{F}_{q^n}$, with interpolation only over the projective line, i.e. fixing the genus $g$ to be equal to 0, and using places of increasing degrees. In small extensions, the bilinear complexity of the obtained algorithms can equalize the best known bound and sometimes even improve it. Compared with the construction over elliptic curves, our work has the advantages of giving an uniform bound for the bilinear complexity of our algorithms and giving a generic construction of algorithms for the multiplication in any finite field. Namely, the implied Riemann-Roch spaces and their associated representations are generic. Moreover, our set up enables us to interpolate with polynomials. This makes our algorithms closer to well-known algorithms based on polynomial interpolation such as Karatsuba or Cook.  

This paper begins with an overview of the current generalizations of CCMA. Section 3 focuses on the multiplication in small extensions. We explain how to reach the equality in the Winograd-De Groote bound with our construction. Moreover, this construction naturally integrates the trick of Karatsuba algorithm. In Section 4, we give a Recursive Polynomial Generic Construction (RPGC) of algorithms for the multiplication in any extension of $\mathbb{F}_q$, and give a natural strategy to build algorithms with a good bilinear complexity. In Section 5, we give a particular explicit construction of such algorithms having a quasi-linear uniform bound for their bilinear complexities, with respect to the extension degrees. Then, we show that the construction of these algorithms is deterministic, and give a polynomial asymptotic bound for this construction. 

\section{Chudnovsky and Chudnovsky Multiplication Algorithm}

A large description of CCMA and its generalizations is given in \cite{survey}. We first recall some basics of function field theory and introduce the notions required for our study. Then, we recall a specialized version of the generalized theorem/algorithm over a function field of arbitrary genus $g$, which will be useful for the proposed construction.

Let $F/\mathbb{F}_q$ be a function field of genus $g$ over $\mathbb{F}_q$. For $\mathcal{O}$ a valuation ring, the place $P$ is defined to be $P=\mathcal{O} \setminus \mathcal{O}^\times$. We denote by $F_P$ the residue class field at the place $P$, that is isomorphic to $\mathbb{F}_{q^d}$, $d$ being the degree of the place. A rational place is a place of degree $1$. We also denote by $B_d(F/{\mathbb F}_q)$ the number of places of degree $d$ of $F$ over ${\mathbb F}_q$. A divisor $\D$ is a formal sum $\D=\sum_i n_{i} P_i$, where $P_i$ are places and $n_i$ are relative integers. The support $supp~\D$ of $\D$ is the set of the places $P_j$ for which $n_{j}\neq 0$, and $\D$ is effective if all the $n_i$ are positive. The degree of $\D$ is defined by $\deg \D=\sum_i n_{i}$. The Riemann-Roch space associated to the divisor $\D$ is denoted by  ${\mathcal L}(\D)$. A divisor $\D$ is said to be non-special if $\dim\mathcal{L}(\D)=\deg(\D)+1-g$. Details about algebraic function fields can be found in \cite{stic2}. 

Since Ballet and Rolland \cite{baro1}, Arnaud \cite{arna1}, then Cenk and Özbudak \cite{ceoz} and finally the best current generalization due to Randriambololona \cite{randJComp}, the algorithm has been extended to the evaluation at places of arbitrary degrees and with multiplicity greater than 1. We recall that the generalized evaluation map is defined by the following:

\begin{definition}\label{genmap}
For any divisor $\D$, $P$ a place of degree $d$ and the multiplicity $u\geq1$ an integer, we define the generalized evaluation map 
\begin{equation}
\label{gen_eval}
\varphi_{\D,P,u}:\left|\begin{array}{ccl}
\Ld{} & \longrightarrow & (\F_{q^d})^u\\
f & \mapsto & (f(P),f'(P),\dots,f^{(u-1)}(P))
\end{array}\right.
\end{equation}
where the $f^{(k)}(P)$ are the coefficients of the local expansion
\begin{equation}\label{localexp}
f=f(P)+f'(P)t_P+f''(P)t_P^2+\cdots+f^{(k)}(P)t_P^k+\cdots
\end{equation}
of $f$ at $P$ with respect to the local parameter $t_P$, i.e. in $\mathbb{F}_{q^d}[[t_P]]$. 
\end{definition}

This map is also called a ``derivative evaluation map at order $u$''. In particular, the notation $f(P)$ denotes the residue of $f$ in $F_P$, that we often call the evaluation at P. Now, we define the generalized Hadamard product.

\begin{definition}
Let $q$ be a prime power and $d_1,\ldots,d_N,u_1,\ldots,u_N$ be positive integers. The generalized Hadamard product in $(\mathbb F_{q^{d_1}})^{u_1} \times \dots \times (\mathbb F_{q^{d_N}})^{u_N}$, denoted by $\underline{\odot}$, is given for all $(a_1,\ldots,a_N),(b_1,\ldots,b_N)\in(\mathbb F_{q^{d_1}})^{u_1} \times \dots \times (\mathbb F_{q^{d_N}})^{u_N}$ by
$$(a_1,\ldots,a_N)\underline{\odot}(b_1,\ldots,b_N)=(a_1b_1,\ldots,a_Nb_N).$$

\end{definition}

In the following, each product $a_ib_i$ in $(\mathbb{F}_{q^{d_i}})^{u_i}$ is the truncated product of two local expansions at a place $P$: the $u_i$ first elements of the product of two elements of the form of (\ref{localexp}) in $\mathbb{F}_{q^{d_i}}[[t_P]]$, i.e. the product in ${\mathbb{F}_{q^d}[[t_P]]}/{(t_P^{u_i})}$. Following the notation introduced in \cite{randJComp}, we denote by $\mu_q(d_i,u_i)$ the bilinear complexity of such truncated product. 
Now, let us introduce a specialized version of the current generalization of CCMA.

\begin{theorem}[CCMA at places of arbitrary degrees with derivative evaluations]\label{Algogene}

Let \begin{itemize}
      \item $n$ be a positive integer,
      \item $F /\F_{q}$ be an algebraic function field of genus $g$,
      \item $Q$ be a degree $n$ place of $F /\F_{q}$,
      \item $\D$ be a divisor of $F /\F_{q}$,
      \item ${\mathcal P}=\{P_1,\ldots , P_N\}$ be an ordered set of places of arbitrary degrees of $F /\F_{q}$,
      \item $\underline{u}=(u_1,\ldots,u_N)$ be positive integers.
   \end{itemize}

We suppose that $supp\;\D \cap \{Q,P_1,...,P_N\}=\emptyset$ and that
\begin{enumerate}
\item[(i)] the evaluation map
$$\begin{array}{lccc}
 Ev_Q: & \mathcal L(\D) & \rightarrow &   F_Q \\
    &  f              & \mapsto     & f(Q)
\end{array}$$
is surjective,
\item[(ii)] the evaluation map
$$\begin{array}{lccc}
 Ev_{\mathcal P}: & \mathcal L(2\D) & \rightarrow & (\mathbb F_{q^{\deg P_1}})^{u_1} \times \dots \times (\mathbb F_{q^{\deg P_N}})^{u_N}  \\
    &  f              & \mapsto     & \left(\strut \varphi_{2\D,P_1,u_1}\left(f\right),\ldots,\varphi_{2\D,P_N,u_N}\left(f\right)\right)
\end{array}$$
is injective.
\end{enumerate}
Then,
\begin{itemize}
\item [(1)]we have a multiplication algorithm $\mathcal{U}_{q,n}^{F,\mathcal{P},\underline{u}}(\D,Q)$ such that for any two elements $x$, $y$ in $\mathbb F_{q^n}$:

\begin{equation}\label{directproductalgoChud}
xy= E_Q \circ Ev_{\mathcal P}{|_{Im Ev_{\mathcal P}}}^{-1} \left( E_{\mathcal P}\circ Ev_Q^{-1}(x) \underline{\odot} E_{\mathcal P}\circ Ev_Q^{-1}(y)  \right),
\end{equation}
where $E_Q$ denotes the canonical projection from the valuation ring ${\mathcal O}_Q$ of the place $Q$ in its residue class field $F_Q$, $E_{\mathcal P}$ the extension of $Ev_{\mathcal P}$ on the valuation ring ${\mathcal O}_Q$ of the place $Q$, $Ev_{\mathcal P}{|_{Im Ev_{\mathcal P}}}^{-1}$ the restriction of the inverse map of $Ev_{\mathcal P}$ on its image, $\underline{\odot}$ the generalized Hadamard product and $\circ$ the standard composition map; 
\item [(2)] the algorithm $\mathcal{U}_{q,n}^{F,\mathcal{P},\underline{u}}(\D,Q)$ defined by \eqref{directproductalgoChud} has bilinear complexity $$\mu(\mathcal{U}_{q,n}^{F,\mathcal{P},\underline{u}}(\D,Q))= \sum_{i=1}^N \mu_q(\deg P_i,u_i).$$
\end{itemize}
\end{theorem}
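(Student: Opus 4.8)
The plan is to unwind the definition \eqref{directproductalgoChud} one map at a time, checking well-definedness and correctness, and then to count the bilinear multiplications. First I would fix the $\F_q$-algebra isomorphism $\F_{q^n}\cong F_Q$ provided by the residue map $E_Q$ at the degree $n$ place $Q$, reducing the problem to multiplying residues at $Q$. Since $supp\,\D$ is disjoint from $\{Q,P_1,\dots,P_N\}$, every function of $\Ld{}$ and of $\Ld{2}$ lies in $\cO_Q$ and in each $\cO_{P_i}$, hence admits a residue at $Q$ and a local expansion \eqref{localexp} at each $P_i$; this is what makes the extensions $E_Q$ of $Ev_Q$ and $E_{\mathcal P}$ of $Ev_{\mathcal P}$ defined on these Riemann--Roch spaces. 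Hypothesis (i) then gives an $\F_q$-linear section $Ev_Q^{-1}$ of $Ev_Q$ with values in $\Ld{}$, so that $f:=Ev_Q^{-1}(x)$ and $g:=Ev_Q^{-1}(y)$ are functions of $\Ld{}$ with $f(Q)=x$ and $g(Q)=y$.

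Next I would invoke the two multiplicativity facts behind every Chudnovsky-type algorithm. First, for $f,g\in\Ld{}$ one has $\mathrm{div}(fg)=\mathrm{div}(f)+\mathrm{div}(g)\geq-2\D$, so $fg\in\Ld{2}$. Second, the residue map at $Q$ and the truncated local expansions at the $P_i$ are ring homomorphisms on the relevant valuation rings: $E_Q(fg)=f(Q)g(Q)=xy$, and $E_{\mathcal P}(fg)=E_{\mathcal P}(f)\,\underline{\odot}\,E_{\mathcal P}(g)$, where the $i$-th component of the generalized Hadamard product is exactly the product of the two local expansions of $f$ and $g$ taken modulo $t_{P_i}^{u_i}$. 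Because $fg\in\Ld{2}$ and, by hypothesis (ii), $Ev_{\mathcal P}$ is injective on $\Ld{2}$, the element $E_{\mathcal P}(f)\,\underline{\odot}\,E_{\mathcal P}(g)=Ev_{\mathcal P}(fg)$ lies in $\mathrm{Im}\,Ev_{\mathcal P}$ and $Ev_{\mathcal P}|_{\mathrm{Im}\,Ev_{\mathcal P}}^{-1}$ returns $fg$; applying $E_Q$ then gives $xy$. This proves assertion (1) (and incidentally shows the output does not depend on the chosen section $Ev_Q^{-1}$).

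For assertion (2), I would note that in \eqref{directproductalgoChud} the maps $Ev_Q^{-1}$, $E_{\mathcal P}$, $Ev_{\mathcal P}|_{\mathrm{Im}\,Ev_{\mathcal P}}^{-1}$ and $E_Q$ are all $\F_q$-linear, hence involve only scalar multiplications and additions; the only step where coordinates extracted from $x$ get multiplied by coordinates extracted from $y$ is the generalized Hadamard product $\underline{\odot}$. That product decomposes into $N$ independent truncated multiplications, the $i$-th being a product in $\F_{q^{\deg P_i}}[[t_{P_i}]]/(t_{P_i}^{u_i})$, which by definition of $\mu_q(\deg P_i,u_i)$ is realized with $\mu_q(\deg P_i,u_i)$ bilinear multiplications; since these $N$ subproducts act on disjoint sets of variables, the algorithm $\mathcal U_{q,n}^{F,\mathcal P,\underline u}(\D,Q)$ uses $\sum_{i=1}^N\mu_q(\deg P_i,u_i)$ bilinear multiplications in total, which is (2). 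The main point requiring care is the multiplicativity claim of the second paragraph, namely that $\varphi_{2\D,P_i,u_i}$ commutes with products up to truncation at order $u_i$; establishing this rigorously requires the formalism of local expansions in $\F_{q^{d}}[[t_{P}]]$ (as developed in \cite{randJComp}), whereas the surjectivity and injectivity hypotheses only take care of the linear-algebra part of the argument.
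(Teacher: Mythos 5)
Your proof is correct and follows the standard argument (the one the paper itself only recalls from the literature and reproduces in the analogous proof of Proposition~\ref{algoavecPinfty}): multiplicativity of the residue and truncated local-expansion maps turns the generalized Hadamard product of evaluation vectors into $Ev_{\mathcal P}(fg)$, injectivity of $Ev_{\mathcal P}$ on $\mathcal L(2\D)$ recovers $fg$, and the bilinear multiplications are exactly the $N$ truncated products, giving $\sum_{i=1}^N \mu_q(\deg P_i,u_i)$. No substantive difference from the intended proof, so nothing further to compare.
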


\noindent Moreover, recall that sufficient application conditions are given in \cite{survey}:

\begin{theorem}\label{criteres}
Existence of the objects satisfying the conditions of Theorem \ref{Algogene} above is ensured by the following numerical criteria:
\begin{itemize}
    \item [(a)] a sufficient condition for the existence of a place $Q$ in $F/\mathbb{F}_q$ of degree $n$ is that $2g+1\leq q^{(n-1)/2}(q^{1/2}-1)$, where $g$ is the genus of $F$,
    \item [(b)] a sufficient condition for $(i)$  is that the divisor $D-Q$ is non-special,
    \item [(c)] a necessary and sufficient condition for $(ii)$ is that the divisor $2\D-\mathcal G$ is zero-dimensional:
    $$\dim \mathcal L(2\D-\mathcal G) = 0$$
    where $\mathcal{G}=u_1P_1+\cdots +u_NP_N$.
\end{itemize}
\end{theorem}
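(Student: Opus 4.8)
The plan is to prove Theorem~\ref{criteres} by translating each of the three conditions of Theorem~\ref{Algogene} into a statement about dimensions of Riemann--Roch spaces, and then invoking standard facts from function field theory (as in \cite{stic2}).

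First, for part~(a): the existence of a place $Q$ of degree exactly $n$ is a counting problem. I would recall the explicit version of the Hasse--Weil bound for the number $B_n(F/\F_q)$ of places of degree $n$, namely that $B_n$ is positive whenever $2g \le q^{(n-1)/2}(q^{1/2}-1) - 1$ (this is the classical estimate, see \cite{stic2}, that follows from the functional equation of the zeta function and the Weil bound $|N_r - (q^r+1)| \le 2g q^{r/2}$ on the number of $\F_{q^r}$-rational points of the curve). So I would state this bound, cite it, and observe that it gives the claimed sufficient condition after rearranging.

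For part~(b): I need surjectivity of $Ev_Q : \mathcal{L}(\D) \to F_Q \cong \F_{q^n}$. Since $Q$ has degree $n$, the kernel of $Ev_Q$ is exactly $\mathcal{L}(\D - Q)$, and the codomain has dimension $n$ over $\F_q$. Hence $Ev_Q$ is surjective if and only if $\dim \mathcal{L}(\D) - \dim \mathcal{L}(\D - Q) = n = \deg Q$. Now if $\D - Q$ is non-special, then by definition $\dim \mathcal{L}(\D-Q) = \deg(\D-Q) + 1 - g = \deg \D - n + 1 - g$; and since $\D \ge \D - Q$ in the partial order and a divisor dominating a non-special divisor is itself non-special (a standard consequence of Riemann--Roch and the fact that the index of speciality is non-increasing under adding effective divisors), $\D$ is also non-special, so $\dim \mathcal{L}(\D) = \deg \D + 1 - g$. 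Subtracting gives exactly $n$, so $Ev_Q$ is surjective. I would write this chain of equalities out.

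For part~(c): injectivity of $Ev_{\mathcal{P}}$ on $\mathcal{L}(2\D)$. The key observation is that the kernel of $\varphi_{2\D, P_i, u_i}$ consists of those $f \in \mathcal{L}(2\D)$ whose local expansion at $P_i$ vanishes to order at least $u_i$, i.e. $v_{P_i}(f) \ge u_i$; so the kernel of the whole map $Ev_{\mathcal{P}}$ is precisely $\mathcal{L}(2\D - \mathcal{G})$ with $\mathcal{G} = u_1 P_1 + \cdots + u_N P_N$ (here the disjointness of $supp\,\D$ from the $P_i$ guarantees $2\D - \mathcal{G}$ makes sense without cancellation issues and that the local parameters behave well). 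Thus $Ev_{\mathcal{P}}$ is injective if and only if $\mathcal{L}(2\D - \mathcal{G}) = \{0\}$, i.e. $\dim \mathcal{L}(2\D - \mathcal{G}) = 0$, which is the stated criterion; this is a genuine equivalence, not merely a sufficient condition, hence the wording. I expect the main subtlety to be the bookkeeping in part~(c) — verifying carefully that the order of vanishing at $P_i$ detected by $\varphi_{2\D,P_i,u_i}$ is exactly $u_i$ and that these conditions at distinct places combine additively into the single divisor $\mathcal{G}$ — together with citing the correct non-special-divisor lemma in part~(b); the counting bound in part~(a) is then just a matter of quoting the explicit Weil estimate.
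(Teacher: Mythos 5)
Your proposal is correct, and it is the standard argument: part (a) is Stichtenoth's explicit Weil-type estimate, part (b) is the kernel-of-$Ev_Q$ computation combined with Riemann--Roch via non-speciality of $\D-Q$ (and of $\D$, which dominates it), and part (c) is the identification $\ker Ev_{\mathcal P}=\mathcal{L}(2\D-\mathcal{G})$, which indeed gives an equivalence. The paper does not reproduce a proof of this theorem but recalls it from the cited survey, and your argument coincides with that standard proof, so there is nothing to flag beyond noting that the disjointness hypothesis $supp\,\D\cap\{Q,P_1,\ldots,P_N\}=\emptyset$ is exactly what you use to identify the kernels in (b) and (c).
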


These results include the algorithm without derivative evaluations, setting $u_i=1$. The following corollary provides some sufficient conditions particularly useful for what follows.

\begin{corollary}[Criteria for CCMA at places of arbitrary degrees without derivative evaluation]\label{Algodegresupnoderiv}

Let $q$ be a prime power and let $n$ be an integer $>1$. 
If there exists an algebraic function field $F/\F_q$ of genus $g$ with a set of places $\mathcal{P}=\{P_1,\ldots,P_N\}$ and an effective divisor $\D$ of degree $n+g-1$ such that

\begin{enumerate}
        \item[1)] there exists a place $Q$ of degree $n$
         (which is always the case if $2g+1 \leq q^{\frac{n-1}{2}}(q^{\frac{1}{2}}-1)$), 
        \item[2)] $Supp~\D\cap(\mathcal{P}\cup Q)=\emptyset$, and $D-Q$ is non-special,
        \item[3)] $\sum_{i=1}^N\deg P_i = 2n+g-1$ and $2\D-\sum P_i$ is non-special,
\end{enumerate}

then,

\begin{enumerate}
\item[(i)] the evaluation map
$$\begin{array}{lccc}
 Ev_Q: & \mathcal L(\D) & \rightarrow & \frac {\mathcal O_Q}{Q}  \\
    &  f              & \mapsto     & f(Q)
\end{array}$$
is an isomorphism of vector spaces over $\F_q$,
\item[(ii)] and the evaluation map
$$\begin{array}{lccl}
 Ev_{\mathcal P}: & \mathcal L(2\D) & \rightarrow & \mathbb \mathbb F_{q^{\deg P_1}} \times \dots \times \mathbb F_{q^{\deg P_N}} \\
    &  f              & \mapsto     & \left(\strut f\left(P_1\right),\ldots,f\left(P_{N}\right)\right)
\end{array}$$
is an isomorphism of vector spaces of dimension $2n+g-1$ over $\F_q$.
\end{enumerate}
\end{corollary}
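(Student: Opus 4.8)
The plan is to deduce the whole statement from the Riemann--Roch theorem by a pure dimension count, the arithmetic input being supplied by hypotheses 1)--3); the only external ingredient is criterion (a) of Theorem~\ref{criteres}, which I would quote to justify the parenthetical remark in 1). Note that the conclusions are in fact slightly stronger than conditions (i) and (ii) of Theorem~\ref{Algogene} (surjectivity and injectivity are promoted to isomorphisms), so the corollary genuinely gives \emph{sufficient} conditions for applying the algorithm.

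First I would record the one structural fact used twice: non-speciality is preserved under adding an effective divisor. Indeed, if $A$ is non-special and $A'\geq A$, then for a canonical divisor $W$ one has $W-A'\leq W-A$, hence $\mathcal{L}(W-A')\subseteq\mathcal{L}(W-A)$; since $A$ non-special means $\mathcal{L}(W-A)=0$, it follows that $\mathcal{L}(W-A')=0$, i.e.\ $A'$ is non-special. I would apply this with $A=\D-Q\leq\D$ and with $A=2\D-\sum_i P_i\leq 2\D$ (both differences effective, their supports disjoint from that of $\D$ by 2) and 3)), obtaining that $\D$ and $2\D$ are themselves non-special.

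Next I would treat $Ev_Q$. Its kernel is $\mathcal{L}(\D-Q)$, and by hypothesis $\D-Q$ is non-special of degree $(n+g-1)-n=g-1$, so Riemann--Roch gives $\dim_{\F_q}\mathcal{L}(\D-Q)=(g-1)+1-g=0$; hence $Ev_Q$ is injective, and as an evaluation it is $\F_q$-linear. On the other hand $\D$ is non-special of degree $n+g-1$, so $\dim_{\F_q}\mathcal{L}(\D)=n=\deg Q=\dim_{\F_q}(\mathcal{O}_Q/Q)$. An injective $\F_q$-linear map between $\F_q$-spaces of equal finite dimension is bijective, which proves (i). The argument for $Ev_{\mathcal P}$ is identical with $u_i=1$ (so $\mathcal G=\sum_i P_i$): its kernel on $\mathcal{L}(2\D)$ is $\mathcal{L}(2\D-\sum_i P_i)$, non-special of degree $2(n+g-1)-(2n+g-1)=g-1$ and therefore zero-dimensional, so $Ev_{\mathcal P}$ is injective; moreover $\dim_{\F_q}\mathcal{L}(2\D)=2(n+g-1)+1-g=2n+g-1=\sum_i\deg P_i=\dim_{\F_q}\bigl(\F_{q^{\deg P_1}}\times\cdots\times\F_{q^{\deg P_N}}\bigr)$, so injectivity upgrades to an isomorphism, giving (ii).

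I do not expect any serious obstacle: the proof is essentially the stability lemma above together with degree bookkeeping. The only points needing care are checking that the kernel divisors really have degree $g-1$ — which is exactly why 2) and 3) pin down $\deg\D=n+g-1$ and $\sum_i\deg P_i=2n+g-1$ — and confirming that the evaluation maps are well defined, i.e.\ that $Q$ and the $P_i$ avoid the support of $\D$ so that no function in $\mathcal{L}(\D)$ or $\mathcal{L}(2\D)$ has a pole there; this is guaranteed by the disjointness assumption in 2) and 3).
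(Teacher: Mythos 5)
Your proof is correct. The degree bookkeeping is right ($\deg(\D-Q)=g-1$ and $\deg(2\D-\sum_i P_i)=g-1$, both non-special hence zero-dimensional, so the kernels vanish), the identification of the kernels with $\mathcal{L}(\D-Q)$ and $\mathcal{L}(2\D-\sum_i P_i)$ is legitimate because hypotheses 2) and 3) keep $Q$ and the $P_i$ out of $supp\,\D$, and your stability lemma (non-speciality is inherited by larger divisors) correctly pins down $\dim\mathcal{L}(\D)=n$ and $\dim\mathcal{L}(2\D)=2n+g-1$ so that injectivity upgrades to the claimed isomorphisms. The paper itself does not write this out: it simply observes that conditions 1), 2), 3) give criteria (a), (b), (c) of Theorem~\ref{criteres} and leaves the dimension count implicit, whereas you re-derive those criteria directly from Riemann--Roch; the mathematics is the same, but your version is self-contained and makes explicit the step (exact dimensions of $\mathcal{L}(\D)$ and $\mathcal{L}(2\D)$) needed to pass from surjective/injective in Theorem~\ref{Algogene} to the isomorphism statements of the corollary. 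As a minor remark, the stability lemma is not strictly needed for (i): injectivity gives $\dim\mathcal{L}(\D)\leq n$ and Riemann's inequality gives $\dim\mathcal{L}(\D)\geq n$, and similarly for (ii); but using it is perfectly fine.
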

Conditions $1)$, $2)$ and $3)$ of Corollary \ref{Algodegresupnoderiv} gives the conditions $a)$, $b)$ and $c)$ of Theorem \ref{criteres} respectively. Note that all these new requirements are not necessary to construct the algorithm. Nevertheless, it corresponds to interesting conditions under which we want to build our algorithms.
In particular, if we use only interpolation on rational places, we obtain the criteria for the original CCMA \cite{ball1}.
\begin{corollary}[Criteria for the original CCMA]
\label{originalchud}
Let $q$ be a prime power and let $n$ be an integer $>1$. 
If there exists an algebraic function field $F/\F_q$ of genus $g$ satisfying the conditions 

\begin{enumerate}
         \item  $B_n(F/\F_q)>0$
         (which is always the case if $2g+1 \leq q^{\frac{n-1}{2}}(q^{\frac{1}{2}}-1)$),
	\item $B_1(F/\F_q) > 2n+2g-2$, 
\end{enumerate}

then there exists a divisor $D$ of degree $n+g-1$, a place $Q$ of degree $n$ and a set of rational places $\mathcal{P}$ such that $(i)$ and $(ii)$ of Corollary \ref{Algodegresupnoderiv} holds. 
\end{corollary}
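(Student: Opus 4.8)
The plan is to build the triple $(D,Q,\mathcal P)$ by picking rational places one at a time, each new place being forced to drop the dimension of a suitable Riemann--Roch space, so that the two degree-$(g-1)$ divisors $D-Q$ and $2D-\sum_iP_i$ come out zero-dimensional, hence non-special. First, hypothesis~1 (or its sufficient numerical form, which is Theorem~\ref{criteres}(a)) provides a place $Q$ of degree $n$. Next I would produce a non-special divisor $A_0$ of degree $g-1$: if $g=0$ any divisor of degree $-1$ does, while for $g\geq 1$ I start from $E=R_1+\dots+R_{2g-1}$, a sum of $2g-1$ distinct rational places --- available since hypothesis~2 gives $B_1(F/\F_q)>2n+2g-2\geq 2g>2g-1$ --- which is non-special with $\dim\mathcal L(E)=g$, and I then peel off $g$ rational places, one at a time, requiring the dimension to drop by $1$ at each step. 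The elementary fact in play is that, for a divisor $M$ with $\dim\mathcal L(M)\geq 1$, the rational places $P$ with $\dim\mathcal L(M-P)=\dim\mathcal L(M)$ are precisely those in the support of the base divisor of $|M|$, which has degree at most $\deg M$; so at the step removing the $(j+1)$-st place one must avoid at most $\deg M_j+j=\deg E=2g-1$ rational places (the base-divisor support, of degree $\leq\deg M_j$, plus the $j$ already removed), where $M_j$ is the divisor left after $j$ removals, and $B_1>2g-1$ leaves room. After $g$ steps, $\dim\mathcal L(A_0)=0$.

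I would then take $D$ to be \emph{any} divisor in the class of $A_0+Q$ whose support avoids $Q$ and all (finitely many) rational places of $F$; such a representative exists by the approximation theorem. Then $\deg D=n+g-1$ and $D-Q\sim A_0$ is non-special, so $\dim\mathcal L(D-Q)=0$; feeding this into the exact sequence $0\to\mathcal L(D-Q)\to\mathcal L(D)\xrightarrow{Ev_Q}F_Q$ together with Riemann's inequality $\dim\mathcal L(D)\geq n$ forces $\dim\mathcal L(D)=n$ and makes $Ev_Q$ an isomorphism, which is $(i)$ of Corollary~\ref{Algodegresupnoderiv}. Note I do \emph{not} require $D$ to be effective: for $g\geq 1$ and $n\leq g$ the complete linear system $|A_0+Q|$ may have rational base points, forcing every effective representative to contain those places and breaking the count in the next step, whereas a non-effective representative can be moved off any prescribed finite set of places at will.

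For $(ii)$, note $\deg 2D=2n+2g-2\geq 2g-1$, so $2D$ is non-special and $\dim\mathcal L(2D)=2n+g-1=:N$. I would construct $\mathcal P=\{P_1,\dots,P_N\}$ by the same peeling, applied to $2D$: at the $(j+1)$-st step the divisor $2D-P_1-\dots-P_j$ has $\dim\mathcal L=N-j\geq 1$, and the places to avoid (the base-divisor support, of degree $\leq\deg 2D-j$, plus the $j$ already chosen) number at most $\deg 2D=2n+2g-2$, so $B_1>2n+2g-2$ again leaves room; since $\mathrm{supp}(D)$ contains no rational place, the chosen $P_i$ are automatically disjoint from $\mathrm{supp}(D)\cup\{Q\}$. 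After $N$ steps $\dim\mathcal L\!\left(2D-\sum_iP_i\right)=0$, so $\ker Ev_{\mathcal P}=\mathcal L\!\left(2D-\sum_iP_i\right)=0$; as the source and target of $Ev_{\mathcal P}$ both have dimension $N$ over $\F_q$, it is an isomorphism, which is $(ii)$ of Corollary~\ref{Algodegresupnoderiv}.

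The main point to get right is the uniformity of this bookkeeping: one must check that the number of rational places one is forced to avoid stays \emph{constant} along each peeling process --- equal to $\deg E=2g-1$ when producing $A_0$ and to $\deg 2D=2n+2g-2$ when producing $\mathcal P$ --- so that hypothesis~2 is exactly the bound needed and nothing is wasted. The one genuinely delicate choice is arranging $\mathrm{supp}(D)$ to miss every rational place while keeping $D-Q$ non-special, which is why it matters that $D$ need not be effective (so the approximation argument meets no base-locus obstruction); everything else is Riemann--Roch plus the exact sequence for evaluation.
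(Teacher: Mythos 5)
Your argument is correct. The paper itself gives no proof of this corollary (it is imported from Ballet's earlier work via Corollary~\ref{Algodegresupnoderiv} and Theorem~\ref{criteres}), and your derivation is essentially the standard one: use the base-point count (at most $\deg M$ rational places fail to drop $\dim\mathcal L(M)$) to peel down to a zero-dimensional divisor of degree $g-1$, move $D$ inside its class by weak approximation so that its support avoids $Q$ and the rational places, get $(i)$ from $\dim\mathcal L(D-Q)=0$ plus the Riemann bound, and choose the $P_i$ successively so that $\dim\mathcal L\bigl(2D-\sum_i P_i\bigr)=0$, where $B_1>2n+2g-2$ is exactly the room needed at each step. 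The only deviation worth noting is that your $D$ need not be effective; this is harmless, since the conclusion of the corollary only asserts that $(i)$ and $(ii)$ of Corollary~\ref{Algodegresupnoderiv} hold, and your bookkeeping (base locus of degree at most $\deg 2D$ plus the places already chosen) is exactly tight against hypothesis~2.
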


\noindent In the following, we specialize these results to the rational function field $\mathbb{F}_q(x)$.

\section{CCMA and the multiplication in small extensions of $\mathbb{F}_q$}\label{smallext}

\subsection{Polynomial interpolation over rational points}\label{polyinter}

As seen in the introduction, the multiplication in any extension of $\mathbb{F}_q$ of degree $n\leq\frac{1}{2}q+1$ requires exactly $2n-1$ bilinear multiplications \cite{groo}, and every algorithm reaching this optimal bilinear complexity is of type interpolation \cite{wino3}. 
In this section, we construct Chudnovsky-type algorithms over the projective line using polynomial interpolation, and which have optimal bilinear complexity for $q$ a prime power and $n\leq\frac{1}{2}q+1$. We begin with the following set up. 

\medskip
\begin{center}
 \rule{\linewidth}{1pt}
 \end{center}
\textbf{PGC : Polynomial Generic Construction}

\noindent For $q$ a prime power and $n<\frac{1}{2}q+1$ a positive integer. We set
\begin{itemize}
    \item $Q$ is a place of degree $n$ of $\mathbb{F}_q(x)$,
    \item $\D=(n-1)P_\infty$,
    \item $\mathcal{P}$ is a set of rational places distinct from $P_\infty$ of cardinal $|\mathcal{P}|=2n-1$,
    \item the basis of $\mathcal{L}(\D)$ is $\{1,x,\ldots,x^{n-1}\}$, and 
    \item the basis of $\mathcal{L}(2\D)$ is $\{1,x,\ldots,x^{2n-1}\}$.
\end{itemize}
\begin{center}
 \rule{\linewidth}{1pt}
 \end{center}
 
In our construction, we set the function field to be $\mathbb{F}_q(x)$, and the divisor to be $\D=(n-1)P_\infty$. In order to define an algorithm for the multiplication in $\mathbb{F}_{q^n}$ with Theorem \ref{Algogene}, the only variables left  are the place $Q$, the set $\mathcal{P}$ and the integers in $\underline{u}$. Hence, we denote the algorithm using these parameters by $\mathcal{U}_{q,n}^{\mathcal{P},\underline{u}}(Q)$. When we do not evaluate with multiplicity, i.e. all $u_i$ are equal to 1, we denote the algorithm by $\mathcal{U}_{q,n}^{\mathcal{P}}(Q)$ to lighten the notations.

\begin{proposition}
Let $q$ be a prime power, $n<\frac{1}{2}q+1$ be an integer and $\mathcal{P}$ be a set of rational places distinct from $P_\infty$ of cardinal $|\mathcal{P}|=2n-1$. Then, PGC is a set-up for a CCMA from Corollary \ref{originalchud}, denoted by $\mathcal{U}_{q,n}^\mathcal{P}(Q)$, for the multiplication in $\mathbb{F}_{q^n}$. This algorithm  interpolates over polynomials and computes $2n-1$ bilinear multiplications in $\mathbb{F}_q$.
\end{proposition}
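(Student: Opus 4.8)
The plan is to verify that the data fixed by PGC meet the hypotheses of Corollary~\ref{originalchud} for the rational function field $F=\mathbb{F}_q(x)$, which has genus $g=0$, and then to read the bilinear complexity off part~(2) of Theorem~\ref{Algogene}. The two basic facts I would use about $\mathbb{F}_q(x)$ are that it has exactly $q+1$ rational places, namely $P_\infty$ and the places $P_a$ for $a\in\mathbb{F}_q$, and that for every $m\geq 0$ one has $\mathcal{L}(mP_\infty)=\{p\in\mathbb{F}_q[x] : \deg p\leq m\}$ with basis $\{1,x,\dots,x^m\}$: indeed each $x^i$ has a single pole at $P_\infty$, of order $i$, so $x^i\in\mathcal{L}(mP_\infty)$ for $i\leq m$; these $m+1$ functions are $\mathbb{F}_q$-linearly independent; and $\dim\mathcal{L}(mP_\infty)=m+1$ by Riemann--Roch. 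In particular the monomials up to degree $\deg\D=n-1$ form a basis of $\mathcal{L}(\D)$ and those up to degree $\deg 2\D=2n-2$ form a basis of $\mathcal{L}(2\D)$, of dimensions $n$ and $2n-1$.

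First I would dispatch the numerical conditions of Corollary~\ref{originalchud}. There is always a place $Q$ of degree $n$ in $\mathbb{F}_q(x)$, since monic irreducible polynomials of degree $n$ over $\mathbb{F}_q$ exist for every $n\geq1$ (equivalently, the criterion $2g+1\leq q^{(n-1)/2}(q^{1/2}-1)$ of Theorem~\ref{criteres}(a) here reads $1\leq q^{(n-1)/2}(q^{1/2}-1)$). The hypothesis $n<\tfrac12 q+1$ gives $2n\leq q+1$, hence $2n-1\leq q$; so there are enough finite rational places to choose $\mathcal{P}$ with $|\mathcal{P}|=2n-1$ and $P_\infty\notin\mathcal{P}$, and moreover $B_1(\mathbb{F}_q(x)/\mathbb{F}_q)=q+1\geq 2n>2n-2=2n+2g-2$, which is the second requirement of Corollary~\ref{originalchud}.

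Then I would check the geometric conditions directly, in the form of Corollary~\ref{Algodegresupnoderiv}. The divisor $\D=(n-1)P_\infty$ is effective of degree $n-1=n+g-1$; its support $\{P_\infty\}$ is disjoint from $\mathcal{P}$ by construction and from $Q$ because $\deg Q=n>1=\deg P_\infty$. The divisor $\D-Q$ has degree $-1<0$, hence is non-special (a divisor of negative degree has zero-dimensional Riemann--Roch space, and $0=\deg+1-g$). Since the places of $\mathcal{P}$ are rational, $\sum_{i}\deg P_i=2n-1=2n+g-1$, and $2\D-\sum_{i}P_i$ has degree $2(n-1)-(2n-1)=-1<0$, so it is non-special as well. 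Thus Corollary~\ref{Algodegresupnoderiv} applies: $Ev_Q\colon\mathcal{L}(\D)\to\mathcal{O}_Q/Q$ and $Ev_{\mathcal{P}}\colon\mathcal{L}(2\D)\to\prod_i\mathbb{F}_{q^{\deg P_i}}$ are isomorphisms, which is exactly conditions (i) (surjectivity) and (ii) (injectivity) of Theorem~\ref{Algogene} with all $u_i=1$.

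Finally, Theorem~\ref{Algogene}(1) yields the multiplication algorithm $\mathcal{U}_{q,n}^{\mathcal{P}}(Q)$, and Theorem~\ref{Algogene}(2) gives $\mu\bigl(\mathcal{U}_{q,n}^{\mathcal{P}}(Q)\bigr)=\sum_{i=1}^{2n-1}\mu_q(1,1)=2n-1$, since $\mu_q(1,1)$ is the bilinear complexity of multiplication in $\mathbb{F}_q$, namely $1$. That the algorithm interpolates over polynomials is then immediate from the monomial bases above: through them $Ev_{\mathcal{P}}$ becomes evaluation of polynomials of degree $\leq 2n-2$ at the $2n-1$ points of $\mathcal{P}$ (Lagrange interpolation), while $Ev_Q^{-1}$ identifies $\mathbb{F}_{q^n}$ with $\mathbb{F}_q[x]/(\pi_Q)$ for $\pi_Q$ the monic irreducible polynomial of degree $n$ defining $Q$; so $\mathcal{U}_{q,n}^{\mathcal{P}}(Q)$ is the usual evaluate--multiply--interpolate scheme. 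The argument is entirely a verification; the only points needing a moment's care are extracting $2n-1\leq q$ from $n<\tfrac12 q+1$ (to guarantee enough rational places for $\mathcal{P}$) and the identification $\mathcal{L}(mP_\infty)=\{p:\deg p\leq m\}$ that makes the bases in PGC legitimate.
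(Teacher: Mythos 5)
Your proof is correct and follows essentially the same route as the paper: existence of the degree-$n$ place via a monic irreducible polynomial, the monomial bases of $\mathcal{L}(\D)$ and $\mathcal{L}(2\D)$ giving polynomial interpolation, and then — since $P_\infty$ lies in $\mathrm{supp}\,\D$ and only $q$ finite rational places remain — verification of Corollary \ref{Algodegresupnoderiv} through the negative-degree (hence non-special) divisors $\D-Q$ and $2\D-\sum P_i$, with the count $2n-1\leq q$ extracted from $n<\frac{1}{2}q+1$. The only difference is cosmetic: you make the bilinear-complexity count explicit via Theorem \ref{Algogene}(2) with $\mu_q(1,1)=1$, which the paper leaves implicit.
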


\begin{proof}
First, finding a degree $n$ place $Q$ to construct $\mathbb{F}_{q^n}$ means finding a monic irreducible polynomial $Q(x)$ of degree $n$ over $\mathbb{F}_q$. The residue class field at $Q$ is exactly the quotient $\mathbb{F}_q[x]/(Q(x))=\mathbb{F}_{q^n}$. Such polynomials exist for all $q$ and $n$, and the condition $1.$ of Corollary \ref{originalchud} is verified. 

Let $P_\infty$ be the place at infinity of $\mathbb{F}_q(x)$, and $\D$ be the divisor defining the Riemann-Roch space. We set $\D=(n-1)P_\infty$. Then, $\{1,x,\ldots,x^{n-1}\}$ is canonically a basis of $\mathcal{L}(\D)$, and $\{1,x,\ldots,x^{2n-2}\}$ is a basis of $\mathcal{L}(2\D)$, and the interpolation is done with polynomials. 

There are $q+1$ rational places in $\mathbb{F}_q(x)$. The condition $2.$ of Corollary \ref{originalchud} attests that the algorithm can be built for $n\leq\frac{1}{2}q+1.$
But in PGC, the place at infinity $P_\infty$ is used to define the divisor $\D$. It implies that we can evaluate only on $q$ rational places instead of $q+1$, since a function in $\mathcal{L}(\D)$ has a pole at $P_\infty$. Thus, we use the more general Corollary \ref{Algodegresupnoderiv}. The divisor $\D-Q$ is of degree $-1$ and hence is non-special (\cite{stic2}, Remark 1.6.11), thus condition $2)$ is verified. Moreover, let $\mathcal{P}=\{P_i\}$, be a set of rational places distinct from $P_\infty$. Condition $3)$ requires that the cardinal $\mid\mathcal{P}\mid =2n-1\leq q$, so $2\D-\sum P_i$ is of degree $-1$ and thus non-special. Hence, Corollary \ref{Algodegresupnoderiv} attests that the algorithm can be built with PGC for $n<\frac{1}{2}q+1.$

Therefore, for all prime powers $q$, PGC gives CCMA with optimal bilinear complexity using polynomial interpolation for the multiplication in an extension of $\mathbb{F}_q$ of degree $n<\frac{1}{2}q+1$.\end{proof}

\begin{remark}\label{boarderline}
CCMA cannot be constructed with PGC when $n=\frac{1}{2}q+1$.
\end{remark}

When $q$ is odd, this equality never happens, because $\frac{1}{2}q+1$ is not an integer. 
When $q$ is even, the same construction is not possible in the borderline case $n=\frac{1}{2}q+1$. For an even $q\geq4$, we can use a place $\mathcal R$ of degree $n-1$ to define the divisor, i.e. set $\D=\mathcal{R}$. With this setting, we can evaluate at $P_\infty$ and hence on the $q+1=2n-1$ rational places of $\mathbb{F}_q(x)$. Then, an algorithm of multiplication is constructed with optimal bilinear complexity. Nevertheless, the basis of the Riemann-Roch space $\mathcal{L}(2\mathcal{R})$  will be some linear combinations of $\mathcal{B}=\{\frac{1}{\mathcal R^2(x)},\frac{x}{\mathcal{R}^2(x)},\ldots,\frac{x^{2n-1}}{\mathcal R^2(x)}\}$, where $\mathcal{R}(x)$ is the monic irreducible polynomial of degree $n-1$ defining $\mathcal{R}$. In fact, if we denote by $v_\infty$ the valuation at $P_\infty$, we obtain $v_\infty(\frac{x^i}{\mathcal R^2(x)})=i-2(n-1)$. Then, two distinct elements of $\mathcal{B}$ have two different valuations at $P_\infty$, and $\mathcal{B}$ is a basis of $\mathcal{L}(2\mathcal{R})$. Hence, we obtain an algorithm of bilinear complexity $2n-1$, but that interpolates no longer with polynomials but with rational functions (this construction is illustrated in the forthcoming Example \ref{abusex}). In the next section, we see how to obtain a polynomial interpolation algorithm in this case.

\subsection{The case of $n=\frac{1}{2}q+1$ and polynomial interpolation}\label{alaK}

We consider the case of Remark \ref{boarderline}: the extension of $\mathbb F_q$ of degree $n=\frac{1}{2}q+1$. We want to build a Chudnovsky-type algorithm over the rational function field $\mathbb{F}_q(x) $, demanding $\D=(n-1)P_\infty$ to interpolate with polynomials. From the results of Winograd and De Groote, it must be possible to construct such an algorithm with optimal bilinear complexity. Hence, we want to find a way to get back the evaluation at $P_\infty$, which is not allowed in our construction. We use the fact that the leading coefficient of the product is the product of the leading coefficients. In fact, since $\D=(n-1)P_\infty$, functions in $\mathcal{L}(\D)$ are polynomials of degrees $n-1$, and the product of two of them belongs to $\mathcal{L}(2\D)$ and is of degree $2n-2$. Hence, we can canonically use $\{1,x,\ldots,x^{n-1}\}$ as a basis for $\mathcal L(\D)$, and $\{1,x,\ldots,x^{2n-2}\}$ as a basis of $\mathcal{L}(2\D)$. Let $f=\sum_{i=0}^{n-1}a_ix^i$ and $g=\sum_{i=0}^{n-1}b_ix^i$ be functions in $\mathcal{L}(\D)$, and their product be $fg=\sum_{i=0}^{2n-2}c_ix ^i\in\mathcal{L}(2\D)$. Its leading coefficient $c_{2n-2}$ is equal to $a_{n-1}b_{n-1}$. We define $P_0$ to be the place associated to the polynomial $x$. Then, $x$ is a local parameter for the expansion at the place $P_0$, and a function in $\mathcal L(\D)$ in the previous basis is its own Laurent expansion at $P_0$. Hence, we can interpret the product of the leading coefficients in terms of derivative evaluations at $P_0$:
\begin{equation}\label{trickgene} f^{(n-1)}(P_0)g^{(n-1)}(P_0)=(fg)^{(2n-2)}(P_0),\end{equation}
where $f^{(i)}$ is the $i-th$ coefficient of the Laurent expansion, as in Definition \ref{genmap}.
We use this trick to overcome the incapacity to evaluate at the place at infinity. Let us introduce the following notation.

\begin{definition}
\label{abus}
Let $k$ be a positive integer and $P_\infty$ be the place at infinity of $\mathbb{F}_q(x)$. Set $\mathcal{L}(\D)=\mathcal{L}(kP_\infty)$, we define the evaluation at $P_\infty$ to be for all $f\in\mathcal{L}(\D)$, $$f_{\D}(P_\infty):=f^{(k)}(P_0),$$ the $k+1-$th coefficient of the Laurent expansion at $P_0$, that is also the leading coefficient of $f$. We specify the divisor $\D$ in the notation as the evaluation depends on the Riemann-Roch space from which it is defined.
\end{definition}

\noindent Under these notations, the formula (\ref{trickgene}) becomes 
\begin{equation}\label{TRICK}
    f_{\D}(P_\infty)g_{\D}(P_\infty)=(fg)_{2\D}(P_\infty).
\end{equation}
In order to illustrate the legitimacy of this definition, we give an example of construction of an evaluation map $\Tilde{Ev_\mathcal{P}}$ using this trick, when the evaluations are done without multiplicity. 

\begin{example}\label{abusex}
Let $q\geq2$ be a prime power, $n\geq2$ be an integer, and $\mathbb{F}_q(x)$ be the rational function field. Let $\mathcal{P}=\{P_\infty,P_0,P_1,\ldots,P_N\}$ be a set of places of $\mathbb{F}_q(x)$, such that $\sum_{P\in\mathcal P}\deg P=2n-1$, where $P_0$ is the rational place associated to the polynomial $x$, and $P_\infty$ is the place at infinity. Set $\D=(n-1)P_\infty$. We consider the application
$$\begin{array}{lccc}
\Tilde{Ev_\mathcal P} & :\mathcal{L}(2\D) & \longrightarrow & \mathbb F_{q} \times \mathbb{F}_q \times \mathbb{F}_{q^{d_1}} \times \dots \times \mathbb F_{q^{d_N}} \\
& f &\mapsto&\left ( f_{2\D}(P_\infty),f(P_0),f(P_1),\ldots,f(P_{N})\right),\end{array}$$ 
where $f_{2\D}(P_\infty)=f^{(2n-2)}(P_0)$ is the leading coefficient of $f$. Suppose that $\mathcal{P}$ does not contain all places of degree $n-1$. Let $\mathcal{R}$ be a degree $n-1$ place of $\mathbb{F}_q(x)$ not included in $\mathcal{P}$, and $\mathcal{R}(x)$ is the corresponding monic irreducible polynomial of degree $n-1$ over $\mathbb{F}_q$. 
The Riemann-Roch spaces $\mathcal{L}(2\D)$ and $\mathcal{L}(2\mathcal{R})$ are of same dimension over $\mathbb{F}_q$ and isomorphic as vector spaces. The set $\mathcal{B}_{2\D}=\{x^i\}_{i=0,\ldots,2n-2}$ is a basis of $\mathcal{L}(2(n-1)P_\infty)$ and $\mathcal{B}_{2\mathcal{R}}=\{\frac{x^i}{\mathcal{R}(x)^2}\}_{i=0,\ldots,2n-2}$ is a basis of $\mathcal{L}(2\mathcal{R})$. The natural isomorphism between these two vector spaces is given by $\phi_{2\mathcal{R}}: x^i\mapsto x^i/\mathcal{R}(x)^2$, for all $x^i\in\mathcal{B}_{2\D}$.
Recalling that $\sum_{P\in\mathcal{P}}\deg P=2n-1$, the application 
\begin{equation}
\begin{array}{lccc}
 Ev_{2\mathcal{R}}: & \mathcal L(2\mathcal R) & \rightarrow & \mathbb F_{q} \times \mathbb{F}_q \times \mathbb{F}_{q^{d_1}} \times \dots \times \mathbb F_{q^{d_N}}  \\
    &  f              & \mapsto     & \left(\strut f(P_\infty),f(P_0),f\left(P_1\right),\ldots,f\left(P_{N}\right)\right),
\end{array}
\end{equation}
is injective because the divisor $(2\mathcal{R}-\sum_{P\in\mathcal{P}} P)$ is of negative degree, and bijective since the two vector spaces are of dimension $2n-1$. Note that this application corresponds to $Ev_{\mathcal{P}}$ in Theorem \ref{Algogene}, but we denote it here by $Ev_{2\mathcal R}$ to highlight its source. Now, consider
$$
\begin{array}{lccc}
  Ev_{2\mathcal{R}}\circ \phi_{2\mathcal{R}}: & \mathcal L(2\D) & \rightarrow & \mathbb F_{q} \times \mathbb{F}_q \times \mathbb{F}_{q^{d_1}} \times \dots \times \mathbb F_{q^{d_N}}\\
    &  f              & \mapsto     & \left(\strut \frac{f}{\mathcal R^2}(P_\infty),\frac{f}{\mathcal R^2}(P_0),\frac{f}{\mathcal R^2}(P_1),\ldots,\frac{f}{\mathcal R^2}(P_N)\right),
\end{array}
$$
where $\frac{f}{\mathcal{R}^2}$ denotes the rational function $\frac{f(x)}{\mathcal{R}(x)^2}\in \mathcal{L}(2\mathcal{R})$.
Let $f=\sum_{i=0}^{2n-2}a_ix^i$ be a function in $\mathcal{L}(2\D)$. We have
$\frac{f}{\mathcal{R}^2}(P_\infty)=a_{2n-2}=f_{2\D}(P_\infty)$, for all $f\in\mathcal{L}(2\D)$. Hence, the evaluation $f_{2\D}(P_\infty)$ in $\Tilde{Ev_P}$ corresponds exactly to $\frac{f}{\mathcal{R}^2}(P_\infty)$ in $Ev_{2\mathcal{R}}$. This justifies our motivation to write $f_{2\D}(P_\infty):=f^{(2n-2)}(P_0)$ the leading coefficient of $f$.
Moreover, the function $\frac{1}{\mathcal{R}^2(x)}$ belongs to $\mathcal{L}(2\mathcal{R})$. For all places $P_i\neq P_\infty$ in $\mathcal P$, $\frac{1}{\mathcal{R}^2}(P_i)$ does not vanish and we have $f(P_i)=\frac{f}{\mathcal{R}^2}(P_i)\frac{1}{\mathcal{R}^2}(P_i)^{-1}$. Finally,  $\Tilde{Ev_\mathcal{P}}(f)=\varphi_{2\mathcal{R}}\circ Ev_{2\mathcal{R}}\circ\phi_{2\mathcal{R}}(f)$, with 
$$\begin{array}{lccc}
 \varphi_{2\mathcal{R}}:    &  \mathbb F_{q} \times \mathbb{F}_q \times \mathbb{F}_{q^{d_1}} \times \dots \times \mathbb F_{q^{d_N}} & \hspace{-11pt} \rightarrow & \mathbb F_{q} \times \mathbb{F}_q \times \mathbb{F}_{q^{d_1}} \times \dots \times \mathbb F_{q^{d_N}} \\
     & (a_\infty,a_0,a_1,\ldots,a_N) &  \hspace{-11pt} \mapsto & \hspace{-10pt} (a_\infty,\mathcal{R}^2(P_0)a_0,\mathcal{R}^2(P_1)a_1,\ldots,\mathcal{R}^2(P_N)a_N).
\end{array}$$
\begin{center}
\begin{tikzpicture}
  \tikzstyle{stateEdgePortion} = [black,thick];
  \tikzstyle{stateEdge} = [stateEdgePortion,->];
  \tikzstyle{edgeLabel} = [pos=0.5, text centered, font={\sffamily\small}];
    \node[name=LD] {$\mathcal{L}(2\D)$};
    \node[name=LR, right of=LD, node distance=10em] {$\mathcal L(2\mathcal R)$};     \node[name=FqN, below of=LR, node distance=10em] {$\mathbb{F}_q^{2n-1}$};
    \node[name=Fqn, below of=LD, node distance=10em] {$\mathbb{F}_q^{2n-1}$};
    
  \draw ($(LD.east) + (0em,0em)$) 
      edge[stateEdge] node[edgeLabel,yshift=1em]{$\phi_{2\mathcal{R}}$} 
      ($(LR.west) +  (0em,0em)$);
  \draw ($(LD.south) + (0em,0em)$) 
      edge[stateEdge] node[edgeLabel,xshift=-2em]{$\Tilde{Ev_\mathcal{P}}$} 
      ($(Fqn.north) +  (0em,0em)$);
  \draw ($(LR.south) + (0em,0em)$) 
      edge[stateEdge] node[edgeLabel,xshift=2em]{$Ev_{2\mathcal{R}}$} 
      ($(FqN.north) +  (0em,0em)$);
    \draw ($(FqN.west) + (0em,0em)$) 
      edge[stateEdge] node[edgeLabel,yshift=-1em]{$\varphi_{2\mathcal{R}}$} 
      ($(Fqn.east) +  (0em,0em)$);
   
\end{tikzpicture}
\end{center}
Hence the previous diagram is commutative, and $\Tilde{Ev_P}$ is bijective from $\mathcal{L}(2\D)$ to $\mathbb{F}_q^{2n-1}$. Before constructing the algorithm using this evaluation map, note that we can use the divisor $\mathcal{R}$ to obtain an algorithm with Theorem \ref{Algogene}. In fact, let $Q$ be a place of degree $n$. Then, the place $Q$, the divisor $\mathcal{R}$ and the set $\mathcal{P}$ verify the conditions of Corollary \ref{Algodegresupnoderiv} of Theorem \ref{Algogene}. Thus, an algorithm of multiplication in $\mathbb{F}_{q^n}$ is given by
\begin{equation}\label{algoabusex}
    xy= E_Q \circ Ev_{2\mathcal{R}}^{-1} \left( E_{\mathcal{R}}\circ Ev_Q^{-1}(x) \underline{\odot} E_{\mathcal{R}}\circ Ev_Q^{-1}(y)  \right),
\end{equation}
where $E_{\mathcal{R}}$ is the restriction of $E_{2\mathcal{R}}$ to $\mathcal{L}(\mathcal{R})$, and the rest is defined as in Theorem $\ref{Algogene}$, with $Ev_Q$ from $\mathcal{L}(2\mathcal{R})$ to $\mathbb{F}_{q^n}$.
The interpolation is done with rational functions.
\end{example}

\begin{remark}
The above example shows how to substitute the evaluation at $P_\infty$ of a function in $\mathcal{L}(kP_\infty)$, for any positive integer $k$. In this case, the algorithm of Theorem \ref{Algogene} is modified as follows. 
\end{remark}

\begin{proposition}[Polynomial interpolation CCMA on the projective line]\label{algoavecPinfty}
Let \begin{itemize}
      \item $\mathbb{F}_q(x)$ be the rational function field over $\mathbb{F}_q$,
      \item $n$ be a positive integer,
      \item $Q$ be a degree $n$ place of $\F_q(x)$,
      \item ${\mathcal P}=\{P_\infty,P_0,P_1,\ldots , P_N\}$ be an ordered set of places of arbitrary degrees of $\F_{q}(x)$, with $P_\infty$ the place at infinity and $P_0$ associated to the polynomial $x$,
      \item $u_0,u_1,\ldots,u_N$ be positive integers, with $u_0<n-1$.
   \end{itemize}

We set $\D=(n-1)P_\infty$. If \begin{equation}\label{condalg}
    \sum_{i=0}^{N}u_i\deg P_i=2n-2,
\end{equation}
then
\begin{enumerate}
\item[(i)] the evaluation map
$$\begin{array}{lccc}
 Ev_Q: & \mathcal L(\D) & \rightarrow &   F_Q \\
    &  f              & \mapsto     & f(Q)
\end{array}$$
is bijective,
\item[(ii)] the evaluation map
$$\begin{array}{lccc}
 \Tilde{Ev_{\mathcal P}}: & \mathcal L(2\D) & \rightarrow & \mathbb F_{q} \times \mathbb F_{q}^{u_0} \times (\mathbb F_{q^{\deg P_1}})^{u_1} \times \dots \times (\mathbb F_{q^{\deg P_N}})^{u_N}  \\
    &  f              & \mapsto     & \left(\strut f_{2\D}(P_\infty),\varphi_{2\D,P_0,u_0}(f), \varphi_{2\D,P_1,u_1}\left(f\right),\ldots,\varphi_{2\D,P_N,u_N}\left(f\right)\right)
\end{array}$$
is an isomorphism of vector spaces.
\end{enumerate}
Moreover,
\begin{itemize}
\item [(1)]for any two elements $x$, $y$ in $\mathbb F_{q^n}$, we have a multiplication algorithm $\mathcal{U}_{q,n}^{\mathcal{P},\underline{u}}(Q)$ of type polynomial interpolation such that:
\begin{equation}\label{directproductmodif}
xy= E_Q \circ \Tilde{Ev_{\mathcal P}}^{-1} \left(\Tilde{ E_{\mathcal P}}\circ Ev_Q^{-1}(x) \underline{\odot} \Tilde{E_{\mathcal P}}\circ Ev_Q^{-1}(y)  \right),
\end{equation}
where $E_Q$ denotes the canonical projection from the valuation ring ${\mathcal O}_Q$ of the place $Q$ in its residue class field $F_Q$, $\Tilde{Ev_{\mathcal P}}^{-1}$ the inverse map of $\Tilde{Ev_{\mathcal P}}$, $\underline{\odot}$ the generalized Hadamard product in $\mathbb F_{q} \times \mathbb F_{q}^{u_0} \times (\mathbb F_{q^{\deg P_1}})^{u_1} \times \dots \times (\mathbb F_{q^{\deg P_N}})^{u_N}$, $\circ$ the standard composition map, and 
$$\begin{array}{lccc}
 \Tilde{E_{\mathcal P}}: & \mathcal L(\D) & \rightarrow & \mathbb F_{q} \times \mathbb F_{q}^{u_0} \times (\mathbb F_{q^{\deg P_1}})^{u_1} \times \dots \times (\mathbb F_{q^{\deg P_N}})^{u_N}  \\
    &  f              & \mapsto     & \left(\strut f_{\D}(P_\infty),\varphi_{\D,P_0,u_0}(f), \varphi_{\D,P_1,u_1}\left(f\right),\ldots,\varphi_{\D,P_N,u_N}\left(f\right)\right),
\end{array}$$
\item [(2)] the algorithm $\mathcal{U}_{q,n}^{\mathcal{P},\underline{u}}(Q)$ defined by \eqref{directproductmodif} has bilinear complexity $$\mu(\mathcal{U}_{q,n}^{\mathcal{P},\underline{u}})= \sum_{i=0}^N \mu_q(\deg P_i,u_i)+1.$$
\end{itemize}
\end{proposition}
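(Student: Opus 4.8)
The plan is to reduce Proposition \ref{algoavecPinfty} to the already-established Theorem \ref{Algogene} (and Corollary \ref{Algodegresupnoderiv}) via the substitution described in Example \ref{abusex}: we replace the divisor $\D = (n-1)P_\infty$ by an auxiliary divisor $\mathcal{R}$ of degree $n-1$ supported away from $Q$ and from $\mathcal{P}$, use the natural isomorphism $\phi_{k\mathcal{R}} : \mathcal{L}(k(n-1)P_\infty) \to \mathcal{L}(k\mathcal{R})$, $x^i \mapsto x^i/\mathcal{R}(x)^k$ for $k = 1, 2$, and transport the evaluation maps through it. First I would verify the two bijectivity claims. For (i): $\deg(\D - Q) = (n-1) - n = -1 < 0$, so $\D - Q$ is non-special (\cite{stic2}, Remark 1.6.11), hence by Corollary \ref{Algodegresupnoderiv} (or directly, since $\dim\mathcal{L}(\D) = n = \deg F_Q$ and $Ev_Q$ is $\F_q$-linear with trivial kernel because a nonzero $f \in \mathcal{L}(\D)$ vanishing at $Q$ would force $\D - Q \geq 0$) $Ev_Q$ is an isomorphism of $\F_q$-vector spaces. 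For (ii): by the condition \eqref{condalg}, $\sum_{i=0}^N u_i \deg P_i = 2n-2 = \deg(2\D)$; the generalized evaluation map $\varphi_{2\D, P_i, u_i}$ has kernel exactly $\mathcal{L}(2\D - u_i P_i)$ locally, so the kernel of the joint derivative-evaluation part is $\mathcal{L}(2\D - \mathcal{G})$ with $\mathcal{G} = u_0 P_0 + \cdots + u_N P_N$, and adding the extra coordinate $f_{2\D}(P_\infty)$ — which by Definition \ref{abus} is the leading coefficient $f^{(2n-2)}(P_0)$, i.e. the $t_{P_0}^{2n-2}$-coefficient — amounts to imposing the additional pole condition at $P_\infty$, so the full kernel is $\mathcal{L}(2\D - \mathcal{G} - P_\infty)$, a divisor of degree $(2n-2) - (2n-2) - 1 = -1 < 0$, whence injectivity; surjectivity then follows by dimension count since $\dim\mathcal{L}(2\D) = 2n-1$ equals the $\F_q$-dimension of the target $\mathbb{F}_q \times \mathbb{F}_q^{u_0} \times \prod_i (\mathbb{F}_{q^{\deg P_i}})^{u_i} = 1 + \sum_{i} u_i \deg P_i = 2n-1$.

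Next I would justify part (1): that formula \eqref{directproductmodif} actually computes $xy$. The key algebraic fact is \eqref{TRICK}, $f_{\D}(P_\infty)\, g_{\D}(P_\infty) = (fg)_{2\D}(P_\infty)$, which holds because the leading coefficient of a product of polynomials is the product of the leading coefficients; together with the standard fact that the ordinary derivative evaluations $\varphi_{\D,P_i,u_i}$ are multiplicative in the sense that the truncated local product at $P_i$ of $\varphi_{\D,P_i,u_i}(f)$ and $\varphi_{\D,P_i,u_i}(g)$ equals $\varphi_{2\D,P_i,u_i}(fg)$, this says precisely that $\Tilde{E_{\mathcal{P}}}$ is compatible with the generalized Hadamard product: for $f,g \in \mathcal{L}(\D)$ one has $\Tilde{Ev_{\mathcal{P}}}(fg) = \Tilde{E_{\mathcal{P}}}(f) \,\underline{\odot}\, \Tilde{E_{\mathcal{P}}}(g)$. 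Starting from $x, y \in \F_{q^n} \cong F_Q$, lift them via $Ev_Q^{-1}$ to $f, g \in \mathcal{L}(\D)$; then $fg \in \mathcal{L}(2\D)$ and $fg \bmod Q$ represents $xy$ (this uses $\mathrm{supp}\,\D \cap \{Q\} = \emptyset$ and $E_Q$ being a ring homomorphism on $\mathcal{O}_Q$); applying $\Tilde{Ev_{\mathcal{P}}}^{-1}$ to $\Tilde{E_{\mathcal{P}}}(f)\,\underline{\odot}\,\Tilde{E_{\mathcal{P}}}(g) = \Tilde{Ev_{\mathcal{P}}}(fg)$ recovers $fg$ by bijectivity of $\Tilde{Ev_{\mathcal{P}}}$ established in (ii), and $E_Q$ of that is $xy$. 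Cleaner still, one can make this rigorous by noting the commutative diagram of Example \ref{abusex}: $\Tilde{Ev_{\mathcal{P}}} = \varphi_{2\mathcal{R}} \circ Ev_{2\mathcal{R}} \circ \phi_{2\mathcal{R}}$ and a parallel factorization of $\Tilde{E_{\mathcal{P}}}$, reducing the correctness of \eqref{directproductmodif} to the correctness of \eqref{algoabusex}, which is itself an instance of Theorem \ref{Algogene} with divisor $\mathcal{R}$ — one only has to check that the diagonal rescaling $\varphi_{2\mathcal{R}}$ (multiplication by $\mathcal{R}^2(P_i)$ in coordinate $i$) is an isomorphism intertwining the two Hadamard products up to the same rescaling on the $\mathcal{L}(\mathcal{R})$ side, so the rescalings cancel in the round trip.

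Finally, part (2) is a bookkeeping step: by construction the algorithm performs one bilinear multiplication for the $P_\infty$-coordinate (the product $f_{\D}(P_\infty) g_{\D}(P_\infty)$ of two quantities both depending on the inputs) plus, for each $i = 0, 1, \ldots, N$, the truncated local product in $\mathbb{F}_{q^{\deg P_i}}[[t_{P_i}]]/(t_{P_i}^{u_i})$, which by definition costs $\mu_q(\deg P_i, u_i)$ bilinear multiplications; summing gives $\mu(\mathcal{U}_{q,n}^{\mathcal{P},\underline{u}}) = \sum_{i=0}^N \mu_q(\deg P_i, u_i) + 1$. I expect the main obstacle to be the bijectivity of $\Tilde{Ev_{\mathcal{P}}}$ in (ii) — specifically, arguing cleanly that appending the leading-coefficient coordinate $f_{2\D}(P_\infty)$ to the derivative evaluations at the $P_i$ is \emph{exactly} equivalent, as a linear condition on $\mathcal{L}(2\D)$, to the pole condition at $P_\infty$ imposed by working in $\mathcal{L}(2\mathcal{R})$ and evaluating there at $P_\infty$; once this identification is made precise (it is the content of the computation $\frac{f}{\mathcal{R}^2}(P_\infty) = a_{2n-2} = f_{2\D}(P_\infty)$ in Example \ref{abusex}), the rest is routine. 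The hypothesis $u_0 < n-1$ is used only to guarantee that the derivative evaluation at $P_0$ and the separate leading-coefficient (top local-expansion) coordinate at $P_0$ do not overlap — i.e. that order $u_0$ truncation at $P_0$ and the $t_{P_0}^{2n-2}$-coefficient are independent linear functionals on $\mathcal{L}(2\D)$ — which is what makes the kernel computation $\mathcal{L}(2\D - \mathcal{G} - P_\infty)$ valid.
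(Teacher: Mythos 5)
Your proposal is correct and takes essentially the same route as the paper's proof: multiplicativity of $\Tilde{E_{\mathcal P}}$ under the generalized Hadamard product via the leading-coefficient identity \eqref{TRICK}, injectivity of $\Tilde{Ev_{\mathcal P}}$ from the negative-degree divisor $2\D-P_\infty-\mathcal{G}$ together with a dimension count, and the same bilinear-multiplication count $\sum_{i=0}^N\mu_q(\deg P_i,u_i)+1$. The suggested reduction through Example \ref{abusex} with an auxiliary divisor $\mathcal{R}$ is only an optional detour (and with derivative evaluations the rescaling $\varphi_{2\mathcal{R}}$ is no longer diagonal, and a suitable $\mathcal{R}$ need not exist in edge cases such as $q=2$, $n=2$), but your direct argument stands on its own and matches the paper's.
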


\begin{proof}
Since $\D=(n-1)P_\infty$, $\mathcal{L}(\D)$ is isomorphic to $\mathbb{F}_{q^n}$, and we associate elements of $\mathcal{L}(\D)$ to elements of $\mathbb{F}_{q^n}$. For $f,g\in\mathcal L(\D)$, if we compute the Generalized Hadamard product of $\Tilde{E_\mathcal{P}(f)}$ and $\Tilde{E_\mathcal{P}(g)}$, we get

\[ \begin{aligned}
\Tilde{E_\mathcal P}(f) \odot \Tilde{E_\mathcal P}(g) & = \begin{pmatrix} 
f_{\D}(P_\infty) \\
\varphi_{\D,P_0,u_0}(f) \\
\varphi_{\D,P_1,u_1}\left(f\right)\\
\vdots\\
\varphi_{\D,P_N,u_N}\left(f\right)
\end{pmatrix}
\underline{ \odot}
\begin{pmatrix} 
g_{\D}(P_\infty) \\
\varphi_{\D,P_0,u_0}(g) \\
\varphi_{\D,P_1,u_1}\left(g\right)\\
\vdots\\
\varphi_{\D,P_N,u_N}\left(g\right)\end{pmatrix} \\ 
& =  \begin{pmatrix} 
f_{\D}(P_\infty)g_{\D}(P_\infty) \\
\varphi_{\D,P_0,u_0}(f)\varphi_{\D,P_0,u_0}(g) \\
\varphi_{\D,P_1,u_1}\left(f\right)\varphi_{\D,P_1,u_1}\left(g\right)\\
\vdots\\
\varphi_{\D,P_N,u_N}\left(f\right)\varphi_{\D,P_N,u_N}\left(g\right)\end{pmatrix}\\
& = \begin{pmatrix} 
(fg)_{2\D}(P_\infty) \\
\varphi_{2\D,P_0,u_0}(fg)\\
\varphi_{2\D,P_1,u_1}\left(fg\right)\\
\vdots\\
\varphi_{2\D,P_N,u_N}\left(fg\right)\end{pmatrix}=\Tilde{Ev_P}(fg).
\end{aligned}\]
We have to prove that $\Tilde{Ev_P}$ is bijective. First, the condition $u_0<n-1$ attests that $f^{(2n-2)}(P_0)$ is not involved in $\varphi_{2\D,P_0,u_0}(f)$. Let $f=a_0+a_1x+\cdots+a_{2n-2}x^{2n-2}$ be a function in $\mathcal{L}(2\D)$ such that $f\in\ker\Tilde{Ev_\mathcal P}$. In particular, $f^{(2n-2)}(P_0)=0=a_{2n-2}$. Then, $f=\sum_{i=0}^{2n-3}a_ix^i\in\mathcal{L}((2n-3)P_\infty$) and $$\left(\varphi_{2\D,P_0,u_0}(f), \varphi_{2\D,P_1,u_1}\left(f\right),\ldots,\varphi_{2\D,P_N,u_N}\left(f\right)\right)=(0,0,\ldots,0).$$ Hence, $f\in\ker \Tilde{Ev_P}\subseteq \mathcal{L}((2n-3)P_\infty-\sum_{i=0}
^N u_iP_i)$. But, the divisor $(2n-3)P_\infty-\sum_{i=0}
^N u_iP_i$ is of degree -1 so $f=0$ and $\ker\Tilde{Ev_\mathcal P}=\{0\}$. Thus   $\Tilde{Ev_\mathcal P}$ is injective, and bijective since between two vector spaces of same dimension. 
For all $f,g\in\mathcal{L}(\D)$, we obtain
$$fg=\Tilde{Ev_P}^{-1}(\Tilde{E_\mathcal P}(f) \underline{\odot} \Tilde{E_\mathcal P}(g)),$$
and finally, with $f=Ev_Q^{-1}(x)$ and $g=Ev_Q^{-1}(y)$ for any $x,y\in\mathbb{F}_{q^n}$,
$$xy= E_Q \circ \Tilde{Ev_{\mathcal P}}^{-1} \left( \Tilde{E_{\mathcal P}}\circ Ev_Q^{-1}(x)\underline{\odot} \Tilde{E_{\mathcal P}}\circ Ev_Q^{-1}(y)  \right),$$
where $E_Q$ denotes the canonical projection from the valuation ring ${\mathcal O}_Q$ of the place $Q$ in its residue class field $F_Q$. 
Hence, we obtain an algorithm of polynomial interpolation, since functions in the Riemann-Roch spaces are polynomials. Its bilinear multiplications are first given by the $\varphi_{\D,P_i,u_i}\left(f\right)\varphi_{\D,P_i,u_i}\left(g\right)$, that require $\mu_q(\deg P_i,u_i)$ bilinear multiplications over $\mathbb{F}_q$ for each $i=0,1,\ldots,N$. Then, one more bilinear multiplication is used to compute $f_{\D}(P_\infty)g_{\D}(P_\infty)$. These multiplications are independent of the choice of the place $Q$ chosen to represent $\mathbb{F}_{q^n}$. Thus the bilinear complexity of the algorithm $\mathcal{U}_{q,n}^{\mathcal{P},\underline{u}}(Q)$ is the same for all possible $Q$, and is given by $\mu(\mathcal{U}_{q,n}^{\mathcal{P},\underline{u}})= \sum_{i=0}^N \mu_q(\deg P_i,u_i)+1. $
\end{proof}

\begin{remark}
In fact, let $Q'$ be another place of degree $n$. In our construction, $Ev_Q$ and $Ev_{Q'}$ are both trivial. Then, the only difference between $\mathcal{U}_{q,n}^{\mathcal{P},\underline{u}}(Q)$ and $\mathcal{U}_{q,n}^{\mathcal{P},\underline{u}}(Q')$ is that $E_Q\neq E_{Q'}$. This is why we consider $\mathcal{U}_{q,n}^{\mathcal{P},\underline{u}}$ as an algorithm that can be applied to any place of degree $n$ of $\mathbb{F}_q(x)$, with bilinear complexity $\mu(\mathcal{U}_{q,n}^{\mathcal{P},\underline{u}})$.
\end{remark}

\begin{corollary}\label{coroalgosansED}
Without derivative evaluations, i.e. $\underline{u}=(1,\ldots,1)$, the condition (\ref{condalg}) becomes \begin{equation}\label{condsansed}\sum_{P\in\mathcal{P}}\deg P=2n-1,\end{equation}
and Proposition \ref{algoavecPinfty} gives an algorithm $\mathcal{U}_{q,n}^\mathcal{P}$ for the multiplication in $\mathbb{F}_{q^n}$, with bilinear complexity $\mu(\mathcal{U}_{q,n}^\mathcal{P})=\sum_{P\in\mathcal{P}}\mu_q(\deg P)$.
\end{corollary}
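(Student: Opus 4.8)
The plan is to specialize Proposition~\ref{algoavecPinfty} to the case $\underline{u}=(1,\ldots,1)$ and verify that the hypotheses translate exactly as claimed. First I would substitute $u_i=1$ for all $i=0,1,\ldots,N$ into the key hypothesis~\eqref{condalg}, namely $\sum_{i=0}^N u_i\deg P_i = 2n-2$. Since $P_\infty$ appears in $\mathcal{P}$ with degree $1$ but is \emph{not} among the $P_0,\ldots,P_N$ carrying a multiplicity $u_i$ (it is handled separately via the leading-coefficient trick of Definition~\ref{abus}), the total $\sum_{P\in\mathcal{P}}\deg P$ counts $\deg P_\infty = 1$ in addition to $\sum_{i=0}^N \deg P_i = 2n-2$. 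Hence $\sum_{P\in\mathcal{P}}\deg P = (2n-2)+1 = 2n-1$, which is precisely~\eqref{condsansed}. I would also note that the side condition $u_0<n-1$ from Proposition~\ref{algoavecPinfty} is automatically satisfied here since $u_0=1$ and $n\geq 2$ (the statement concerns extensions of degree $n>1$, and for $n=2$ one checks the degenerate case directly, or simply observes $1 < n-1$ fails only at $n=2$, which is covered by the small-extension analysis of Section~\ref{polyinter}).

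Next I would invoke the conclusions of Proposition~\ref{algoavecPinfty} verbatim: parts~(i) and~(ii) give that $Ev_Q$ and $\Tilde{Ev_{\mathcal P}}$ are isomorphisms of $\mathbb{F}_q$-vector spaces, part~(1) yields the polynomial-interpolation multiplication algorithm $\mathcal{U}_{q,n}^{\mathcal{P},\underline{u}}(Q)$ via formula~\eqref{directproductmodif}, and part~(2) gives its bilinear complexity as $\sum_{i=0}^N \mu_q(\deg P_i,u_i)+1$. Setting every $u_i=1$, the truncated product at a place of degree $d$ with multiplicity $1$ is just the ordinary product in $\mathbb{F}_{q^d}$, so $\mu_q(\deg P_i,1)=\mu_q(\deg P_i)$; and the extra $+1$ accounts exactly for the single bilinear multiplication $f_{\D}(P_\infty)g_{\D}(P_\infty)$ of the leading coefficients, which by~\eqref{TRICK} equals $\mu_q(\deg P_\infty) = \mu_q(1) = 1$. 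Therefore the total rewrites as $\sum_{i=0}^N \mu_q(\deg P_i) + \mu_q(\deg P_\infty) = \sum_{P\in\mathcal{P}}\mu_q(\deg P)$, since $\mathcal{P} = \{P_\infty, P_0, P_1, \ldots, P_N\}$.

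The proof is essentially a bookkeeping specialization, so there is no serious obstacle; the only point requiring care is the indexing convention, making sure that $P_\infty$ is counted once on the left of~\eqref{condsansed} and that the ``$+1$'' in the complexity is re-absorbed as $\mu_q(1)$ for the place $P_\infty$, so that the sum in the conclusion genuinely ranges over \emph{all} of $\mathcal{P}$ including $P_\infty$. I would write the argument as a short paragraph: ``Apply Proposition~\ref{algoavecPinfty} with $\underline{u}=(1,\ldots,1)$. Then $\mu_q(\deg P_i,1)=\mu_q(\deg P_i)$ and condition~\eqref{condalg} reads $\sum_{i=0}^N \deg P_i = 2n-2$, i.e. $\sum_{P\in\mathcal{P}}\deg P = 2n-1$ once the place $P_\infty$ is included. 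The claimed bilinear complexity follows from part~(2) of Proposition~\ref{algoavecPinfty} together with $\mu_q(1)=1$.'' and leave it at that.
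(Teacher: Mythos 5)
Your proof is correct and takes essentially the same route as the paper, which states the corollary without a separate proof as the immediate specialization $\underline{u}=(1,\ldots,1)$ of Proposition~\ref{algoavecPinfty}: the sum in (\ref{condalg}) over $P_0,\ldots,P_N$ equals $2n-2$ and becomes $2n-1$ once $\deg P_\infty=1$ is counted, while the ``$+1$'' in the complexity is reabsorbed as $\mu_q(1)$ for $P_\infty$, exactly as you argue. Your aside about the hypothesis $u_0<n-1$ failing when $n=2$ flags a point the paper itself glosses over (it applies the corollary at $n=2$ in Corollary~\ref{F22}, where the bijectivity is verified directly), so that extra care is a merit of your write-up rather than a gap.
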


\begin{example}\label{suiteex1}
With the settings of Example \ref{abusex}, Corollary \ref{coroalgosansED} gives an algorithm $\mathcal{U}_{q,n}^{\mathcal{P}}(Q)$ with $\D=(n-1)P_\infty$, $Q$ and $\mathcal{P}$. The multiplication is given by the formula (\ref{directproductmodif}). We saw $\Tilde{Ev_\mathcal{P}}=\varphi_{2\mathcal{R}}\circ Ev_{2\mathcal{R}}\circ\phi_{2\mathcal{R}}$. Moreover, we can similarly argue that $\Tilde{E_\mathcal{P}}$ is equal to $\varphi_\mathcal{R}\circ E_{\mathcal{R}}\circ\phi_\mathcal{R}$, where  $\phi_\mathcal{R}$ and $\varphi_\mathcal{R}$ are defined the same way (using $\mathcal{R}(x)$ instead of $\mathcal{R}^2(x))$. Finally, the algorithm $\mathcal{U}_{q,n}^{\mathcal{P}}(Q)$ is defined by

$
 xy   = E_Q \circ \Tilde{Ev_{\mathcal{P}}}^{-1} \left( \Tilde{E_{\mathcal{P}}}\circ Ev_Q^{-1}(x)\underline{\odot} \Tilde{E_{\mathcal{P}}}\circ Ev_Q^{-1}(y)  \right)
$
$$
 =  E_Q\circ \phi_{2\mathcal{R} }^{-1}\circ Ev_{2\mathcal R}^{-1}\circ\varphi_{2\mathcal{R}}^{-1}\left(\varphi_\mathcal{R}\circ E_{\mathcal{R}}\circ\phi_\mathcal{R}\circ Ev_Q^{-1}(x)\underline{\odot} \varphi_\mathcal{R}\circ E_{\mathcal{R}}\circ\phi_\mathcal{R}\circ Ev_Q^{-1}(y)  \right),
$$
where $Ev_Q$ is this time defined from $\mathcal{L}(2\mathcal{D})$ to $\mathbb{F}_{q^n}$.
\end{example}

Using this result for the construction with rational places, we can include the place at infinity in our set up. We obtain an algorithm of type polynomial interpolation of optimal bilinear complexity when $n=\frac{1}{2}q+1$.

\begin{corollary}\label{caslimite}
Let $q\geq2$ be an even prime power and $n=\frac{1}{2}q+1 $. Let $\mathcal{P}$ be the set of all rational places of $\mathbb F_q(x)$. Given $Q$ a place of degree $n$, Corollary \ref{coroalgosansED} gives a Chudnovsky-type algorithm over the projective line $\mathcal{U}_{q,n}^\mathcal{P}(Q)$ for the multiplication in $\mathbb{F}_{q^n}$. This algorithm interpolates over polynomials and computes $2n-1$ bilinear multiplications in $\mathbb{F}_q$.
\end{corollary}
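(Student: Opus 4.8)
The statement is a bookkeeping application of Corollary~\ref{coroalgosansED} — equivalently, of Proposition~\ref{algoavecPinfty} with $\underline{u}=(1,\dots,1)$ — to the rational function field $\mathbb{F}_q(x)$ with $\D=(n-1)P_\infty$. The plan is to check that the prescribed $\mathcal{P}$ together with a suitable degree-$n$ place $Q$ meet the set-up of Proposition~\ref{algoavecPinfty}, and then to read off the bilinear complexity from its item~(2). The reason one must invoke this refined statement rather than Corollary~\ref{originalchud} (or the Proposition of Subsection~\ref{polyinter}) is precisely Remark~\ref{boarderline}: those would require $B_1(\mathbb{F}_q(x)/\mathbb{F}_q)=q+1$ to exceed $2n-1$, whereas here $2n-1=q+1$ exactly, so the evaluation at $P_\infty$ cannot be discarded and must be recovered through the leading-coefficient substitution of Definition~\ref{abus}.

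For the verifications: $\mathbb{F}_q(x)$ has exactly $q+1$ rational places, namely $P_\infty$ together with the place $P_a$ of $x-a$ for each $a\in\mathbb{F}_q$ (with $P_0$ the place attached to $x$). Hence $\mathcal{P}$, the set of all rational places, has $|\mathcal{P}|=q+1$ and in particular contains both $P_\infty$ and $P_0$, as the set-up of Proposition~\ref{algoavecPinfty} demands; moreover $\sum_{P\in\mathcal{P}}\deg P=q+1$. Since $q$ is even, $n=\frac{1}{2}q+1$ is an integer and $2n-1=q+1$, so $\sum_{P\in\mathcal{P}}\deg P=2n-1$, which is exactly condition~\eqref{condsansed}. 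A monic irreducible polynomial of degree $n$ over $\mathbb{F}_q$ exists for all $q$ and $n$, giving a place $Q$ of degree $n$ with $F_Q\cong\mathbb{F}_{q^n}$. The remaining hypothesis of Proposition~\ref{algoavecPinfty}, $u_0<n-1$, reads $n\geq3$ here (since $u_0=1$), i.e.\ $q\geq4$, which I would treat as the generic case; the only smaller even prime power, $q=2$ with $n=2$, is handled by hand, since $\widetilde{Ev_{\mathcal{P}}}:\mathcal{L}(2\D)\to\mathbb{F}_2^{3}$ sends $a_0+a_1x+a_2x^2$ to $(a_2,\,a_0,\,a_0+a_1+a_2)$, which is bijective — this is the Karatsuba multiplication in $\mathbb{F}_4$, a known special case of the construction.

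Granting this, Corollary~\ref{coroalgosansED} produces the algorithm $\mathcal{U}_{q,n}^{\mathcal{P}}(Q)$. It interpolates with polynomials because $\D=(n-1)P_\infty$ makes $\mathcal{L}(\D)=\langle 1,x,\dots,x^{n-1}\rangle$ and $\mathcal{L}(2\D)=\langle 1,x,\dots,x^{2n-2}\rangle$ spaces of polynomials, the only non-genuine evaluation (at $P_\infty$) being read off as the leading coefficient in this basis. Finally, by item~(2) of Corollary~\ref{coroalgosansED},
\[
\mu\bigl(\mathcal{U}_{q,n}^{\mathcal{P}}\bigr)=\sum_{P\in\mathcal{P}}\mu_q(\deg P)=\sum_{P\in\mathcal{P}}\mu_q(1)=|\mathcal{P}|=q+1=2n-1,
\]
using $\mu_q(1)=1$ and noting that the ``$+1$'' appearing in Proposition~\ref{algoavecPinfty} is precisely the term $\mu_q(\deg P_\infty)=1$ of this sum; since $n=\frac{1}{2}q+1$ this meets the lower bound $2n-1$, so the algorithm is optimal. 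The only point genuinely requiring attention is that the rational-place count of $\mathbb{F}_q(x)$ hits $2n-1$ on the nose, which forces $\mathcal{P}$ to be the entire set of rational places and makes the $P_\infty$-trick indispensable rather than optional; beyond this (and the $q=2$ edge case) the argument is routine.
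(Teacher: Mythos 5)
Your proposal is correct and takes essentially the same route as the paper: Corollary~\ref{caslimite} is read off from Corollary~\ref{coroalgosansED} (Proposition~\ref{algoavecPinfty} with $\underline{u}=(1,\dots,1)$) once one observes that the $q+1$ rational places have total degree $q+1=2n-1$, with the ``$+1$'' in Proposition~\ref{algoavecPinfty} accounted for as $\mu_q(\deg P_\infty)=1$. Your explicit treatment of the borderline case $q=2$, $n=2$ (where the hypothesis $u_0<n-1$ formally fails but the map $\widetilde{Ev_{\mathcal{P}}}$ is still bijective) is a careful touch consistent with the paper, which only verifies that case in detail later, in Corollary~\ref{F22}.
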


\begin{remark}\label{U43}
For $q=4$ and $n=3,$ this construction provides an algorithm $\mathcal{U}_{4,3}^{\mathcal{P}}$ with bilinear complexity $\mu(\mathcal{U}_{4,3}^{\mathcal{P}})=5$. This is one less bilinear multiplication than the result from Cenk and {\"O}zbudak (\cite{ceoz}, \cite{survey} Table 2). 
\end{remark}

\subsection{A particular case: the quadratic extension of $\mathbb{F}_2$}\label{quad}

The case of $q=2$ and $n=2$ is problematic and interesting. CCMA cannot be constructed with PGC for the multiplication in $\mathbb {F}_{2^2}$ over $\mathbb {F}_2$. In fact, the rational function field $\mathbb F_2(x)$ has only three rational places: $P_0$, the place associated to the polynomial $x$, $P_1$, associated to $x-1$, and $P_\infty$, the place at infinity. The proposed construction requires $P_\infty$ to define the Riemann-Roch space and three other places to evaluate. Thus, we cannot use a place $\mathcal{R}$ of degree $n-1$ to define the Riemann-Roch space, as in Examples \ref{abusex} and \ref{suiteex1}. We can use derivative evaluations and build an algorithm with Theorem \ref{Algogene}. Using evaluations at $P_0$ with multiplicity 2 and at $P_1$ with multiplicity 1, this gives an algorithm computing 4 bilinear multiplications, which are exactly those of the schoolbook method. That is one more than with the Karatsuba Algorithm, that gives for all prime power $q$ the optimal bilinear complexity $\mu_q(2)=3$. Corollary \ref{caslimite} gives a Chudnovsky-type algorithm reaching this bilinear complexity.

\begin{corollary}\label{F22}
Let $Q$ be the degree 2 place of $\mathbb{F}_2(x)$ and $\mathcal P=\{P_0,P_1,P_\infty\}$, where $P_0$ and $P_1$ are the places associated to $x$ and $x-1$ respectively, and $P_\infty$ is the place at infinity. Then, $\mathcal{U}_{2,2}^\mathcal{P}(Q)$ is a Chudnovsky-type algorithm for the multiplication in the quadratic extension of $\mathbb{F}_{2}$ with bilinear complexity $\mu(\mathcal{U}_{2,2})=3$. Moreover, its bilinear multiplications are corresponding to those of the Karatsuba Algorithm.
\end{corollary}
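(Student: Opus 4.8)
The plan is to recognize the statement as the boundary instance $q=2$, $n=\tfrac12 q+1=2$ of Corollary~\ref{caslimite}, and then to make the comparison with Karatsuba explicit. First I would verify the hypotheses in this instance: $\mathbb{F}_2(x)$ has exactly three rational places, namely $P_0$ (associated to $x$), $P_1$ (associated to $x-1$) and $P_\infty$, so $\mathcal{P}=\{P_0,P_1,P_\infty\}$ is precisely the set of all rational places of $\mathbb{F}_2(x)$, with $\sum_{P\in\mathcal{P}}\deg P=3=2n-1$; moreover there is a (unique) place $Q$ of degree $n=2$, associated to the monic irreducible polynomial $x^2+x+1$, with residue field $F_Q\cong\mathbb{F}_{2^2}$. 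Since $q=2$ is an even prime power and $n=\tfrac12 q+1$, Corollary~\ref{caslimite}---hence Corollary~\ref{coroalgosansED} and Proposition~\ref{algoavecPinfty} with $\D=(n-1)P_\infty=P_\infty$, basis $\{1,x\}$ of $\mathcal{L}(\D)$ and $\{1,x,x^2\}$ of $\mathcal{L}(2\D)$---yields the Chudnovsky-type algorithm $\mathcal{U}_{2,2}^{\mathcal{P}}(Q)$ over the projective line, interpolating with polynomials, of bilinear complexity $\sum_{P\in\mathcal{P}}\mu_2(\deg P)=3\mu_2(1)=3$; as $\mu_2(2)=3$, this is optimal.

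For the second assertion I would unfold the three scalar products that the generalized Hadamard product of Proposition~\ref{algoavecPinfty} turns into bilinear multiplications. Writing $f=a_0+a_1x=Ev_Q^{-1}(x)$ and $g=b_0+b_1x=Ev_Q^{-1}(y)$ in $\mathcal{L}(\D)$ for two elements $x,y\in\mathbb{F}_{2^2}$, so that $fg=a_0b_0+(a_0b_1+a_1b_0)x+a_1b_1x^2\in\mathcal{L}(2\D)$, the identity $\Tilde{E_{\mathcal{P}}}(f)\,\underline{\odot}\,\Tilde{E_{\mathcal{P}}}(g)=\Tilde{Ev_{\mathcal{P}}}(fg)$ gives, component by component over $(P_\infty,P_0,P_1)$, the products $a_1b_1$ (product of leading coefficients, by Definition~\ref{abus}), $a_0b_0$ (evaluation at the rational place $x=0$) and $(a_0+a_1)(b_0+b_1)$ (evaluation at the rational place $x=1$). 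I would then observe that $\{a_0b_0,\ a_1b_1,\ (a_0+a_1)(b_0+b_1)\}$ is exactly the set of products computed by the Karatsuba algorithm applied to $a_0+a_1X$ and $b_0+b_1X$, since $(a_0+a_1X)(b_0+b_1X)=a_0b_0+\bigl((a_0+a_1)(b_0+b_1)-a_0b_0-a_1b_1\bigr)X+a_1b_1X^2$ and the subsequent reduction modulo $X^2+X+1$ needed to land in $\mathbb{F}_{2^2}$ uses only $\mathbb{F}_2$-linear operations; hence the bilinear part of $\mathcal{U}_{2,2}$ coincides with that of Karatsuba's algorithm, which re-confirms $\mu(\mathcal{U}_{2,2})=3$.

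The only point requiring care---and the reason this case is singled out---is that $\mathbb{F}_2(x)$ here has the minimal possible number of rational places, so the degree-$(n-1)$-place trick of Examples~\ref{abusex} and~\ref{suiteex1} is unavailable and one cannot keep $\mathcal{P}$ disjoint from $P_\infty$; accordingly I would be sure to use the leading-coefficient substitution $f_{2\D}(P_\infty):=f^{(2n-2)}(P_0)$ of Definition~\ref{abus} through Proposition~\ref{algoavecPinfty}, not an $\mathcal{R}$-based construction. If one prefers a fully self-contained argument in place of citing Corollary~\ref{caslimite}, the only computation needed is that the $\mathbb{F}_2$-linear map sending the coefficient vector $(a_0,a_1,a_2)$ of $f\in\mathcal{L}(2\D)$ to $(a_2,\ a_0,\ a_0+a_1+a_2)=\bigl(f_{2\D}(P_\infty),f(P_0),f(P_1)\bigr)$ is invertible, which is immediate from its determinant, while the bijectivity of $Ev_Q$ is the obvious isomorphism $\mathbb{F}_2[x]_{\le 1}\xrightarrow{\ \sim\}\mathbb{F}_2[x]/(x^2+x+1)$; everything else is the routine bookkeeping already done in Proposition~\ref{algoavecPinfty}.
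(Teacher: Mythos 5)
Your proposal is correct and follows essentially the same route as the paper: invoke Corollary~\ref{caslimite} (via Proposition~\ref{algoavecPinfty} with $\D=P_\infty$, bases $\{1,x\}$ and $\{1,x,x^2\}$, and the leading-coefficient substitution of Definition~\ref{abus}) to get the algorithm with $3$ bilinear multiplications, then exhibit the three products $a_0b_0$, $a_1b_1$, $(a_0+a_1)(b_0+b_1)$ and identify them with Karatsuba's. The extra details you add (explicit invertibility of the $3\times 3$ evaluation matrix, linearity of the reduction modulo $x^2+x+1$) are fine but already subsumed by Proposition~\ref{algoavecPinfty} and the paper's computation of $fg$.
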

\begin{proof}
Corollary \ref{caslimite} gives us the existence of a Chudnovsky-type algorithm for the multiplication in the quadratic extension of $\mathbb{F}_2$ with 3 bilinear multiplications.  We give some details of the construction to show the correspondence with the Karatsuba Algorithm. 

Setting $\D=P_\infty$, a function in $\mathcal{L} (\D)$ is a degree one polynomial over $\mathbb F_2$, so we use $\{1,x\}$ as basis of $\mathcal L(\D)$, and $\{1,x,x^2\}$ as basis of $\mathcal{L}(2\D)$. Hence, there is an isomorphism $Ev_Q:\mathcal L(\D) \longrightarrow \mathbb F_{2^2}$. As $x$ is a local parameter for the expansion at the place $P_0$, a function in $\mathcal L(\D)$ in the previous basis is its own Laurent expansion at $P_0$. This way, if $f=f_0+f_1x\in \mathcal L(\D)$, we have $f'(P_0)=f_1$, and if $h=h_0+h_1x+h_2x^2\in\mathcal L(2\D)$, we have $h''(P_0)=h_2$. Following Definition \ref{abus}, we write $f_{\D}(P_\infty)=f_1$ and $h_{2\D}(P_\infty)=h_2$ the evaluations of the leading coefficients of $f$ and $h$ respectively. We use the trick of the previous section: the leading coefficient of the product (in $\mathcal{L}(2\D)$) of two elements in $\mathcal{L}(\D)$ is the product of their leading coefficients. Indeed, let $f=f_0+f_1x$ and $g=g_0+g_1x$ be two elements of $\mathcal L(\D)$. By (\ref{TRICK}), we have
$(fg)_{2\D}(P_\infty)=f_{\D}(P_\infty)g_{\D}(P_\infty).$
Let $f,g$ be functions in $\mathcal{L}(\D)$. Following Proposition \ref{algoavecPinfty}, we define $E_\mathcal P(f)=\left(f_{\D}(P_\infty),f(P_0),f(P_1)\right)$ and for $h\in\mathcal{L}(2\D)$, $\Tilde{Ev_{\mathcal P}}(h):=\left (h_{2\D}(P_\infty),h(P_0),h(P_1)\right)$ 
In this case, $\underline{\odot}$ is the classical Hadamard product $\odot$, i.e. the term by term product in $\mathbb{F}_q$. The bilinear multiplications are hence given by

\[ \begin{aligned}
E_\mathcal P(f) \odot E_\mathcal P(g) & = \begin{pmatrix} 
f_{\D}(P_\infty) \\
f(P_0) \\
f(P_1)\end{pmatrix}
\odot
\begin{pmatrix} 
g_{\D}(P_\infty) \\
g(P_0) \\
g(P_1)\end{pmatrix} \\ 
& =  \begin{pmatrix} 
f_{\D}(P_\infty)g_{\D}(P_\infty) \\
f(P_0)g(P_0) \\
f(P_1)g(P_1)\end{pmatrix}
 = \begin{pmatrix} 
(fg)_{2\D}(P_\infty) \\
(fg)(P_0) \\
(fg)(P_1)\end{pmatrix}
=\Tilde{Ev_{\mathcal P}}(fg).
\end{aligned}\]

Hence, for $f,g\in\mathcal{L}(\D)$ such that $f=Ev_Q^{-1}(x)$ and $g=Ev_Q^{-1}(y)$ for any $x,y\in\mathbb F_{2^2}$, we have $\Tilde{Ev_P}^{-1}(E_\mathcal P(f) \odot E_\mathcal P(g))=fg$. In fact, the product in $\mathcal{L}(2\D)$ is given by
$$\begin{array}{ll}
fg &=
f(P_0)g(P_0)+\big(f(P_1)g(P_1)-f(P_0)g(P_0)-f_{\D}(P_\infty)g_{\D}(P_\infty) \big)x\\&\hfill +f_{\D}(P_\infty)g_{\D}(P_\infty) x^2
\\ &=f_0g_0+\big((f_0+f_1)(g_0+g_1)-f_0g_0-f_1g_1\big)x+f_1g_1x^2,
\end{array}$$
and this multiplication of $f$ and $g$ is exactly that of the Karatsuba Algorithm.
\end{proof}

\begin{remark}\label{quadchud}
Corollary \ref{F22} becomes generalized to any prime power $q$. Let $P_0$, $P_1$ and $P_\infty$ be the places of $\mathbb{F}_q(x)$ associated to $x$, $x-1$ and at infinity. Let $\mathcal{P}_2=\{P_0,P_1,P_\infty\}$, and $Q$ be a degree 2 place of $\mathbb{F}_q(x)$. Then, $\mathcal{U}^{\mathcal{P}_2}_{q,2}(Q)$ is a Chudnovsky-type algorithm for the multiplication in the quadratic extension of $\mathbb{F}_q$, such that $\mu(\mathcal{U}_{2,2}^{\mathcal{P}_2})=3$. The bilinear multiplications of this algorithm are again exactly those of Karatsuba Algorithm. 
\end{remark}

\section{Recursive Chudnovsky-type algorithm on $\mathbb{F}_q(x)$}

\subsection{Recursive Polynomial Generic Construction}

Thus far, we have built polynomial interpolation Chudnovsky-type algorithms on the projective line over $\mathbb{F}_q$. They have an optimal bilinear complexity when $n\leq\frac{1}{2}q+1$, using evaluation on rational places only. However, the evaluations can be done at places of higher degrees of $\mathbb F_q(x)$, and we can construct an algorithm for extensions of any degree. In this section, we propose a recursive generic construction of Chudnovsky-type algorithms specialized to the projective line for the multiplication in all extensions $\mathbb{F}_{q^n}$, using places of increasing degrees. First, we consider the algorithm without derivative evaluations. Let $Q$ be a place of degree $n$. According to Corollary \ref{coroalgosansED}, we need to construct a set $\mathcal{P}$, containing places such that the sum of their degrees is equal to $2n-1$. We can assume $P_0$ and $P_\infty$ are always in $\mathcal{P}$. With such a set, Proposition \ref{algoavecPinfty} gives an algorithm $\mathcal{U}^\mathcal{P}_{q,n}(Q)$ for the multiplication in any extension. But at this step, we still do not have any information about how to compute the multiplications of the evaluations at places of an arbitrary degree. Concretely, let $P_i\in\mathcal{P}$ be a place of degree $d_i$. Then, for $f,g\in\mathcal{L}((n-1)P_\infty)$, the evaluations $f(P_i)$ and $g(P_i)$ are some elements in $\mathbb{F}_{q^{d_i}}$. To compute $(fg)(P_i)=f(P_i)g(P_i)$, we use the algorithm $\mathcal{U}_{q,{d_i}}^{\mathcal{P}_i}(P_i)$, where $\mathcal P_i$ is a set of places such that the sum of their degrees is equal to $2d_i-1$.
Such an algorithm is called a recursive Chudnovsky-type algorithm over the projective line.

\begin{definition}
Let $q$ be a prime power and $n > \frac{1}{2}q+1$ be a positive integer. We call a recursive Chudnovsky-type algorithm over the projective line an algorithm from Proposition \ref{algoavecPinfty}, that computes the multiplications in intermediate extensions with recursively-defined algorithms.
\end{definition}

\begin{example}\label{exx}
We consider the multiplication in $\mathbb{F}_{3^6}$. The function field $\mathbb{F}_3(x)$ has 4 rational places, 3 places of degree 2 and 8 places of degree 3. We denote by $P_\infty$ the place at infinity, and by $P_0$, $P_1$ and $P_2$ the three other rational places. The places of degree 2 are $P_1^2$, $P_2^2$ and $P_3^2$, and $P^3$ is one of the places of degree 3. Moreover, let $Q$ be a place of degree 6.
Then, a set of places containing the 4 rational places, 2 places of degree 2 and one of degree 3 is suitable, since we have $4\times1+2\times2+1\times3=11=2\times6-1$. For example, we can take $\mathcal{P}=\{P_\infty,P_0,P_1,P_2,P_1^2,P_2^2,P^3\}$. The products of the evaluations on rational places are multiplications in $\mathbb{F}_q$. The evaluations at places $P_i^2$ of degree 2 can be multiplied with 3 bilinear multiplications by $\mathcal{U}^{\mathcal{P}_2}_{3,2}$, defined in Remark \ref{quadchud}. We also need to construct the algorithm to multiply in the extension of degree 3. 

To do this, set $\mathcal{P}_3=\{P_0,P_1,P_\infty,P_1^2\}$. The sum of the degrees of the places in $\mathcal{P}_{3}$ is equal to $3\times1+1\times2=5=2\times3-1$, and this set is suitable. The algorithm $\mathcal{U}^{\mathcal{P}_3}_{3,3}$ computes 3 bilinear multiplications in $\mathbb{F}_q$, and $\mathcal{U}^{\mathcal{P}_2}_{3,2}(P_1^2)$ computing itself 3 more bilinear multiplications. Finally, its bilinear complexity is $\mu(\mathcal{U}^{\mathcal{P}_3}_{3,3})=3\times1+1\times3=6$. That is the best-known (and optimal) bound for the multiplication in the extensions of degree 3 of $\mathbb{F}_3$. Table \ref{shema3} illustrates the structure of $\mathcal{U}^{\mathcal{P}_3}_{3,3}$.
\begin{table}
\begin{center}
\begin{tikzpicture}
\node {${\mathcal U}^{\mathcal{P}_3}_{3,3}(P^3)$} [grow'=right,level distance=2.5cm,sibling distance=0.4cm]
  child {node {$P_0$}}
  child {node {$P_1$}}
  child {node {$P_\infty$}}
  child {node {$\mathcal{U}^{\mathcal{P}_2}_{3,2}(P_1^2)$}
         child {node {$P_0$}}
         child {node {$P_1$}}
         child {node {$P_\infty$}}
        }
  ;
\end{tikzpicture}
\vspace{.5em}
\caption{Diagram of the construction of $\mathcal{U}^{\mathcal{P}_3}_{3,3}(Q)$.}\label{shema3}
\end{center}
\end{table}

Back to the multiplication in extension of degree 6, the algorithm ${\mathcal U}^\mathcal{P}_{3,6}(Q)$ computes the multiplications of the evaluations on $P_\infty$, $P_0$, $P_1$, $P_2$, and use $\mathcal{U}^{\mathcal{P}_2}_{3,2}(P_1^2)$, $\mathcal{U}^{\mathcal{P}_2}_{3,2}(P_2^2)$ and $\mathcal{U}^{\mathcal{P}_3}_{3,3}(P^3)$. This gives the bilinear complexity $\mu({\mathcal U}^{\mathcal{P}}_{3,6})=4\times1+2\times3+1\times6=16$.
The tree on the left side of Table \ref{shema} gives an illustration of the algorithm.
\end{example}
\clearpage

\begin{table}
\begin{tabular}{c c}
\begin{tikzpicture}
\node  {${\mathcal U}^\mathcal{P}_{3,6}(Q)$}[grow'=right,level distance=1.5cm,sibling distance=0.5cm]
  child[yshift=2cm] {node {$P_0$}}
  child[yshift=1.8cm] {node {$P_1$}}  
  child[yshift=1.6cm] {node {$P_2$}}
  child[yshift=1.4cm] {node {$P_\infty$}}
  child[yshift=0.7cm] {node {$\mathcal{U}^{\mathcal{P}_2}_{3,2}(P_1^2)$}[sibling distance=0.5cm]
         child {node {$P_0$}}
         child {node {$P_1$}}
         child {node {$P_\infty$}}
         }
  child[yshift=-0.2cm] {node {$\mathcal{U}_{3,2}^{\mathcal{P}_2}(P_2^2)$}[sibling distance=0.5cm]
         child {node {$P_0$}}
         child {node {$P_1$}}
         child {node {$P_\infty$}}     
        }
    child[yshift=-1.4cm] {node {$\mathcal{U}_{3,3}^{\mathcal{P}_3}(P^3)$} [grow'=right,level distance=1.5cm,sibling distance=0.5cm]
        child {node {$P_0$}}
        child {node {$P_1$}}
        child {node {$P_\infty$}}
        child [yshift=0cm]{node {$\mathcal{U}_{3,2}^{\mathcal{P}_2}(P_1^2)$}[sibling distance=0.5cm]
            child {node {$P_0$}}
            child {node {$P_1$}}
            child {node {$P_\infty$}}
        }}
  ;
\end{tikzpicture}
& 
\begin{tikzpicture}
\node {$\mathcal{U}_{3,6}^{\mathcal{P}',\underline{u}}(Q)$} [grow'=right,sibling distance=0.5cm]
  child[yshift=1.5cm] {node {$2P_0$}
    child {node {$P_0$}}
    child[sibling distance=1cm] {node {$P_0'$}[sibling distance=0.5cm]
        child {node {$f'(P_0)g(P_0)$}}
        child {node {$f(P_0)g'(P_0)$}}}
  }
  child[yshift=1.2cm] {node {$P_1$}}  
  child[yshift=0.9cm] {node {$P_2$}}
  child[yshift=0.5cm] {node {$P_\infty$}}
  child[yshift=0cm] {node {$\mathcal{U}_{3,2}^{\mathcal{P}_2}(P_1^2)$}[sibling distance=0.5cm]
         child {node {$P_0$}}
         child {node {$P_1$}}
         child {node {$P_\infty$}}
         }
  child[yshift=-1cm] {node {$\mathcal{U}_{3,2}^{\mathcal{P}_2}(P_2^2)$}[sibling distance=0.5cm]
         child {node {$P_0$}}
         child {node {$P_1$}}
         child {node {$P_\infty$}}     
        }
  child[yshift=-2.2cm] {node {$\mathcal{U}_{3,2}^{\mathcal{P}_2}(P_3^2)$}[sibling distance=0.5cm]
         child {node {$P_0$}}
         child {node {$P_1$}}
         child {node {$P_\infty$}}     
        }
  ;
\end{tikzpicture}\end{tabular}
\vspace{.5em}
\caption{Diagram of the construction of $\mathcal{U}_{3,6}^{\mathcal{P}}(Q)$ and $\mathcal{U}_{3,6}^{\mathcal{P}',\underline{u}}(Q)$.}\label{shema}
\end{table}

\noindent Our strategy can be summarized as follows:

\begin{center}
 \rule{\linewidth}{1pt}
 \end{center}
\textbf{RPGC: Recursive Polynomial Generic Construction}

\noindent For $q$ a prime power and $n\geq 2$ a positive integer, let $Q$ be a place of degree $n$ of $\mathbb{F}_q(x)$. Then, $\mathcal{U}_{q,n}^{\mathcal{P}}(Q)$ is an algorithm for the multiplication in $\mathbb{F}_{q^n}$, with the following settings:
\begin{itemize}
    \item $\D=(n-1)P_\infty$,
    \item $\mathcal{P}=\{P_1,\ldots,P_N\}$ is a set of places such that $\sum_{i=1}^N\deg P_i=2n-1 $,
    \item the basis of $\mathcal{L}(\D)$ is $\{1,x,\ldots,x^{n-1}\}$,
    \item the basis of $\mathcal{L}(2\D)$ is $\{1,x,\ldots,x^{2n-1}\}$, and
    \item apply recursively RPGC to every non-rational places in $\mathcal{P}$.
\end{itemize}
\begin{center}
 \rule{\linewidth}{1pt}
 \end{center}

\begin{proposition}\label{genericalgo}
Let $q$ be a prime power and $n\geq 2$ an integer. RPGC is a set-up for a recursive Chudnovsky-type algorithm $\mathcal{U}_{q,n}^\mathcal{P}(Q)$ over the projective line for the multiplication in $\mathbb{F}_{q^n}$ that interpolates over polynomials. Its bilinear complexity is equal to 
$$\mu(\mathcal{U}^{\mathcal{P}}_{q,n})=\sum_{k}N_k\mu(\mathcal{U}^{\mathcal{P}_k}_{q,k}). $$  
where $N_k$ is the number of places of degree $k$ in $\mathcal{P}$, and $\mathcal{U}^{\mathcal{P}_k}_{q,k}$ is built with RPGC.
\end{proposition}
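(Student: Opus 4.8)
\emph{The plan} is to argue by induction on the extension degree $n$, using Corollary \ref{coroalgosansED} to build the outer algorithm at each level and the induction hypothesis to carry out the products of evaluations at the non-rational places of $\mathcal P$. First I would verify that RPGC is a well-founded recursive procedure: for $n\geq 2$ the set $\mathcal P$ is asked to satisfy $\sum_i\deg P_i=2n-1$ with $P_0,P_\infty\in\mathcal P$, and I would point out that it can always be chosen with every $\deg P_i\leq n-1$ — $P_0,P_\infty$ already contribute a degree sum of $2$, and the remaining $2n-3$ is easily realized by places of degrees $\leq n-1$ of $\mathbb F_q(x)$ (there are $q+1$ rational places and $B_d(\mathbb F_q(x))\geq 1$ for every $d$, more than enough to reach $2n-3$ exactly, adjusting with rational places at the end). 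Thus each recursive call drops to a strictly smaller degree and every branch halts at rational places, where $\mathcal U^{\mathcal P_1}_{q,1}$ is the single multiplication of $\mathbb F_q$ and $\mu(\mathcal U^{\mathcal P_1}_{q,1})=1$; this is the base case, and it also subsumes the range $n\leq\frac{1}{2}q+1$ where $\mathcal P$ can be taken fully rational. (Existence of a degree-$n$ place $Q$ is automatic over $\mathbb F_q(x)$, being the existence of an irreducible polynomial of degree $n$ over $\mathbb F_q$.)

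For the inductive step, I would fix $n$, assume the statement for all smaller degrees, take $\D=(n-1)P_\infty$ with $\mathcal P$ as above, and apply Corollary \ref{coroalgosansED} — i.e.\ Proposition \ref{algoavecPinfty} with $\underline u=(1,\dots,1)$ — to obtain, via \eqref{directproductmodif}, a multiplication algorithm $\mathcal U^{\mathcal P}_{q,n}(Q)$ in $\mathbb F_{q^n}$ whose bilinear operations are precisely, for each $P_i\in\mathcal P$, one product $f(P_i)g(P_i)$ in $\mathbb F_{q^{\deg P_i}}$, the place $P_\infty$ contributing the product of leading coefficients of Definition \ref{abus} (the ``$+1$'' of Proposition \ref{algoavecPinfty}, which is just its $P_\infty$-term $\mu_q(1)=1$). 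The monomial bases prescribed by RPGC make this algorithm of type polynomial interpolation, and RPGC turns it into a recursive Chudnovsky-type algorithm over the projective line by computing each product $f(P_i)g(P_i)$ with $\deg P_i>1$ through the recursively built $\mathcal U^{\mathcal P_{\deg P_i}}_{q,\deg P_i}(P_i)$, which by the induction hypothesis is again a polynomial-interpolation recursive Chudnovsky-type algorithm over the projective line and contributes exactly $\mu(\mathcal U^{\mathcal P_{\deg P_i}}_{q,\deg P_i})$ bilinear multiplications over $\mathbb F_q$, with no additional bilinear operation coming from the composition. Summing over $\mathcal P$ and grouping by degree $k$ then yields
\[
\mu(\mathcal U^{\mathcal P}_{q,n})=\sum_{P\in\mathcal P}\mu\bigl(\mathcal U^{\mathcal P_{\deg P}}_{q,\deg P}\bigr)=\sum_k N_k\,\mu\bigl(\mathcal U^{\mathcal P_k}_{q,k}\bigr),
\]
with the convention $\mu(\mathcal U^{\mathcal P_1}_{q,1})=1$; equivalently, each $\mu_q(\deg P)$ in Corollary \ref{coroalgosansED} is replaced by the bilinear complexity of the sub-algorithm attached to $P$.

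I expect the main obstacle to lie not in the complexity bookkeeping, which is a straightforward substitution and regrouping, but in making the recursion rigorously well-founded: one must ensure that at every level the degree sum $2n-1$ can be met with places of degree strictly below the current $n$, so that the recursion genuinely descends and bottoms out at rational places. This rests on $\mathbb F_q(x)$ having enough places of each small degree; once this is granted, the statement follows from Corollary \ref{coroalgosansED} together with the inductive count above.
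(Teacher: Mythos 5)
Your proposal is correct and follows essentially the same route as the paper: invoke Proposition \ref{algoavecPinfty} (via Corollary \ref{coroalgosansED}) with $\D=(n-1)P_\infty$ and $\sum_{P\in\mathcal{P}}\deg P=2n-1$ to get the top-level polynomial-interpolation algorithm, replace each product $f(P)g(P)$ at a non-rational place by the recursively built $\mathcal{U}_{q,\deg P}^{\mathcal{P}_{\deg P}}(P)$, and sum the sub-algorithms' bilinear complexities grouped by degree. Your additional induction/well-foundedness discussion (choosing $\deg P_i\leq n-1$ so the recursion descends to rational places) is a reasonable extra precaution that the paper's proof leaves implicit, but it does not change the argument.
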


\begin{proof}
Let $Q$ be a degree $n$ place of $\mathbb{F}_q(x)$. We set once again $\D=(n-1)P_\infty$. Let $\mathcal{P}$ be a set of places of $\mathbb{F}_q(x)$, such that the sum of their degrees is equal to $2n-1$. We denote by $N_k$ the number of places of degree $k$ in $\mathcal{P}$. We can write $\mathcal{P}=\{P_1,\ldots,P_{N_1},P_1^2,\ldots,P_{N_2}^2,\ldots,P_1^d,\ldots,P_{N_d}^d\}$, where the $P_1,\ldots,P_{N_1}$ are rational places and for all $j=2,\ldots,d$, $P_i^j$ is a place of degree $j$ of $\mathbb{F}_q(x)$. We suppose that $Q$ is not in $\mathcal{P}$. In particular, the condition of bijectivity is given by $\sum kN_k=2n-1$, where $d$ is running over the degrees of all places in $\mathcal{P}$: the divisor $2\D-\sum_{P\in\mathcal{P}}P$ is of degree -1 and non-special, because of degree greater than $2g-2$, so zero-dimensional. Proposition \ref{algoavecPinfty} gives the main algorithm $\mathcal{U}_{q,n}^\mathcal{P}(Q)$.

Then, let $\mathcal{P}_2,\ldots,\mathcal{P}_d$ be sets of places such that for all $2\leq k\leq d$, we have $\sum_{P\in\mathcal{P}_k}\deg P=2k-1$.
We construct $\mathcal{U}_{q,k}^{\mathcal{P}_k}$ to multiply the evaluations at the places $P_i^k$ of any degree $k$. For $f,g\in\mathcal{L}(\mathcal{D})$, each product $f(P_i^k)g(P_i^k)$ is computed using $\mathcal{U}_{q,k}^{\mathcal{P}_k}(P_i^k)$. In particular, the algorithm $\mathcal{U}_{q,1}$ is just a bilinear multiplication in $\mathbb{F}_q$. Finally, the bilinear complexity given by the construction is 
\begin{center}
$\mu(\mathcal{U}^{\mathcal{P}}_{q,n})=\sum_{k}N_k\mu(\mathcal{U}^{\mathcal{P}_k}_{q,k}). $   
\end{center}\end{proof}

\begin{remark}
The whole construction is fully defined by $\mathcal{P},\mathcal{P}_2,\ldots,\mathcal{P}_d$ and $Q$.
\end{remark}

At this point, we do not know how to construct the set of places $\mathcal P$. Finding the set of places that gives the best bilinear complexity is a difficult task when $n$ is large. Hence, we focus on some families of compatible set to study the bilinear complexity of the recursive Chudnovsky-type algorithms over the projective line.

\subsection{Construct $\mathcal{P}$ taking places by increasing degrees}\label{degreecroissant}
A natural strategy is to construct the set $\mathcal{P}$ by taking every place by growing degrees until the sum of their degrees is equal to $2n-1$. If the sum is bigger than $2n-1$, we remove a place of appropriate degree. Such a set is denoted by $\mathcal{P}^{\text{deg}}$. 

\begin{definition}
We denote by $\mathcal{U}^{\mathcal{P}^{\deg}}_{q,n}$ a recursive Chudnovsky-type algorithm on the projective line for the multiplication in $\mathbb{F}_{q^n}$, where $\mathcal{P}^{\deg}$ is built taking places of increasing degrees. 
\end{definition}

The algorithms of Example \ref{exx} were built using this method.
Note that this construction is deterministic assuming that the places of $\mathbb{F}_q(x)$ have a given order. 
We know that the number of rational places over $\mathbb{F}_{q^d}(x)$ is equal to $q^d+1$. This value is the sum of the $kB_k$, for $k$ dividing $d$ and $B_k=B_k(\mathbb{F}_{q}(x))$ (\cite{stic2}, (5.40)). Then, the number of places of degree $d$ of $\mathbb{F}_q(x)$ is equal to

\begin{equation}\label{nbplaces}
B_d=\frac{1}{d}\left(q^d+1-\underset{k\neq d}{\sum_{k\mid d}}kB_k\right).    
\end{equation} 
Thus, for a given $n$ we can compute $d$, the smallest integer such that $\sum_{k=1}^dkB_k\geq 2n+1$. Then, the number $N_k$ of places of degree $k$ in $\mathcal{P}^{\deg}$ is given by  
    $$
N_k = \left\{
    \begin{array}{ll}
        0 & \mbox{if } k>d,\\
        \lceil\frac{1}{d}(2n-1-\sum_{i=1}^{d-1}iB_i)\rceil  & \mbox{if } k=d, \\
        B_k-1 & \mbox{if } 0\neq k\equiv-(2n-1-\sum_{i=1}^{d-1}iB_i)\pmod{d}, \\
        B_k & \mbox{elsewhere,}\end{array}\right.
    $$
where $B_k$ denotes the number of places of degree $k$ of $\mathbb{F}_q(x)$.
This allows one to compute iteratively the bilinear complexity since $\mu(\mathcal{U}^{\mathcal{P}^{\deg}}_{q,n})=\sum_k N_k\mu(\mathcal{U}^{\mathcal{P}^{\deg}}_{q,k})$, where $\mathcal{P}^{\deg}$ is defined in accordance with $k$ or $n$.
Table \ref{Tablebilcomp} shows this bilinear complexity for the extensions of degree lower than 18 of $\mathbb{F}_q$, with $q=2,3,4.$ We underline the bilinear complexity when it equals the one given by Table 2 of \cite{survey}, and we denote by $^+$  when we beat this complexity.
{\setlength{\tabcolsep}{3.5pt}
\begin{table}\begin{center}
 \begin{tabular}{|l|*{17}{c|}}
	\hline
	$n$  &\  2 \  & \ 3 \ & \  4 \ & 5 & 6 & 7 & 8 & 9 & 10 & 11 & 12 & 13 & 14 & 15 & 16 & 17 & 18 \\
	 \hline
	  	 $\mu(\mathcal{U}^{\mathcal{P}^{\deg}}_{2,n})$ & $\underline{3}$ & $\underline{6}$ & 11 & 15 & 18 & 26 & 29 & 37 & 40 & 48 & 51 & 60 & 65 & 70 & 78 & 81 & 90 \\
	\hline
	 $ \mu(\mathcal{U}^{\mathcal{P}^{\deg}}_{3,n})$ & $\underline{3}$ & $\underline{6}$ & $\underline{9}$ & $12$ & $16$ & $\underline{19}$ & 24 & 28 & 31 & 36 & 40 & 43 & 48 & 52 & 55 & 60 & 64
 \\
	\hline
	$ \mu(\mathcal{U}^{\mathcal{P}^{\deg}}_{4,n})$ & $\underline{3}$ & $5^+$ & $\underline{8}$ & $\underline{11}$ & $\underline{14}$ & $\underline{17}$ & $\underline{20}$ & $\underline{23}$ & $\underline{27}$ & $\underline{30}$ & $\underline{33}$ & $\underline{37}$ & 40 & $43^+$ & 47 & $50^+$ & 53\\
	\hline
\end{tabular}
\vspace{.5em}
\caption{Bilinear complexity of $\mathcal{U}^{\mathcal{P}^{\deg}}_{q,n}$ in small extensions of $\mathbb{F}_2$, $\mathbb{F}_3$ and $\mathbb{F}_4$.}\label{Tablebilcomp}
\end{center}\end{table}    }

The previous strategy of construction is not optimal, for instance because the ratio $\frac{\mu(\mathcal{U}^{\mathcal{P}^{\deg}}_{q,n})}{n}$ is not increasing. 
\begin{example}
For $q=2$ and $n=82$, we need a set of places such that the sum of their degrees is equal to 163. In this case $\mathcal{P}^{\deg}$ contains all places of $\mathbb{F}_2(x)$ of degrees lower than 6, and 8 places of degree 7. The bilinear complexity obtained is hence $\mu(\mathcal{U}^{\mathcal{P}^{\deg}}_{2,82})=503$. However, one can check that $\frac{\mu(\mathcal{U}^{\mathcal{P}^{\deg}}_{2,7})}{7}>\frac{\mu(\mathcal{U}^{\mathcal{P}^{\deg}}_{2,8})}{8}.$ Hence, it is better to use 7 places of degree 8 than 8 places of degree 7. Let $\mathcal{P}$ be a set containing all the places of degree lower than or equal to 6, and 7 places of degree 8. The algorithm obtained using this set (with the same sub-algorithms) has bilinear complexity $\mu(\mathcal{U}^{\mathcal{P}}_{2,82})=499$.
\end{example}

\subsection{Derivative evaluations in RPGC}

A possibility to improve the bilinear complexity of the algorithms - and the sub-algorithms involved -, is to use derivative evaluations. For example, we can use derivative evaluations instead of one of the places of the highest degree in an algorithm $\mathcal{U}_{q,n}^{\mathcal{P}^{\deg}}$ from the previous section. An illustration of this process is given in the following example.

\begin{example}\label{suiteex}
We consider the algorithm of Example \ref{exx}. The algorithms $\mathcal{U}_{3,2}^{\mathcal{P}_2}$ and $\mathcal{U}_{3,3}^{\mathcal{P}_3}$ having optimal bilinear complexities, there is no need to use derivative evaluations in this situation. For the extension of degree 6, it is possible to take the last place of degree 2 instead of the place of degree 3, and to use a derivative evaluation. We get a new algorithm $\mathcal{U}_{3,6}^{\mathcal{P'},\underline{u}}$. This time, we use the set $\mathcal{P}'=\{P_\infty,P_0,P_1,P_2,P_1^2,P_2^2,P_3^2\}$, with the coefficients given by $\underline{u}=(2,1,\ldots,1)$, i.e. $u_0=2$ and $u_i=1$ elsewhere. We evaluate at $P_0$ with multiplicity 2, and without multiplicity on the other places. Since $x$ is the local uniformizer for the local expansion at $P_0$, the expansion is of the form $f(P_0)+xf'(P_0)+\cdots$, for $f\in\mathcal{L}(5P_\infty)$. The truncated product of the evaluations of $f,g\in\mathcal{L}(5P_\infty)$ is given by $f(P_0)g(P_0)+x(f'(P_0)g(P_0)+f(P_0)g'(P_0))$, and requires 3 bilinear multiplications. The new algorithm ${\mathcal U}_{3,6}^{\mathcal{P}',\underline{u}}(Q)$ is obtained following Proposition \ref{algoavecPinfty}. Its bilinear complexity is $\mu({\mathcal U}_{3,6}^{\mathcal{P}',\underline{u}})=1\times3+3\times1+3\times3=15$, for one evaluation with multiplicity 2 on $P_0$, 3 evaluations on rational places and three at places of degree 2. This is the best-known bound (\cite{survey}, Table 2). This construction is illustrated in Table \ref{shema}, where $2P_0$ means that we evaluate at $P_0$ with multiplicity 2 and $P_0'$ is the second coefficient of the local expansion at $P_0$.

\end{example}

Table \ref{Tablebilcompderiv} shows some improvements of the bilinear complexities of $\mathcal{U}_{q,n}^{\mathcal{P}^{\deg}}$ given in Table \ref{Tablebilcomp}, when some derivative evaluations on rational places are used, as in Example \ref{suiteex}.

{\setlength{\tabcolsep}{3.5pt}
\begin{table}
\begin{center}
 \begin{tabular}{|l|*{17}{c|}}
	\hline
	$n$  &\  2 \  & \ 3 \ & \  4 \ & 5 & 6 & 7 & 8 & 9 & 10 & 11 & 12 & 13 & 14 & 15 & 16 & 17 & 18 \\
	 \hline
 	 $\mu(\mathcal{U}^{\mathcal{P'},\underline{u}}_{2,n})$ & - & - & 10 & 14 & - & $\underline{22}$ & 28 & 32 & 38 & 42 & 48 & 52 & 58 & 64 & 68 & 76 & 80 \\
	\hline
	 $ \mu(\mathcal{U}^{\mathcal{P'},\underline{u}}_{3,n})$ & - & - & - & - &  $\underline{15}$ & - & 23 & 27 & - & 35 & 39 & - &  47 & 51 & - & 59 & 63 \\
	\hline
	$ \mu(\mathcal{U}^{\mathcal{P'},\underline{u}}_{4,n})$ & - & - & - & - & - & - & - & - & - & - & - & - & - & - & - & - & -\\
	\hline
\end{tabular}
\vspace{.5em}

\caption{Some improvements of Table \ref{Tablebilcomp} thanks to derivative evaluations.}\label{Tablebilcompderiv}
\end{center}\end{table}    
}

\section{Asymptotical study for RPGC}

\subsection{An explicit construction for the asymptotical study}

Hitherto, the bilinear complexity obtained with RPGC can be computed for a fixed $n$, but we do not have an estimate relatively to $n$ yet. 
First, note that given a Chudnovsky-type algorithm $\mathcal{U}_{q,n}^{\mathcal{P}_n}$ over the projective line for the multiplication in $\mathbb{F}_{q^n}$, we can define an algorithm of the same bilinear complexity for an extension of degree lower than $n$.

\begin{lemma}\label{majojo}
Let $\mathcal{U}_{q,n}^{\mathcal{P}_n}$ be a Chudnovsky-type algorithm over the projective line for the multiplication in $\mathbb{F}_{q^n}.$ Then, for all $m<n$, there exists a Chudnovsky-type algorithm over the projective line $\mathcal{U}_{q,m}^{\mathcal{P}_m}$ for the multiplication in $\mathbb{F}_{q^m}$ such that
$$\mu(\mathcal{U}_{q,m}^{\mathcal{P}_m})\leq\mu(\mathcal{U}_{q,n}^{\mathcal{P}_n}).$$
\end{lemma}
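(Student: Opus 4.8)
The plan is to deduce the statement for $m<n$ directly from the machinery of Proposition~\ref{algoavecPinfty} together with Lemma-style bookkeeping on the sum of degrees. The key observation is that a recursive Chudnovsky-type algorithm over the projective line $\mathcal{U}_{q,n}^{\mathcal{P}_n}$ is entirely determined by the data $(Q,\mathcal{P}_n,\underline{u})$ with $\sum_{P\in\mathcal{P}_n}u_i\deg P_i=2n-2$ (in the notation of \eqref{condalg}), together with the recursively attached sub-algorithms for the non-rational places of $\mathcal{P}_n$; its bilinear complexity depends only on $\mathcal{P}_n$ and the $u_i$ and not on $Q$. So to build $\mathcal{U}_{q,m}^{\mathcal{P}_m}$ I would reuse essentially the same set of evaluation places and the same sub-algorithms, only shrinking the interpolation data to fit the smaller target degree $m$.

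Concretely, first I would fix $Q_m$ a place of degree $m$ of $\mathbb{F}_q(x)$ (which always exists over the projective line) and set $\mathcal{D}_m=(m-1)P_\infty$. Then I would choose $\mathcal{P}_m$ to be a sub-multiset of $\mathcal{P}_n$ (keeping $P_\infty$ and $P_0$) obtained by discarding places and/or lowering some of the multiplicities $u_i$ so that $\sum u_i\deg P_i$ drops from $2n-2$ down to exactly $2m-2$; this is possible because the quantity decreases in steps of size $\deg P_i\le \deg P_\infty=1$ when we peel off rational places, so every intermediate value, in particular $2m-2$, is attainable. Having done so, conditions \eqref{condalg} and $u_0<m-1$ (which we can guarantee by never letting $u_0$ exceed what it was in the original, or by trimming the multiplicity at $P_0$ if needed) are satisfied, and Proposition~\ref{algoavecPinfty} applies verbatim to give a polynomial-interpolation Chudnovsky-type algorithm $\mathcal{U}_{q,m}^{\mathcal{P}_m}$ for $\mathbb{F}_{q^m}$. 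For each non-rational place remaining in $\mathcal{P}_m$ we keep exactly the sub-algorithm already attached to it in $\mathcal{U}_{q,n}^{\mathcal{P}_n}$, so the construction is still recursive of the required type.

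For the complexity bound: by part~(2) of Proposition~\ref{algoavecPinfty}, $\mu(\mathcal{U}_{q,m}^{\mathcal{P}_m})=\sum_{i}\mu_q(\deg P_i,u_i^{(m)})+1$ where the sum is over the places kept in $\mathcal{P}_m$ and $u_i^{(m)}\le u_i$. Since $\mu_q(d,u)$ is non-decreasing in $u$ and trimming places only removes non-negative terms, each summand is bounded by the corresponding summand of $\mu(\mathcal{U}_{q,n}^{\mathcal{P}_n})=\sum_i\mu_q(\deg P_i,u_i)+1$, and dropped places contribute nothing; recursively, the sub-algorithm complexities are unchanged. Hence $\mu(\mathcal{U}_{q,m}^{\mathcal{P}_m})\le\mu(\mathcal{U}_{q,n}^{\mathcal{P}_n})$.

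The main obstacle I anticipate is purely combinatorial rather than conceptual: making sure that one can always reach the exact target $2m-2$ while simultaneously respecting the side constraint $u_0<m-1$ and keeping $P_\infty,P_0\in\mathcal{P}_m$. In the generic case this is immediate because $\mathcal{P}_n$ contains many rational places (each of degree $1$, giving step size $1$), so any target below $2n-2$ is hit; the only delicate point is a small base case — e.g. when the original $\mathcal{P}_n$ has very few rational places to spare, as in the $q=2$, $n=2$ situation of Section~\ref{quad}. There one may need to lower a multiplicity at $P_0$ instead of removing a place, but since $m<n$ forces $m-1<n-1$ the inequality $u_0^{(m)}<m-1$ can still be arranged, because we only ever need $u_0^{(m)}\le 2m-2$ worth of total degree at $P_0$ and can cap it strictly below $m-1$. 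I would handle this by phrasing the reduction as "remove rational places first, and only trim $u_0$ if strictly necessary," which makes the verification of the two constraints routine.
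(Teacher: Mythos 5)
Your overall strategy---reuse the places of $\mathcal{P}_n$ and their attached sub-algorithms for a degree-$m$ place $Q_m$ with divisor $\mathcal{D}_m=(m-1)P_\infty$---is sound, but the step where you shrink the interpolation data so that $\sum_i u_i\deg P_i$ equals \emph{exactly} $2m-2$ is a genuine gap. The decreases you can realize are sums of degrees of discarded places (or degrees times multiplicity decrements), and nothing in the hypotheses guarantees that $2(n-m)$ is such a sum: the lemma concerns an \emph{arbitrary} Chudnovsky-type algorithm over the projective line, so $\mathcal{P}_n$ may contain, say, only $P_\infty$, $P_0$ and places of degree $3$, all with multiplicity $1$; then a decrease of $2$ (the case $m=n-1$) is unattainable. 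Your assertion that the sum ``decreases in steps of size $\deg P_i\le \deg P_\infty=1$'' is false for non-rational places, and the combinatorial obstacle you set aside as a small base case is in fact the crux; as written, the argument does not close it.

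The missing idea---and the route the paper takes---is that you do not need the evaluation map on $\mathcal{L}(2\mathcal{D}_m)$ to be bijective: Theorem~\ref{Algogene} only requires $Ev_Q$ surjective and $Ev_{\mathcal P}$ injective, so no exact-degree condition has to be met. Keep $\mathcal{P}_m=\mathcal{P}_n$ unchanged, removing only $P_\infty$ if it occurs in $\mathcal{P}_n$ so that the support of $\mathcal{D}_m$ stays disjoint from $\mathcal{P}_m$; then $\sum_{P\in\mathcal{P}_m}\deg P\ge 2n-2>2m-1$, so the kernel of $Ev_{\mathcal{P}_m}$ sits in a Riemann--Roch space of a divisor of negative degree and $Ev_{\mathcal{P}_m}$ is injective, while $Ev_{Q_m}$ is an isomorphism because $\mathcal{L}((m-1)P_\infty)$ is the space of polynomials of degree less than $m$. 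Computing each coordinate of the Hadamard product with the same sub-algorithm as in $\mathcal{U}_{q,n}^{\mathcal{P}_n}$ then yields $\mu(\mathcal{U}_{q,m}^{\mathcal{P}_m})\le\mu(\mathcal{U}_{q,n}^{\mathcal{P}_n})$ directly, with no trimming at all; if you want to salvage your version via Proposition~\ref{algoavecPinfty}, you would have to prove the attainability of $2m-2$, which in general is simply not available.
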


\begin{proof}
Let $\mathcal{U}_{q,n}^{\mathcal{P}_n}$ be a Chudnovsky-type algorithm on the projective line for the multiplication in $\mathbb{F}_{q^n}$. We have $\sum_{P\in\mathcal{P}_n}\deg P = 2n-1>2m-1$. Set $\mathcal{P}_m=\mathcal{P}_n$. Without loss of generality, we can assume that the place at infinity is not in $\mathcal{P}_m$ (in fact, if $P_\infty\in\mathcal{P}_n$, we can consider $\mathcal{P}_m=\mathcal{P}_n\setminus\{P_\infty\}$, and $\sum_{P\in\mathcal{P}_m}\deg P = 2n-2>2m-1 $).
Then, we build the algorithm $\mathcal{U}_{q,m}^{\mathcal{P}_m}$ from Theorem \ref{Algogene}, with $Q$ a place of degree $m$ of $\mathbb{F}_q(x)$ and $\D=(m-1)P_\infty$. The map $E_Q$ gives an isomorphism between $\mathcal{L}(\D)$ and $\mathbb{F}_{q^m}$. Since $\sum_{P\in\mathcal{P}_m}\deg P>2m-1$, the evaluation map $Ev_{\mathcal{P}_m}:\mathcal{L}(2\D)\longrightarrow\mathbb{F}_q^N$ is injective.
The conditions of Theorem \ref{Algogene} are verified, and we obtain an algorithm $\mathcal{U}_{q,m}^{\mathcal{P}_m}$. Moreover, if we compute the multiplications of the evaluations at the places in $\mathcal{P}_m$ as in $\mathcal{U}_{q,n}^{\mathcal{P}_n}$, we obtain that $\mu(\mathcal{U}_{q,m}^{\mathcal{P}_m})\leq\mu(\mathcal{U}_{q,n}^{\mathcal{P}_n}).$

\end{proof}

Let us consider a convenient construction for the asymptotical study. For any integer $l$, recall that the places of $\mathbb{F}_q(x)$ of degree dividing $l$ correspond to the $q^l+1$ rational places of $\mathbb{F}_{q^l}(x)$, and that $B_k=B_k(\mathbb{F}_q(x))$ denotes the number of places of degree $k$ of $\mathbb{F}_q(x)$.
Let $d$ be the smallest integer $d$ such that $q^d\geq2n$, i.e. such that $d-1\leq\log_q(2n)\leq d$. Then, the sum $\sum_{k\mid d}kB_k=q^{d}+1$ is greater than $2n-1$.  We construct the set $\mathcal{P}$ in two steps. First, we include only places of degrees dividing $d$ by increasing degrees, while the sum of their degrees is lower than $2n-1$. In anticipation of a future calculation, we do not include one of the rational places. Then, the second step is to adjust the set to obtain a sum exactly equals to $2n-1$.
More precisely, consider
\begin{equation}\label{S}
S=(\sum_{k\mid d \atop k\neq d}kB_k) -1.
\end{equation}
The quantity $S$ is the sum of the degrees of the places over all places of degrees strictly dividing $d$, minus a rational place that we does not count in the calculation. Now, consider $\delta=2n-1-S \pmod{d}$. If $\delta=0$, then $\sum_{P\in\mathcal{P}} \deg P =2n-1$ after the first step, and the algorithm is done. 
Elsewhere, we have to increase this sum by $\delta$. If $\delta\nmid d$, then we add a place of degree $\delta$ in $\mathcal{P^\text{div}}$. If $\delta\mid d$, the places of degree $\delta$ are already in $\mathcal{P}$. For $\ell$ the largest divisor of $d$ non equal to $d$, we adjust $\mathcal{P}$ by adding a place of degree $\ell+\delta$ and by removing a place of degree $\ell$.
The correctness of this construction will be proven in Proposition \ref{lemmePdiv}.
The set obtained is denoted by $\mathcal{P}^\text{div}$, and its construction is formalized in the following algorithm.  

\begin{algorithm}[H]
\caption{Construction of $\mathcal{P}^{\text{div}}$}\label{Pdiv}
\begin{algorithmic}
\Require $q,~ n>\frac{1}{2}q+1$. 
\Ensure  $\mathcal{P}^{\text{div}}$.
        \begin{enumerate}
          
          \item[(i)] Let $d$ be the smallest integer greater than $\log_q(2n)$, set $S=(\sum_{k\mid d \atop k\neq d}kB_k) -1$, and let $\ell$ be the greatest non-trivial divisor of $d$. Start to construct $\mathcal{P}^{\text{div}}$ including $N_k$ places of degree $k$, for all $k$ dividing $d$, with $N_k$ as follows:
    $$
N_k = \left\{
    \begin{array}{ll}
        \lfloor \frac{1}{d}(2n-1-{S})\rfloor & \mbox{if } k=d, \\
        q & \mbox{if } k=1, \\
        B_k & \mbox{if } k\mid d \mbox{ and } k\neq 1,d, \\
        0 & \mbox{elsewhere};\end{array}\right.
    $$
        \item[(ii)]  Consider $\delta=2n-1-S\pmod{d}$. Then,
        \begin{itemize}
            \item If $\delta=0$, the algorithm is done;
            \item If $\delta\nmid d$, add to $\mathcal{P}^{\text{div}}$ a place of degree $\delta$;
            \item If $0\neq\delta\mid d$ add to $\mathcal{P}^{\text{div}}$ a place of degree $\ell+\delta$ and remove a place of degree $\ell$.
        \end{itemize}
        \end{enumerate}
\end{algorithmic}
\end{algorithm}
\begin{definition}
We denote by $\mathcal{U}^{\mathcal{P}^{\text{div}}}_{q,n}$ a recursive Chudnovsky-type algorithm on the projective line for the multiplication in $\mathbb{F}_{q^n}$, with $\mathcal{P}^{\text{div}}$ given by Algorithm \ref{Pdiv}.
\end{definition}

\begin{remark}
For the asymptotical study, we can construct RPGC Chudnovsky-type algorithms with $\mathcal{P}^{\text{div}}$, and use the algorithms from the previous section in the recursion. For instance, we can use the bounds of Table \ref{Tablebilcompderiv}. 
\end{remark}

\begin{proposition}\label{lemmePdiv}
Let $q$ be a prime power, $n$ be an integer greater than $\frac{1}{2}q+1$ and $Q$ a place of degree $n$ of $\mathbb F_q(x)$. The recursive Chudnovsky-type algorithm over the projective line $\mathcal{U}_{q,n}^{\mathcal{P}^{\text{div}}}(Q)$ has bilinear complexity 
$$\mu(\mathcal{U}^{\mathcal{P}^{\text{div}}}_{q,n})	\leq \sum_{k\mid d}\frac{q^k}{k}\mu(\mathcal{U}^{\mathcal{P}_k}_{q,k}),$$
where $d$ is the smallest integer greater than $\log_q(2n),$ and $\mathcal{U}^{\mathcal{P}_k}_{q,k}$ is a Chudnovsky-type algorithm on the projective line for the multiplication in $\mathbb{F}_{q^k}$.
\end{proposition}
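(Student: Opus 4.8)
The plan is to apply the recursive bilinear-complexity formula of Proposition~\ref{genericalgo} to the set $\mathcal{P}^{\text{div}}$ produced by Algorithm~\ref{Pdiv}, and then to bound the resulting weighted sum $\sum_k N_k\,\mu(\mathcal{U}^{\mathcal{P}_k}_{q,k})$ term by term by the cleaner quantity $\sum_{k\mid d}\frac{q^k}{k}\mu(\mathcal{U}^{\mathcal{P}_k}_{q,k})$. First I would verify the \emph{correctness} of the construction, i.e. that after step (ii) one genuinely has $\sum_{P\in\mathcal{P}^{\text{div}}}\deg P = 2n-1$: in step~(i) the partial sum is $S + d\lfloor\frac{1}{d}(2n-1-S)\rfloor$, which differs from $2n-1$ by exactly $\delta = (2n-1-S)\bmod d$; the three cases of step~(ii) ($\delta=0$; add a place of degree $\delta$ when $\delta\nmid d$; swap a degree-$\ell$ place for a degree-$(\ell+\delta)$ one when $\delta\mid d$) each correct the sum by precisely $\delta$. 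Here one must check that the places invoked actually exist in $\mathbb{F}_q(x)$: a degree-$\delta$ place exists because $\delta<d$ and $d$ was chosen minimal with $q^d\geq 2n$, so the genus-$0$ count $B_\delta>0$; a degree-$(\ell+\delta)$ place exists because $\ell+\delta\le 2\ell\le 2(d-1)$ combined with a similar counting estimate, using that $q^{\ell+\delta}+1$ exceeds the number of places of degree strictly dividing $\ell+\delta$; and there is a degree-$\ell$ place to remove since $\ell\mid d$ and $\ell\ne d$ means all $B_\ell$ such places were included in step~(i). By Corollary~\ref{coroalgosansED}, condition~\eqref{condsansed} is then met and Proposition~\ref{algoavecPinfty} yields the algorithm $\mathcal{U}_{q,n}^{\mathcal{P}^{\text{div}}}(Q)$.

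Next I would apply Proposition~\ref{genericalgo}: $\mu(\mathcal{U}^{\mathcal{P}^{\text{div}}}_{q,n}) = \sum_k N_k\,\mu(\mathcal{U}^{\mathcal{P}_k}_{q,k})$, where $N_k$ is the number of degree-$k$ places in $\mathcal{P}^{\text{div}}$. By construction $N_k=0$ whenever $k\nmid d$ \emph{except} possibly for the single extra place of degree $\delta$ (resp.\ $\ell+\delta$) adjoined in step~(ii); all the bulk contributions come from the divisors $k\mid d$, where $N_k\le B_k\le\frac{q^k}{k}$ for $k<d$ (using the genus-$0$ formula \eqref{nbplaces}, $B_k=\frac1k(q^k+1-\sum_{k'\mid k,\,k'\ne k}k'B_{k'})\le\frac{q^k}{k}$, since the subtracted sum is at least $q\ge 1$), and $N_d=\lfloor\frac1d(2n-1-S)\rfloor\le\frac{q^d}{d}$ because $2n-1-S\le 2n\le q^d$. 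The remaining issue is the single adjustment place of degree $\delta$ or $\ell+\delta$, which is \emph{not} of a degree dividing $d$; I would absorb its cost into the sum over divisors by the following trick: rather than adding a fresh place, observe that for the purpose of the bound one may instead increase $N_d$ by one (replacing the degree-$\delta$ place and, if needed, extra rational places by one more degree-$d$ place has $\sum\deg P\ge 2n-1$, and Lemma~\ref{majojo} lets one pass from that algorithm back to a valid one for $\mathbb{F}_{q^n}$ without increasing the complexity). After this reduction every nonzero $N_k$ satisfies $k\mid d$ and $N_k\le\frac{q^k}{k}$, whence $\mu(\mathcal{U}^{\mathcal{P}^{\text{div}}}_{q,n})\le\sum_{k\mid d}\frac{q^k}{k}\mu(\mathcal{U}^{\mathcal{P}_k}_{q,k})$.

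I expect the \textbf{main obstacle} to be precisely the bookkeeping around the step-(ii) adjustment place: it has a degree not dividing $d$, so it escapes the clean $k\mid d$ sum, and one must either show its sub-algorithm cost $\mu(\mathcal{U}^{\mathcal{P}_\delta}_{q,\delta})$ (or $\mu(\mathcal{U}^{\mathcal{P}_{\ell+\delta}}_{q,\ell+\delta})$) is dominated by one of the retained terms $\frac{q^k}{k}\mu(\mathcal{U}^{\mathcal{P}_k}_{q,k})$ with $k\mid d$, or reorganize the set (as sketched above via Lemma~\ref{majojo}) so that it never appears in the final estimate. A secondary, more routine point is confirming the existence of all the places the algorithm calls for; this is a direct consequence of $d$ being chosen minimal with $q^d\ge 2n$ together with the genus-$0$ place-count \eqref{nbplaces}, and requires only elementary inequalities.
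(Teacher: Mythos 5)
Your proposal follows the same route as the paper's proof (verify that Algorithm~\ref{Pdiv} really outputs a set of total degree $2n-1$, decompose the complexity via Proposition~\ref{genericalgo}, absorb the cost of the single adjustment place into the degree-$d$ term via Lemma~\ref{majojo}, then bound $N_k\le B_k\le q^k/k$), but it omits the one inequality on which both halves of that argument hinge, namely $N_d<B_d$ for the count $N_d=\lfloor\frac1d(2n-1-S)\rfloor$ produced by step (i) (the paper's inequality (\ref{NdBd})). First, your correctness check misses the sub-case $\delta=\ell=d/2$, in which the adjustment place has degree $\ell+\delta=d$; your counting argument only shows $B_{\ell+\delta}>0$, i.e. that some place of that degree exists in $\mathbb{F}_q(x)$, whereas here one must know that a degree-$d$ place remains \emph{unused} after step (i), which is exactly $N_d<B_d$. (Your later remark that the adjustment place is ``not of a degree dividing $d$'' is false precisely in this case.) Second, in the complexity bound your absorption trick effectively replaces $N_d$ by $N_d+1$ (or, in the paper's phrasing, adds $\Delta\le\mu(\mathcal{U}^{\mathcal{P}_d}_{q,d})$ to the degree-$d$ term), so you need $N_d+1\le q^d/d$, or at least $N_d\mu(\mathcal{U}^{\mathcal{P}_d}_{q,d})+\Delta\le\frac{q^d}{d}\mu(\mathcal{U}^{\mathcal{P}_d}_{q,d})$; the estimate you actually prove, $N_d\le q^d/d$ from $2n-1-S\le 2n\le q^d$, leaves no room for the extra unit, so the final line ``every nonzero $N_k$ satisfies $N_k\le\frac{q^k}{k}$'' is asserted rather than established for $k=d$.

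The missing step is exactly why the paper subtracts one rational place in the definition of $S$ in (\ref{S}): since $\sum_{k\mid d}kB_k=q^d+1\ge 2n+1$, having $N_d=B_d$ would make the degree sum after step (i) at least $q^d\ge 2n$, contradicting that it is at most $2n-1$; equivalently $2n-1-S\le dB_d-1$, so $N_d\le B_d-1$. With this, both the existence of the extra degree-$d$ place in the case $\ell+\delta=d$ and the absorption $N_d\mu(\mathcal{U}^{\mathcal{P}_d}_{q,d})+\Delta\le B_d\mu(\mathcal{U}^{\mathcal{P}_d}_{q,d})\le\frac{q^d}{d}\mu(\mathcal{U}^{\mathcal{P}_d}_{q,d})$ go through, and the rest of your argument is fine. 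A minor secondary slip: $B_1=q+1>q^1/1$, so ``$N_k\le B_k\le q^k/k$'' fails for $k=1$; one must instead use that the algorithm takes $N_1=q$ by construction, as the paper does.
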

\begin{proof}
We have to check that $\mathcal{P}^{\text{div}}$ is well defined.
After the first step, $\mathcal{P}^{\text{div}}$ contains only places of degree dividing $d$. Consider the quantities $\delta$, $S$ and $\ell$ from Algorithm \ref{Pdiv}. If $\delta=0$ the algorithm is done. If $\delta \nmid d$, we can add a place of degree $\delta$ in $\mathcal{P}^{\text{div}}$. If $0\neq\delta \mid d$, we have to argue differently since the places of degree $\delta$ are already in $\mathcal{P}^{\text{div}}$. We can always add a place of degree $\ell+\delta$ since either it does not divide $d$ or it is equal to $d$.  
In fact, $\ell$ is lower than or equal to $d/2$ and $\delta\leq d/2$ as well. Hence, $\ell<\ell+\delta\leq d$. 
Suppose that $\ell+\delta\neq d$. Since $\ell$ is the greatest non-trivial divisor of $d$, $\ell+\delta$ does not divide $d$, and we can add a place of degree $\ell+\delta$ in $\mathcal{P}^{\text{div}}$. 
If $\delta=\ell=d/2$, then $\delta+\ell=d$, and we have to verify that the number $N_d$ of places of degree $d$ used after the first step is strictly lower than $B_d$, so that one can add a place of degree $d$.
Let us show that $N_d<B_d$. After step $(i)$, the sum of the degrees of the places in $\mathcal{P}^{\text{div}}$ is equal to $\sum_{k \mid d}dN_d=S+d\lfloor \frac{1}{d}(2n-1-{S})\rfloor\leq 2n-1$.
Suppose that $N_d=B_d$, i.e. all places of degree $d$ are included after step $(i)$. The condition $d\geq\log_q(2n)$ attests that $\sum_{k\mid d}kB_k=q^d+1$ is greater than $2n+1$. 
After step $(i)$, the sum of the degrees of the places is hence greater than $2n$, since we do not count a rational place.
But this sum has to be lower than or equal to $2n-1$, which gives a contradiction.
Finally, the number $N_d$ of places of degree $d$ after the first step is
\begin{equation}\label{NdBd}
N_d=\lfloor\frac{1}{d}(2n-1-{S})\rfloor<B_d,
\end{equation}
and we can add a place of degree $d$. 
Now, Algorithm \ref{Pdiv} is complete, and we denote by $N_k^{\text{div}}$ the number of places of degree $k$ in $\mathcal{P}^{\text{div}}$ after $(i)$ and $(ii)$. It remains to verify that the sum of the $kN_k^{\text{div}}$ is equal to $2n-1$. Thanks to the euclidean division of $2n-1-{S}$ by $d$, we have 
$$\sum_{P\in\mathcal{P}^{\text{div}}}\deg P=\sum kN_k^{\text{div}}=S+\delta+d\lfloor\frac{1}{d}(2n-1-{S})\rfloor=2n-1.$$

Hence, $\mathcal{P}^{\text{div}}$ is well defined, and we obtain a recursive Chudnovsky-type algorithm over the projective line. Its bilinear complexity verifies
$$\mu(\mathcal{U}^{\mathcal{P}^{\text{div}}}_{q,n})=\sum_{k}N_k^{\text{div}}\mu(\mathcal{U}^{\mathcal{P}_k}_{q,k})=\sum_{k\mid d}N_k^{\text{div}}\mu(\mathcal{U}^{\mathcal{P}_k}_{q,k})+\Delta,$$
where $\Delta= \left\{
    \begin{array}{ll}
    \mu(\mathcal{U}^{\mathcal{P}_\delta}_{q,\delta}) & \text{if }\delta\nmid d, \\ 
    \mu(\mathcal{U}^{\mathcal{P}_{\ell+\delta}}_{q,\ell+\delta}) & \text{if } 0\neq\delta\mid d\text{ and }\ell+\delta\neq d,
    \\  0 & \text{if }\delta=0\text{ or }\ell+\delta=d.\end{array}\right. $

Recall that $\ell\leq\ell+\delta\leq d$. Thanks to Lemma \ref{majojo}, we define $\mathcal{U}^{\mathcal{P}_{\delta}}_{q,\delta}$ (resp. $\mathcal{U}^{\mathcal{P}_{\ell+\delta}}_{q,\ell+\delta}$) using ${\mathcal{P}_{\delta}}={\mathcal{P}_{d}}$ and ${\mathcal{P}_{\ell+\delta}}={\mathcal{P}_{d}}$. Hence, $\mu(\mathcal{U}^{\mathcal{P}_{d}}_{q,\delta})\leq\mu(\mathcal{U}^{{\mathcal{P}_{d}}}_{q,d})$,
and $\mu(\mathcal{U}^{{\mathcal{P}_{d}}}_{q,\ell+\delta})\leq\mu(\mathcal{U}^{{\mathcal{P}_{d}}}_{q,d})$.
If $\Delta\neq0$, the inequality (\ref{NdBd}) attests that $N_d^{\text{div}}<B_d$. 
Since $\Delta\leq\mu(\mathcal{U}^{\mathcal{P}_{d}}_{q,d})$, we have that $N_d^{\text{div}}\mu(\mathcal{U}^{\mathcal{P}_d}_{q,d})+\Delta\leq B_d\mu(\mathcal{U}^{\mathcal{P}_{d}}_{q,d})$. 
For all $k$, we have $N_k^{\text{div}}\leq B_k$. Moreover, for all $k\geq2$ we have $B_k\leq\frac{q^k}{k}$. Recalling that $N_1^{\text{div}}\leq q$, we obtain 
$$\mu(\mathcal{U}^{\mathcal{P}^{\text{div}}}_{q,n})	\leq \sum_{k\mid d}\frac{q^k}{k}\mu(\mathcal{U}^{\mathcal{P}_{k}}_{q,k}).$$

\end{proof}

 \subsection{Bound for the bilinear complexity of RPGC}

Our bound for the bilinear complexity requires to introduce the iterated logarithm.

\begin{definition}\label{logstar}
For all integer $n$, the iterated logarithm of $n$, denoted by $ \log^*(n)$,  is defined by the following recursive function:

$$
\log^*(n) = \left\{
    \begin{array}{ll}
        0 & \mbox{if } n \leq 1 \\
        1 + \log^*(\log(n)) & \mbox{elsewhere.}
    \end{array}
\right.
$$

\noindent
This value corresponds to the number of times the logarithm is iteratively applied from $n$ in order to obtain a result lower than or equal to 1. 
\end{definition}
To get a uniform bound, we have to specify the basis of the logarithm. Here, we will use $q$ and finally $\sqrt{q}$ as basis. When the basis of the logarithm is a real greater than $e^\frac{1}{e}$, this function is well defined. But the iterated logarithm can be defined with any basis $a$, for a real $a$ strictly greater than 1. Actually, we will deal with the case of the basis $\sqrt{2}$, which is between 1 and $e^\frac{1}{e}\simeq1,44467\ldots$.
For $1<a<e^\frac{1}{e}$, there exists $x_0>1$ such that $x_0= \sup \{x\mid x=\log_a(x)\}$. Hence, the values obtained by applying successively $log_a$ to $x\geq x_0$ converge to $x_0>1$, and the stopping step of the function defined above cannot be reached. We can define $\log_a^*$ for any $a$ between 1 and $e^\frac{1}{e}$ by changing the stopping step to $\log_a^*(n)=0$ if $n\leq\lfloor x_0\rfloor +1$. For $a={\sqrt{2}}$, this number is $4=\log_{\sqrt{2}}(4)$, and the iterated logarithm is given by
\begin{equation}\label{logstarsqrt2}
\log_{\sqrt{2}}^*(n) = \left\{
    \begin{array}{ll}
        0 & \mbox{if } n \leq 5 \\
        1 + \log_{\sqrt{2}}^*(\log_{\sqrt{2}}(n)) & \mbox{elsewhere.}
    \end{array}
\right.
\end{equation}
These are very slow-growing functions (\cite{babotu}, Table II). The study of the bound also requires the following lemma:

\begin{lemma}\label{lemma}
For all prime power $q\geq2$ and all integer $d\geq1$, we have $dq^{\frac{d}{2}+1}-q^{d}\leq q^{d+1}$.
\end{lemma}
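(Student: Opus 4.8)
The plan is to reduce the claimed inequality to its worst case $q=2$ and then to a one-variable estimate settled by a short monotonicity argument. First I would divide both sides of $dq^{d/2+1}-q^{d}\le q^{d+1}$ by the positive quantity $q^{d/2+1}$, which rewrites the inequality in the equivalent form
\[
d\ \le\ q^{\frac{d}{2}-1}(q+1).
\]
Thus it suffices to bound the function $q\mapsto q^{\frac{d}{2}-1}(q+1)$ below by $d$, for every fixed integer $d\ge 1$ and every prime power $q\ge 2$.

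Next I would observe that, for each fixed $d\ge 1$, the right-hand side $q^{\frac{d}{2}-1}(q+1)$ is non-decreasing in $q$ on $[1,\infty)$: for $d\ge 2$ both factors $q^{\frac{d}{2}-1}$ and $q+1$ are non-decreasing, while for $d=1$ one has $q^{-1/2}(q+1)=\sqrt q+1/\sqrt q$, whose derivative $\tfrac12 q^{-3/2}(q-1)$ is non-negative for $q\ge 1$. Hence the minimum over all prime powers $q\ge 2$ is attained at $q=2$, and it is enough to prove $d\le 3\cdot 2^{\frac{d}{2}-1}$ for all $d\ge 1$, equivalently $\dfrac{2d}{(\sqrt 2)^{d}}\le 3$.

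Finally I would establish this last inequality by setting $h(d)=2d/(\sqrt 2)^{d}$ and noting that $h(d+1)/h(d)=\dfrac{d+1}{d\sqrt 2}$, which is $<1$ precisely when $d>\dfrac{1}{\sqrt 2-1}=\sqrt 2+1$, i.e. for all $d\ge 3$. So $h$ is decreasing on $\{3,4,5,\dots\}$, giving $h(d)\le h(3)=3/\sqrt 2<3$ for $d\ge 3$; combined with the direct checks $h(1)=\sqrt 2<3$ and $h(2)=2<3$, this yields $h(d)\le 3$ for all $d\ge 1$, which completes the proof. There is no genuine obstacle here; the only point requiring mild care is the case $d=1$, where the exponent $\frac{d}{2}-1$ is negative, so the monotonicity-in-$q$ claim must be verified separately there (as above) before invoking the reduction to $q=2$.
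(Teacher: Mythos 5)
Your proof is correct. The opening reduction is the same as the paper's: dividing through (equivalently, moving $q^d$ to the right and factoring) turns the claim into $d\le q^{\frac{d}{2}-1}(q+1)$, which is exactly the paper's $d\le q^{\frac{d-2}{2}}(q+1)$. After that the two arguments diverge. The paper fixes $q$, takes logarithms, and studies $f_q(d)=d\ln q-2\ln q-2\ln d+2\ln(q+1)$ as a function of a real variable $d\ge 1$: its minimum sits at $d=2/\ln q$, which forces a case split into $q\ge 8$ (where $f_q$ is increasing on $[1,\infty)$ and one checks $f_q(1)\ge 0$) and $q\le 7$ (where the minima must be checked individually, a numerical verification the paper only asserts). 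You instead fix $d$, prove monotonicity in $q$ of $q^{\frac{d}{2}-1}(q+1)$ (with the needed separate check at $d=1$, where the exponent is negative), reduce everything to the single worst case $q=2$, and finish with the discrete ratio test on $h(d)=2d/(\sqrt2)^d$, whose maximum over integers is $h(3)=3/\sqrt2<3$. What your route buys is the elimination of both the calculus in $d$ and the residual finite case analysis over $q\in\{2,3,4,5,7\}$: every step is an explicit elementary verification, and the argument in fact proves the inequality for all real $q\ge 2$, not just prime powers (as does the paper's, but yours makes the $q=2$ extremality transparent). The paper's route, by treating $d$ as the variable, keeps a uniform statement for each $q$ at the price of the small-$q$ checks.
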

\begin{proof}
Let $q$ be a prime power and $d\geq1$ an integer. The inequality above is equivalent to $d\leq q^{\frac{d-2}{2}}(q+1)$, and then to $\log_q(\frac{d}{q+1})\leq \frac{d-2}{2}.$
Hence we have to show that $d-2-2\log_q(d)+2\log_q(q+1)\geq0.$
We can write this inequality
$d\ln{q}-2\ln{q}-2\ln{d}+2\ln{(q+1)}\geq0$.
For all $d\in\mathopen{[}1\,;+\infty\mathclose{[}$, we define $$f_q(d)=d\ln{q}-2\ln{q}-2\ln{d}+2\ln{(q+1)}.$$
This function is derivable over $\mathopen{[}1\,;+\infty\mathclose{[}$ and its derivative is
$f_q'(d)=\ln{q}-\frac{2}{d}$. Hence $f_q$ has a minimum in $\frac{2}{\ln{q}}$. For $q\geq8$, $\frac{2}{\ln{q}}\leq1$, so $f_q$ is growing over $\mathopen{[}1\,;+\infty\mathclose{[}$ and its minimum is given by
$$f_q(1)=\ln{q}-2\ln{q}+2\ln{(q+1)}\geq 0.$$
For $q\leq7$, $f_q$ reaches its minimum in $\frac{2}{\ln{q}}>1$ and it can be checked that these minima are positive.
Then, the inequality of the lemma is holding for all prime power $q$ and all $d\geq1. $
\end{proof}
We can finally state the following result:
\begin{theorem}\label{g0}
Let $q$ be a prime power and $n\geq2$ a positive integer. Then, there exists a recursive Chudnovsky-type algorithm $\mathcal{U}^{\mathcal{P}}_{q,n}$ over the projective line for the multiplication in $\mathbb{F}_{q^n}$ over $\mathbb{F}_q$ with a uniform upper bound for its bilinear complexity
$$\mu(\mathcal{U}^{\mathcal{P}}_{q,n})\leq Cn\left(\frac{4q^2}{(q-1)}\right)^{\log_{\sqrt{q}}^*(2n)},$$
where $C=1$ for $q\geq3$ and $C=\frac{14}{5}$ for $q=2$.
\end{theorem}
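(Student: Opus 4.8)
The plan is to take for $\mathcal{U}^{\mathcal{P}}_{q,n}$ the recursive algorithm $\mathcal{U}^{\mathcal{P}^{\text{div}}}_{q,n}$ produced by Algorithm \ref{Pdiv}, in which each intermediate multiplication in a proper subextension is again computed by an algorithm of the same kind, down to a threshold $n_0=n_0(q)$ below which one falls back on the polynomial interpolation algorithms of Section \ref{smallext} and Section \ref{degreecroissant} (whose bilinear complexity is linear in the degree, and is exactly $2m-1$ when $m\le\frac12 q+1$). The first step is to isolate a one–level inequality. Let $n>n_0$ and let $d$ be the smallest integer with $d>\log_q(2n)$, as in Algorithm \ref{Pdiv}. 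Proposition \ref{lemmePdiv} together with Lemma \ref{majojo} — used to realise every intermediate multiplication in $\mathbb F_{q^k}$, $k\mid d$, as a restriction of the degree‑$d$ algorithm, so that all the branches collapse onto the single term of degree $d$ — gives
$$\mu(\mathcal{U}^{\mathcal{P}^{\text{div}}}_{q,n})\ \le\ \Bigl(\,\sum_{k\mid d}\frac{q^k}{k}\,\Bigr)\;\mu(\mathcal{U}^{\mathcal{P}^{\text{div}}}_{q,d}).$$
To estimate the divisor sum, I would split off the term $k=d$ and bound the contribution of the proper divisors of $d$ — all of which are $\le d/2$ — by the geometric sum $\sum_{j\le d/2}q^{j}\le q^{d/2+1}/(q-1)$, obtaining $\sum_{k\mid d}q^k/k\le q^d/d+q^{d/2+1}/(q-1)$. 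Lemma \ref{lemma}, applied to this $d$, is exactly the inequality that turns the right‑hand side into at most $2q^{d+1}/(d(q-1))$; and since $d>\log_q(2n)$ forces $q^{d-1}\le 2n$, hence $q^{d+1}\le 2q^2n$, the one‑level inequality becomes
$$\mu(\mathcal{U}^{\mathcal{P}^{\text{div}}}_{q,n})\ \le\ \frac{4q^2}{q-1}\cdot\frac{n}{d}\cdot\mu(\mathcal{U}^{\mathcal{P}^{\text{div}}}_{q,d}).$$

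Next I would iterate this. Set $n^{(0)}=n$ and let $n^{(i+1)}$ be the smallest integer $>\log_q(2n^{(i)})$; this produces a strictly decreasing chain $n=n^{(0)}>n^{(1)}>\dots>n^{(L)}$ that reaches the threshold, $n^{(L)}\le n_0$. Multiplying the one‑level inequalities along the chain, the quotients $n^{(i)}/n^{(i+1)}$ telescope and one gets
$$\mu(\mathcal{U}^{\mathcal{P}^{\text{div}}}_{q,n})\ \le\ \Bigl(\frac{4q^2}{q-1}\Bigr)^{L}\,\frac{n}{n^{(L)}}\;\mu(\mathcal{U}^{\mathcal{P}^{\text{div}}}_{q,n^{(L)}})\ \le\ \Bigl(\frac{4q^2}{q-1}\Bigr)^{L}\,C\,n,$$
using for the last step the base‑case bound $\mu(\mathcal{U}^{\mathcal{P}^{\text{div}}}_{q,n^{(L)}})\le C\,n^{(L)}$, which is checked by hand on the finitely many small extensions and is precisely where the value $C=\frac{14}{5}$ (instead of $C=1$) is forced for $q=2$. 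It therefore only remains to prove $L\le\log^*_{\sqrt q}(2n)$. Writing $m_i=2n^{(i)}$, the recursion reads $m_{i+1}\le\log_{\sqrt q}(m_i)+2$, because $n^{(i+1)}\le\log_q(2n^{(i)})+1$ and $2\log_q=\log_{\sqrt q}$; so each level is one application of $\log_{\sqrt q}$ to the quantity $2n$, up to an additive error $\le2$. A short induction on $i$ shows that $m_i$ never exceeds the $i$‑fold iterate of $\log_{\sqrt q}$ evaluated at $2n$ by more than a fixed additive constant, so the chain drops below the stopping threshold of $\log^*_{\sqrt q}$ — the value $5$ for $q=2$ by \eqref{logstarsqrt2}, the natural one for $q\ge3$ — within $\log^*_{\sqrt q}(2n)$ steps. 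Feeding $L\le\log^*_{\sqrt q}(2n)$ into the telescoped inequality gives the claimed bound, since $\frac{4q^2}{q-1}>1$ makes increasing the exponent only weaken it.

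The main obstacle is this last point: rigorously controlling the recursion depth $L$ by $\log^*_{\sqrt q}(2n)$, i.e. keeping the $+2$ drift per level under control against the stopping threshold of the iterated logarithm, and checking that the small‑degree base cases fit under $C\,n$ with the stated $C$ — which is why $q=2$ and $q\ge3$ must be treated with different constants. The choice of base $\sqrt q$ for $\log^*$ is dictated by the identity $2\log_q=\log_{\sqrt q}$, which is what makes one recursion step correspond to one iterate of $\log_{\sqrt q}$; and Lemma \ref{lemma} is tailor‑made to collapse the divisor sum $\sum_{k\mid d}q^k/k$ into the clean per‑level factor $\frac{4q^2}{q-1}$. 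Everything else — the geometric‑sum estimate, the telescoping, and the manipulations using $q^{d-1}\le 2n$ — is routine.
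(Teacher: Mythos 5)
Your proposal is correct and follows essentially the same route as the paper's proof: the one-level bound from Proposition \ref{lemmePdiv} combined with Lemma \ref{majojo}, the geometric-sum estimate collapsed via Lemma \ref{lemma} into the factor $\frac{4q^2}{q-1}\cdot\frac{n}{d}$ using $q^{d-1}\le 2n$, the telescoping along the chain $d_1,d_2,\dots$, the identification of the recursion depth with $\log^*_{\sqrt q}(2n)$ via $2\log_q=\log_{\sqrt q}$, and the base case $\mu(\mathcal{U}_{2,5})=14$ forcing $C=\tfrac{14}{5}$ for $q=2$. If anything, you are slightly more explicit than the paper about controlling the additive drift coming from rounding $\log_q(2d_{j-1})$ up to an integer at each level, a point the paper passes over silently.
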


\begin{proof}\label{bilboundproof}
When $n\leq\frac{1}{2}q+1$, the construction of section \ref{smallext} showed we can construct a Chudnovsky-type algorithm of optimal bilinear complexity over the project line, and the bound is verified. For $n>\frac{1}{2}q+1$, We construct the set $\mathcal{P}=\mathcal{P}^{\text{div}}$ with Algorithm \ref{Pdiv} and use RPGC. Then, Lemma \ref{majojo} and Proposition \ref{lemmePdiv} give

\[\begin{aligned}
\mu(\mathcal{U}^{\mathcal{P}^{\text{div}}}_{q,n})	&\leq \sum_{k\mid d_1}\frac{q^k}{k}\mu(\mathcal{U}^{\mathcal{P}^{\text{div}}}_{q,k})
\\		&\leq \left(\frac{q^{d_1}}{d_1}+\sum_{k=1}^{d_1/2}\frac{q^k}{k}\right)\mu(\mathcal{U}^{\mathcal{P}^{\text{div}}}_{q,d_1})
\\			&\leq \left(\frac{q^{d_1}}{d_1}+\frac{q^{\frac{d_1}{2}+1}-1}{q-1}\right)\mu(\mathcal{U}^{\mathcal{P}^{\text{div}}}_{q,d_1})
\\          &= \left(\frac{q^{d_1+1}-q^{d_1}+d_1q^{\frac{d_1}{2}+1}-d_1}{d_1(q-1)}\right)\mu(\mathcal{U}^{\mathcal{P}^{\text{div}}}_{q,d_1}).
\end{aligned}\]
Thanks to Lemma \ref{lemma}, for all $q\geq2$ and all $d_1\geq1$, we have $d_1q^{\frac{d_1}{2}+1}-q^{d_1}\leq q^{d_1+1}$. Hence,
\[\begin{aligned}
\mu(\mathcal{U}^{\mathcal{P}^{\text{div}}}_{q,n})	&\leq \left(\frac{2q^{d_1+1}}{d_1(q-1)}\right)\mu(\mathcal{U}^{\mathcal{P}^{\text{div}}}_{q,d_1})
\\			&\leq \left(\frac{4q^2n}{d_1(q-1)}\right)\mu(\mathcal{U}^{\mathcal{P}^{\text{div}}}_{q,d_1}),
\end{aligned}\]
because $d_1$ is such that $d_1-1<\log_q(2n)\leq d_1$.

We can similarly give a bound for $\mu(\mathcal{U}_{q,d_1})$, using $d_2-1<\log_q(2d_1)\leq d_2$, and $\mathcal P ^{\text{div}}$ for the extension of degree $d_1$. It follows that
$$\mu(\mathcal{U}^{\mathcal{P}^{\text{div}}}_{q,d_1})\leq \left(\frac{4q^2d_1}{d_2(q-1)}\right)\mu(\mathcal{U}^{\mathcal{P}^{\text{div}}}_{q,d_2}).$$
and hence, 
$$\mu(\mathcal{U}^{\mathcal{P}^{\text{div}}}_{q,n})\leq \frac{n}{d_2}\left(\frac{4q^2}{(q-1)}\right)^2\mu(\mathcal{U}^{\mathcal{P}^{\text{div}}}_{q,d_2}).$$
Doing $i$ times this process, we get
$$\mu(\mathcal{U}^{\mathcal{P}^{\text{div}}}_{q,n})\leq \frac{n}{d_i}\left(\frac{4q^2}{(q-1)}\right)^i\mu(\mathcal{U}^{\mathcal{P}^{\text{div}}}_{q,d_i}),$$
with $d_j-1<\log_q(2d_{j-1})\leq d_j$, for all $j$ between 2 and $i$. We now have to find $i$ such that the recursion stops. We want $d_i<1$,
but $d_i>\log_q(2d_{i-1})$, then it suffices that $\log_q(2d_{i-1})<1$. Actually, for $2\leq j\leq i$, we have $\log_q(2d_{j-1})<d_j$. Hence, we are looking for $i$ such that 
$$\underbrace{\log_q(2\log_q(\ldots(2\log_q}_{i~terms}(2n))\ldots))<1.$$
Notice that for all $a\in\mathbb{R}_+^*$ and $q>1$, $\log_{\sqrt{q}}(a)=2\log_q(a)$. Then, 
$$\underbrace{\log_q(2\log_q(\ldots(2\log_q}_{i~terms}(2n))\ldots))=\frac{1}{2}\underbrace{\log_{\sqrt{q}}(\log_{\sqrt{q}}(\ldots(\log_{\sqrt{q}}}_{i~terms}(2n))\ldots)).$$
First, we consider $q\geq3$. By Definition \ref{logstar} of $\log^*$, $i=\log_{\sqrt{q}}^*(2n)$ is convenient. We finally get the bound

$$\mu(\mathcal{U}^{\mathcal{P}^{\text{div}}}_{q,n})\leq n\left(\frac{4q^2}{(q-1)}\right)^{\log_{\sqrt{q}}^*(2n)}\hbox{, for }q\geq3.$$

For $q=2$, we process similarly using the iterated logarithm with basis $\sqrt{2}$ as in \eqref{logstarsqrt2}, so we stop the recursion at $n\leq5$. We can use the algorithms given in Section 4 for these small extensions. From Table \ref{Tablebilcompderiv}, recall that $\mu(\mathcal{U}^{\mathcal{P},\underline{u}}_{2,5})=14$. For $q=2$ and $n\geq5$ we obtain the bound
$$\mu(\mathcal{U}^{\mathcal{P}^{\text{div}}}_{2,n})\leq n\frac{\mu(\mathcal{U}^{\mathcal{P}^{\deg}}_{2,5})}{5}\left(\frac{4q^2}{(q-1)}\right)^{\log^*_{\sqrt{2}}(2n)}=\frac{14}{5}n\left(\frac{4q^2}{(q-1)}\right)^{\log^*_{\sqrt{2}}(2n)}.$$
Moreover, we can check in Table \ref{Tablebilcomp} the bilinear complexity for the algorithms of multiplication in extensions of degree lower than 5. Tables \ref{Tablebilcomp} and \ref{Tablebilcompderiv} gives $\mu(\mathcal{U}^{\mathcal{P}^{\deg}}_{2,2})=3$, $\mu(\mathcal{U}^{\mathcal{P}^{\deg}}_{2,3})=6$ and $\mu(\mathcal{U}^{\mathcal{P},\underline{u}}_{2,4})=10$. These values verify the latest bound.$ $
\end{proof}

The given bound does not count all places in the construction, nor does it count the possible use of derivative evaluations, but it gives an information relatively to $n$. Moreover, the construction of the $\mathcal{P}^{\text{div}}$ is more restrictive than $\mathcal{P}^{\text{deg}}$, and we can think that $\mu(\mathcal{U}^{\mathcal{P}^{\deg}}_{q,n})\leq\mu(\mathcal{U}^{\mathcal{P}^{\text{div}}}_{q,n}),$ but this result is neither proven nor has a counter-example yet. 

\begin{proposition}
Asymptotically, the recursive Chudnovsky-type algorithm $\mathcal{U}^{\mathcal{P}^{\text{div}}}_{q,n}$ for the multiplication in $\mathbb{F}_{q^n}$ of Theorem \ref{g0} has bilinear complexity $$\mu(\mathcal{U}^{\mathcal{P}^{\text{div}}}_{q,n})\in\mathcal{O}(n(2q)^{\log_q^*(n)}).$$
\end{proposition}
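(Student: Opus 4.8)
The plan is to iterate the estimate of Proposition \ref{lemmePdiv} while keeping the multiplicative constants sharp, instead of the crude bounds of Lemma \ref{lemma} and of the ceiling $d=\lceil\log_q(2n)\rceil$ that each cost a factor $2$ in the proof of Theorem \ref{g0}. I fix $q$ throughout (so that constants depending only on $q$ count as absolute) and write $\mu_k:=\mu(\mathcal{U}^{\mathcal{P}^{\text{div}}}_{q,k})$. Let $d$ be the least integer with $q^{d}\ge 2n$, so $q^{d}<2qn$ by minimality. Proposition \ref{lemmePdiv} bounds $\mu_n$ by $\sum_{k\mid d}\frac{q^k}{k}\mu(\mathcal{U}^{\mathcal{P}_k}_{q,k})$; using Lemma \ref{majojo} to replace, for each $k\mid d$ with $k<d$, the sub-algorithm $\mathcal{U}^{\mathcal{P}_k}_{q,k}$ by one derived from $\mathcal{U}^{\mathcal{P}^{\text{div}}}_{q,d}$ of no larger complexity, and using that the largest proper divisor of $d$ is $\le d/2$ (so $\sum_{k\mid d,\,k<d}\tfrac{q^k}{k}\le\sum_{k=1}^{\lfloor d/2\rfloor}q^k\le\tfrac{q^{d/2+1}}{q-1}$), I get
\[
\mu_n\ \le\ \Bigl(\sum_{k\mid d}\tfrac{q^k}{k}\Bigr)\mu_d\ <\ \tfrac{2qn}{d}\,\bigl(1+\varepsilon(d)\bigr)\,\mu_d,\qquad \varepsilon(d):=\tfrac{dq}{(q-1)q^{d/2}},
\]
because $\sum_{k\mid d}\tfrac{q^k}{k}\le\tfrac{q^d}{d}+\tfrac{q^{d/2+1}}{q-1}=\tfrac{q^d}{d}(1+\varepsilon(d))$ and $q^d<2qn$, and $\varepsilon(d)\to 0$ as $d\to\infty$. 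The base $2q$ here is exactly $2\cdot q$: the $2$ is forced by needing $q^{d}\ge 2n$ for $\mathcal{P}^{\text{div}}$ to exist, the $q$ by the ceiling in $d$.

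Then I would iterate. Put $d_0=n$, $d_{j+1}=\lceil\log_q(2d_j)\rceil$, and let $m$ be the first index with $d_m\le N_0$, for a fixed threshold $N_0=N_0(q)$ chosen so that the iteration reaches it and $\mu_k\le K(q)$ for all $k\le N_0$ (using the small-extension algorithms of Section \ref{smallext}). Applying the one-step bound at levels $j=1,\dots,m$ (legitimate since $d_{j-1}>N_0\ge\tfrac12 q+1$ for $j\le m$) and telescoping the factors $d_{j-1}/d_j$ gives
\[
\mu_n\ \le\ (2q)^{m}\,\Bigl(\prod_{j=1}^{m}\bigl(1+\varepsilon(d_j)\bigr)\Bigr)\,\frac{n}{d_m}\,\mu_{d_m},
\]
with $\mu_{d_m}\le K(q)$ and $n/d_m\le n$. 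Since $\lceil\log_q(2d)\rceil<d$ for $d$ large, the $d_j$ strictly decrease until they fall below $N_0$, so each integer value $>N_0$ is attained at most once and only boundedly many small values occur; hence $\sum_{j\ge1}\varepsilon(d_j)\le\sum_{k\ge1}\varepsilon(k)+\mathcal{O}(1)$ converges to a constant depending only on $q$, giving $\prod_j(1+\varepsilon(d_j))\le\exp\bigl(\sum_j\varepsilon(d_j)\bigr)=\mathcal{O}(1)$. And once $d_j\ge q^2$ one has $d_{j+1}=\lceil\log_q(2d_j)\rceil\le\log_q(d_j)+2\le\log_{\sqrt q}(d_j)$, so the iteration reaches $N_0$ in $\le\log_{\sqrt q}^*(n)+\mathcal{O}(1)$ steps; comparing the iterated-exponential towers of bases $q$ and $\sqrt q$ gives $\log_{\sqrt q}^*(n)=\log_q^*(n)+\mathcal{O}(1)$, hence $m=\log_q^*(n)+\mathcal{O}(1)$. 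Combining, $\mu_n\le(2q)^{\log_q^*(n)+\mathcal{O}(1)}\cdot\mathcal{O}(1)\cdot n=\mathcal{O}\bigl(n(2q)^{\log_q^*(n)}\bigr)$.

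The recursion is the one already behind Theorem \ref{g0}; the hard part is passing from its uniform base $4q^2/(q-1)$ to the sharp base $2q$. That rests on two delicate points. First, the accumulated correction $\prod_j(1+\varepsilon(d_j))$ is a product of $\log_q^*(n)\to\infty$ many factors, so it is not automatically bounded — it is so only because the $\varepsilon(d_j)$ decay geometrically in $d_j$ and each large level is visited exactly once. Second, the number of recursion levels must equal $\log_q^*(n)$ up to an additive constant despite the spurious factor $2$ inside each logarithm and despite the ceilings, which is where the tower comparison between the bases $q$ and $\sqrt q$ enters. Both points, and the collapse of $\mu_{d_m}$ and of $(2q)^{\mathcal{O}(1)}$ into constants, use that $q$ is held fixed.
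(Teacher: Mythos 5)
Your proof is correct and takes essentially the same route as the paper: iterate the one-step estimate of Proposition \ref{lemmePdiv}, keeping only the dominant contribution $\frac{q^{d}}{d}\,\mu(\mathcal{U}^{\mathcal{P}^{\text{div}}}_{q,d})\leq \frac{2qn}{d}\bigl(1+o(1)\bigr)\mu(\mathcal{U}^{\mathcal{P}^{\text{div}}}_{q,d})$ at each of the $\log_q^*(n)+\mathcal{O}(1)$ recursion levels. The paper only sketches this (deferring details to the elliptic-curve case \cite{babotu}), while you make the omitted steps explicit — the convergent product $\prod_j(1+\varepsilon(d_j))$ over distinct levels and the tower comparison giving $\log_{\sqrt q}^*(n)=\log_q^*(n)+\mathcal{O}(1)$ — and these are sound.
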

\begin{proof}
In the proof of Theorem \ref{g0}, we consider that asymptotically the number of places of degree $d$ of $\mathbb{F}_q(x)$ is in $\mathcal{O}(\frac{q^d}{d})$. Then, we have that $\mu(\mathcal{U}^{\mathcal{P}^{\text{div}}}_{q,n})\in\mathcal O \left(\frac{q^{d_1}}{d_1}\mu(\mathcal{U}^{\mathcal{P}^{\text{div}}}_{q,d_1})\right)$. Remaining calculations are similar, and we obtain $\mu(\mathcal{U}^{\mathcal{P}^{\text{div}}}_{q,n})\in\mathcal{O}(n(2q)^{\log_q^*(n)})$. Details can be found in the proof of the elliptic case \cite{babotu}. 
\end{proof}

This asymptotic bound is the same as for the construction of \cite{babotu}, i.e. with algorithms constructed over elliptic curves. 
On some examples, when the extension is large enough and with a good choice of curve, the algorithm over an elliptic curve might have a lower bilinear complexity, because a function field of genus $1$ can have more places of a fixed degree than the projective line. However, it is not clear that such a generic construction can be obtained by using elliptic curves. Moreover, there is no uniform bound for the bilinear complexity of the construction with elliptic curves yet.

The generalized Karatsuba algorithm, which is a Divide and  Conquer construction based on the multiplication of two polynomials of degree 1 (i.e. $\mathcal U_{2,2}^{\mathcal{P}_2}$ here), requires $\mathcal{O}(n^{\log_23})$ multiplications, all of them bilinear. This is much more than the bilinear complexity obtained with our construction.

\subsection{Complexity of the construction of $\mathcal{U}_{q,n}^{\mathcal{P}^{\text{div}}}$}

We can now describe the complexity of the construction of our algorithms. More precisely, we focus on the family of algorithms $\mathcal{U}_{q,n}^{\mathcal{P}^{\text{div}}}$ obtained using the sets of places $\mathcal{P}^{\text{div}}$ (Algorithm \ref{Pdiv}) to estimate the asymptotic behaviour of the algebraic complexity of the construction. This complexity is given by the number of elementary operations in $\mathbb{F}_q$. We use the standard Landau notation $\mathcal{O}$. Our result requires the following lemma.

\begin{lemma}\label{lemmeconstru}
Let $q$ be a prime power and $n,d$ be positive integers. Let $P$ be a place of degree $d$ of $\mathbb{F}_q(x)$. The $d\times n$ matrix of the evaluations of $x^i$ at $P$, for $i=0,\ldots,n-1$, can be computed with $\mathcal{O}(dn)$ operations in $\mathbb{F}_q$.
\end{lemma}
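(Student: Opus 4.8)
The statement is about computing, for a place $P$ of degree $d$ of $\mathbb{F}_q(x)$, the $d\times n$ matrix whose columns are the coordinate vectors of $x^0, x^1, \dots, x^{n-1}$ in the residue field $F_P \cong \mathbb{F}_{q^d}$, expressed in the basis $\{1, \bar{x}, \dots, \bar{x}^{d-1}\}$ where $\bar{x}$ is the image of $x$. The key observation is that $P$ corresponds to a monic irreducible polynomial $P(x)$ of degree $d$ over $\mathbb{F}_q$, and $F_P = \mathbb{F}_q[x]/(P(x))$, so the evaluation of $x^i$ at $P$ is just the reduction of $x^i$ modulo $P(x)$. The plan is to compute these reductions iteratively: having the coefficient vector of $x^i \bmod P(x)$, multiply by $x$ (a shift) and reduce modulo $P(x)$ to obtain $x^{i+1} \bmod P(x)$.

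\textbf{Main steps.} First I would identify $P$ with its monic irreducible polynomial $P(x) = x^d + p_{d-1}x^{d-1} + \cdots + p_0$, so that reducing any polynomial modulo $P(x)$ amounts to repeatedly using the relation $x^d \equiv -(p_{d-1}x^{d-1} + \cdots + p_0)$. Second, I would set up the recursion: for $i < d$, the column vector representing $x^i$ is simply the standard basis vector $e_i$ (no computation needed). For $i \geq d$, given the column for $x^{i-1}$ — a vector $(a_0, \dots, a_{d-1})$ representing $a_{d-1}x^{d-1} + \cdots + a_0$ — the column for $x^i$ is obtained by multiplying by $x$, which shifts to $(0, a_0, \dots, a_{d-1})$ of degree $d$, then replacing the $x^d$ term $a_{d-1}x^d$ by $-a_{d-1}(p_{d-1}x^{d-1} + \cdots + p_0)$. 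This ``shift and reduce'' step costs $\mathcal{O}(d)$ operations in $\mathbb{F}_q$: one shift, then $d$ scalar multiplications $-a_{d-1}p_j$ and $d$ additions. Third, I would conclude: there are at most $n - d$ columns requiring this step (those with $i \geq d$), and each costs $\mathcal{O}(d)$, giving total cost $\mathcal{O}((n-d)\cdot d) = \mathcal{O}(dn)$; the initial $d$ columns cost $\mathcal{O}(d^2) = \mathcal{O}(dn)$ (they are free, or at most the cost of writing them down). Summing, the whole matrix is computed with $\mathcal{O}(dn)$ operations in $\mathbb{F}_q$.

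\textbf{Anticipated obstacle.} There is no deep obstacle here — the argument is essentially elementary once one recognizes that evaluation at $P$ is reduction modulo the associated irreducible polynomial. The only point requiring a little care is making precise what ``the matrix of evaluations'' means: one must fix the basis $\{1, \bar x, \dots, \bar x^{d-1}\}$ of $F_P$ over $\mathbb{F}_q$ (the natural one attached to $x$ being a primitive element of $F_P/\mathbb{F}_q$ in this setting, since $P(x)$ is its minimal polynomial), and verify that with respect to this basis the reduction step really is an $\mathcal{O}(d)$ operation. One should also note that we do not count the (one-time, input-independent) cost of knowing $P(x)$ itself, which is consistent with the framing elsewhere in the paper. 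A minor subtlety: when $n \leq d$, no reduction occurs at all and the bound $\mathcal{O}(dn)$ trivially bounds the cost of writing the $d \times n$ identity-like matrix; when $n > d$, the recursion dominates and gives exactly the claimed bound.
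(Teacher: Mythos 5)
Your proposal is correct and follows essentially the same route as the paper's proof: identify $P$ with its monic irreducible polynomial $P(x)$, note that evaluation of $x^i$ at $P$ is reduction modulo $P(x)$ in the basis $\{1,\alpha,\dots,\alpha^{d-1}\}$, take the first $d$ columns as the identity, and compute each subsequent column from the previous one by a shift-and-reduce step costing $\mathcal{O}(d)$ operations, for a total of $\mathcal{O}(dn)$. No substantive differences.
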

\begin{proof}
Let $P(x)=\sum_{j=0}^{d}a_jx^j$ be the monic irreducible polynomial associated to the place $P$. For any positive integer $i$, the evaluation of $x^i$ at $P$ is by definition its class in the residue class field (Cf. \cite{stic2}), i.e. in $\frac{\mathbb{F}_q[x]}{(P(x))}= \mathbb{F}_{q^d}$. Let $M$ be the $d\times n$ matrix over $\mathbb{F}_q$ of the evaluations of $x^i$ at $P$, for $i=0,\ldots,n-1$. More precisely, the $i-$th column of $M$ is the evaluation of $x^{i-1}$ at $P$ in the basis $\{1,\alpha,\ldots,\alpha^{d-1}\}$ of $\mathbb{F}_{q^d}$, with $\alpha$ a root of $P(x)$.
We denote by $M_{k,l}$ the coefficient in the $k-$th line and in the $l-$th column of $M$. The $d$ first columns are given by the identity matrix. Then, the $d+1-$th column is given by the opposite of the coefficients of $P(x)$, i.e. $M_{k,d+1}=-a_{k-1}$, for $k=1,\ldots,d$, since $\alpha^d=-\sum_{j=0}^{d-1}a_j\alpha^j$.
Then, every column can be computed from the previous ones and $P(x)$ with $2d$ operations. Indeed, Let $l$ be strictly greater than $d+1$. Then,
$$\begin{array}{ll}
    \alpha^l & = \alpha\times \alpha^{l-1} \\
     & = \alpha\times \sum_{k=1}^d M_{k,l-1}\alpha^{k-1}  \\
     & = -M_{d,l-1}a_{k-1}+\sum_{k=2}^d (M_{k-1,l-1}-M_{d,l-1}a_{k-1})\alpha^{k-1}.
\end{array}$$
Hence $M_{1,l}=-M_{d,l-1}a_{k-1}$, and for $k\geq2$ the coefficient $M_{k,l}$ is given by $M_{k,l}=M_{k-1,l-1}-a_{k-1}M_{d,l-1}.$ Thus, it takes $2d$ operations to compute each of the $n-d-1$ columns. Finally, the matrix $M$ can be computed in time $\mathcal{O}(dn).$ 
\end{proof}

As first stated by Shparlinski, Tsfasman and Vl\v{a}du\c{t} \cite{shtsvl}, the most expensive part of constructing Chudnovsky-type algorithms is to find a degree $n$ place in the function field. In our case, this means constructing a monic irreducible polynomial of degree $n$ over $\mathbb{F}_q$, and there exist algorithms to do it in polynomial time.

\begin{theorem}\label{construccomp}
The recursive Chudnovsky-type algorithm over the projective line $\mathcal{U}^{\mathcal{P}^{\text{div}}}_{q,n}$ of Theorem \ref{g0} given by RPGC is constructible deterministically and in time $\mathcal{O}(n^4)$. 
\end{theorem}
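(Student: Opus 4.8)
The strategy is to bound separately the three ingredients entering the construction of $\mathcal{U}^{\mathcal{P}^{\text{div}}}_{q,n}$: (a) the search for a monic irreducible polynomial of degree $n$ over $\mathbb{F}_q$, which plays the role of the place $Q$; (b) the determination of the set $\mathcal{P}^{\text{div}}$ via Algorithm \ref{Pdiv}, together with all the sets $\mathcal{P}^{\text{div}}_k$ needed for the recursive sub-algorithms; and (c) the computation of all the linear-algebra data attached to the evaluation and interpolation maps (the matrices of $Ev_Q$, $\Tilde{Ev_{\mathcal P}}$ and their inverses, and the evaluation matrices of the $x^i$ at each place). For (a) I would invoke a known deterministic algorithm constructing an irreducible polynomial of degree $n$ over $\mathbb{F}_q$ in time polynomial in $n$ (e.g.\ Shoup's algorithm), which, as recalled after Lemma \ref{lemmeconstru}, costs $\mathcal{O}(n^4)$ up to logarithmic factors in $q$; this is the dominant term and the source of the $n^4$ in the statement.

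For (b) I would argue that $\mathcal{P}^{\text{div}}$ requires only: computing $d=\lceil\log_q(2n)\rceil$, the numbers $B_k$ of places of degree $k\mid d$ via formula \eqref{nbplaces}, the quantity $S$ from \eqref{S}, the residue $\delta$, and the divisor $\ell$ of $d$ — all of which involve integers of size $\mathcal{O}(n)$ and a number of arithmetic steps polynomial in $\log n$, hence negligible. Actually listing the places of each degree $k\mid d$ means enumerating monic irreducible polynomials of degree $k\le d=\mathcal{O}(\log_q n)$; since $d$ is logarithmic in $n$ the total number of such places is $\mathcal{O}(n)$ and each can be found and tested for irreducibility in time polynomial in $d$, so the whole enumeration is $\mathcal{O}(n\,\mathrm{polylog}(n))$, again dominated by the $n^4$. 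The recursion has depth $\log_{\sqrt q}^*(2n)$ and at each level the sets $\mathcal{P}^{\text{div}}_k$ concern extensions of degree at most $d=\mathcal{O}(\log n)$, so the recursive overhead is o($n$).

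For (c) I would use Lemma \ref{lemmeconstru}: for each place $P\in\mathcal{P}^{\text{div}}$ of degree $d_P$, the $d_P\times 2n$ evaluation matrix of $1,x,\dots,x^{2n-1}$ at $P$ costs $\mathcal{O}(d_P n)$ operations; summing over $\mathcal{P}^{\text{div}}$ and using $\sum_{P}\deg P=2n-1$ gives $\mathcal{O}(n^2)$ in total for assembling the matrix of $\Tilde{Ev_{\mathcal P}}$. Inverting this $(2n-1)\times(2n-1)$ matrix over $\mathbb{F}_q$ by Gaussian elimination costs $\mathcal{O}(n^3)$; similarly the matrix of $Ev_Q$ and its inverse (reduction modulo the degree-$n$ polynomial $Q(x)$) cost $\mathcal{O}(n^2)$ to set up and $\mathcal{O}(n^3)$ to invert. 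The analogous costs for the sub-algorithms in smaller extensions are bounded by the same expressions with $n$ replaced by degrees $\le d=\mathcal{O}(\log n)$, times the $\mathcal{O}(n/\log n)$ places of that degree appearing at the top level, and by a geometric-type argument over the recursion levels the grand total stays $\mathcal{O}(n^3)$. Collecting (a), (b), (c), the bottleneck is the irreducible-polynomial construction, giving the claimed $\mathcal{O}(n^4)$, and in particular the whole procedure is deterministic because each sub-step (irreducibility testing, enumeration of places, Gaussian elimination, the matrix recurrence of Lemma \ref{lemmeconstru}) is deterministic.

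The main obstacle I anticipate is bookkeeping the recursion cleanly: one must check that summing the construction costs over all levels of the recursion tree — whose depth is $\log^*$ but whose branching at the top level is $\Theta(n/\log n)$ — does not blow past $\mathcal{O}(n^4)$, and that the $\mathcal{O}(n^3)$ Gaussian-elimination cost at the root genuinely dominates the contributions of the (many, but individually tiny) sub-blocks. A careful statement would phrase this as: the cost at a node handling degree $m$ is $\mathcal{O}(m^3)+\mathcal{O}(m^4)$, the degree-$m$ nodes at depth one satisfy $m=\mathcal{O}(\log n)$ and number $\mathcal{O}(n)$, and deeper nodes contribute even less, so the sum telescopes into the root cost $\mathcal{O}(n^4)$. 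Everything else is routine once Lemma \ref{lemmeconstru} and a black-box deterministic irreducible-polynomial construction are in hand.
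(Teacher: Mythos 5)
Your proposal is correct and follows essentially the same decomposition as the paper: the deterministic construction of the degree-$n$ irreducible polynomial via Shoup dominates with $\mathcal{O}(n^4)$, the places of degree dividing $d=\mathcal{O}(\log_q n)$ are enumerated in quasi-linear time, the evaluation matrices are assembled via Lemma \ref{lemmeconstru} in $\mathcal{O}(n^2)$, matrix inversion and the recursive sub-algorithms stay below the leading term. The only (immaterial) differences are that the paper lists the places by grouping rational points of $\mathbb{F}_{q^d}(x)$ with the iterated Frobenius rather than by direct irreducibility testing, and inverts with Strassen ($\mathcal{O}(n^{\log_2 7})$) rather than Gaussian elimination.
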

\begin{proof}
To obtain the degree $n$ place of $\mathbb{F}_q(x)$, we compute a monic irreducible polynomial of degree $n$ over $\mathbb{F}_q$ thanks to Shoup \cite{Shoup2} in time $\mathcal{O}(n^4)$. We use as a divisor $\D=(n-1)P_\infty$, thus we set $\{1,x,\ldots,x^{n-1}\}$ as the basis of $\mathcal{L}(\D)$, and $\{1,x,\ldots,x^{2n-2}\}$ as basis of $\mathcal{L}(2\D)$.

From Theorem \ref{g0}, constructing the places of degrees dividing $d$, for $\log_q(2n) \leq d \leq \log_q(2n)+1$, is enough. Asymptotically, the basis of the logarithm makes no difference and we can consider that $\mathcal{O}(\frac{q^d}{d})$ places of degree $d$ are required, with $d\in\mathcal{O}(\log(2n))$. Hence, we have to construct the rational places of $\mathbb{F}_{q^d}(x)$ and group them in places of $\mathbb{F}_q(x)$ by applying them the iterated Frobenius. From Von zur Gathen and Gerhard book \cite{voge}, Algorithm 14.26 computes the iterated Frobenius, which for a given $\alpha\in\mathbb{F}_{q^d}$ outputs $\alpha,\alpha^q,\ldots,\alpha^{q^k}$, for $k$ dividing $d$. Its complexity is in time $\mathcal{O}(M(d)^2\log(d)\log(k))$, where $M(d)\in\mathcal{O}(d^\omega)$, with $\omega$ the best exponent for the multiplication of two matrices of size $d\times d$. Currently, $\omega\simeq2,373\ldots$. With this algorithm, the $\mathcal{O}(\frac{q^d}{d})$ places of degree $d$ can be constructed with $\mathcal{O}\big(\frac{q^d}{d}M(d)^2\log(d)\log(d)\big)=\mathcal{O}\big(2n(\log(2n))^{2\omega-1}\log(\log2n)^2\big)$ operations. 

Then, we construct the matrices of $\Tilde{E_P}$, $\Tilde{Ev_P}^{-1}$, $E_Q$ and $Ev_Q^{-1}$ of $\mathcal{U}_{q,n}^{\mathcal{P}^{\text{div}}}$. The matrix $Ev_Q^{-1}$ is the identity. The construction of $E_Q$ is given by the evaluation at $Q$ of the basis of $\mathcal{L}(2\D)$ and can be computed in $\mathcal{O}((2n)^2)$ operations thanks to Lemma \ref{lemmeconstru}. The matrix $\Tilde{Ev}_P$ is obtained with $\mathcal{O}(\frac{q^d}{d})$ matrices $d\times 2n-1$ as in Lemma \ref{lemmeconstru}. Hence, it can be computed in $\mathcal{O}(\frac{q^d}{d}2nd)=\mathcal{O}(4n^2)$ operations. All other required matrices are sub-matrices or inverses of $\Tilde{Ev}_P$. The inverses of the matrices can be done with Strassen algorithm \cite{stra2} and can be performed in $\mathcal{O}((2n)^{\log_2(7)})$. 

Finally, the algorithm $\mathcal{U}_{q,n}^{\mathcal{P}^{\text{div}}}$ involves recursively defined algorithms to compute multiplications in intermediate extensions, and we need to construct $\mathcal{U}_{q,d}^{\mathcal{P}^{\text{div}}}(Q_d)$ for $Q_d$ traversing the set of places used for the interpolation. Asymptotically, recall that we consider  $\mathcal{O}(\frac{q^d}{d})$ places of degree $d\in\mathcal{O}(\log(2n))$. 
For a given degree $d$ place $Q_d$, the construction $\mathcal{U}_{q,d}^{\mathcal{P}^{\text{div}}}(Q_d)$ is negligible, since $d\in\mathcal{O}(\log(2n))$.  
Nevertheless, the matrices $E_{Q_d}$ have to be computed for $\mathcal{O}(\frac{q^d}{d})$ places of degree $d$. Each of these can be obtained in $\mathcal{O}(d\log d)$ operations by Lemma \ref{lemmeconstru}, and the construction of all these matrices costs $\mathcal{O}(q^d\log d)=\mathcal{O}(2n\log \log 2n)$ operations in the base field.

Putting this all together, we obtain a complexity of construction in $\mathcal{O}(n^4)$ operations. \end{proof}

This bound is for a deterministic construction. In practice, we can construct the monic irreducible polynomial of degree $n$ using the Las Vegas-type algorithm by Couveignes and Lercier \cite{cole2}, with running time $n^{1+\epsilon(n)}\times(\log q)^{5+\epsilon(q)}$, where $\epsilon(x)$ are functions in $\mathcal{O}(1)$.

\begin{theorem}
Using a probabilistic algorithm to compute the monic irreducible polynomial of degree $n$, the recursive Chudnovsky-type algorithm over the projective line $\mathcal{U}^{\mathcal{P}^{\text{div}}}_{q,n}$ of Theorem \ref{g0} given by RPGC is constructible in expected time $\mathcal{O}((2n)^{\log_2 7})$. 
\end{theorem}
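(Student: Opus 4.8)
The plan is to rerun the proof of Theorem~\ref{construccomp} essentially verbatim, changing only the very first step, namely the construction of the degree~$n$ place of $\mathbb{F}_q(x)$ (equivalently, of a monic irreducible polynomial of degree~$n$ over $\mathbb{F}_q$). In Theorem~\ref{construccomp} this step is done deterministically with Shoup's algorithm \cite{Shoup2} at cost $\mathcal{O}(n^4)$, and it is that term which dominates the total. Replacing it by the Las Vegas algorithm of Couveignes and Lercier \cite{cole2}, whose expected running time is $n^{1+\epsilon(n)}(\log q)^{5+\epsilon(q)}$ with $\epsilon(n)\to 0$, this step now runs in expected quasi-linear time in $n$, hence in $o\bigl((2n)^{\log_2 7}\bigr)$ since $\log_2 7\approx 2.807>2>1$. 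Since this is the only randomized ingredient of the whole construction, it then suffices to recall that every remaining (deterministic) step is bounded by the matrix-inversion cost already isolated in Theorem~\ref{construccomp}, and that the latter is $\mathcal{O}\bigl((2n)^{\log_2 7}\bigr)$.

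Concretely, I would go through the deterministic steps listed in the proof of Theorem~\ref{construccomp} and record their costs: constructing the places of degree dividing $d$, with $d\in\mathcal{O}(\log(2n))$, via the iterated Frobenius (Algorithm~14.26 of \cite{voge}) costs $\mathcal{O}\bigl(2n(\log 2n)^{2\omega-1}(\log\log 2n)^2\bigr) = o(n^2)$; the evaluation matrices $\Tilde{E_P}$, $\Tilde{Ev_P}$ and $E_Q$ are assembled from $\mathcal{O}(q^d/d)$ blocks of size $d\times(2n-1)$, each obtained in $\mathcal{O}(d\,n)$ operations by Lemma~\ref{lemmeconstru}, for a total of $\mathcal{O}(n^2)$, while $Ev_Q^{-1}$ is the identity; and in the recursion the sub-algorithms $\mathcal{U}^{\mathcal{P}^{\text{div}}}_{q,d}(Q_d)$ are built for extension degrees $d\in\mathcal{O}(\log(2n))$, hence each is negligible, the only non-trivial contribution being the family of projections $E_{Q_d}$ over the $\mathcal{O}(q^d/d)$ degree-$d$ places, which together cost $\mathcal{O}(q^d\log d)=\mathcal{O}(2n\log\log 2n)$. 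The single step exceeding $\mathcal{O}(n^2)$ is the inversion of the $(2n-1)\times(2n-1)$ matrix $\Tilde{Ev_P}$ (and of the strictly smaller analogous matrices of the sub-algorithms), carried out by Strassen's algorithm \cite{stra2} in $\mathcal{O}\bigl((2n)^{\log_2 7}\bigr)$.

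Summing, the expected running time of the construction is
$$n^{1+\epsilon(n)}(\log q)^{5+\epsilon(q)}+\mathcal{O}(n^2)+\mathcal{O}\bigl((2n)^{\log_2 7}\bigr)=\mathcal{O}\bigl((2n)^{\log_2 7}\bigr),$$
which is the claimed bound. There is no genuinely hard step here; the only point that needs a little care is the bookkeeping: one must check that the expectation is taken only over the randomized irreducible-polynomial step (everything else being deterministic) and that $n^{1+\epsilon(n)}$ really is $o\bigl((2n)^{\log_2 7}\bigr)$, which holds because $\epsilon(n)\to 0$ so that $1+\epsilon(n)<\log_2 7$ for all large $n$, the finitely many small values of $n$ being absorbed into the implied constant.
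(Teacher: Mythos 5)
Your proposal is correct and follows essentially the same route as the paper: swap Shoup's deterministic algorithm for the Couveignes--Lercier Las Vegas algorithm in the proof of Theorem~\ref{construccomp}, after which the dominant cost is the Strassen matrix inversion in $\mathcal{O}\bigl((2n)^{\log_2 7}\bigr)$. Your version merely spells out the cost bookkeeping of the deterministic steps in more detail than the paper does.
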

\begin{proof}
In the previous proof, we replace Shoup's algorithm by the one of Couveignes and Lercier \cite{cole2} to find the monic irreducible polynomial. Then, the heaviest part of the complexity consists in computing the inverse of the matrices. 
\end{proof}

\begin{remark}\label{conell}
The construction over elliptic curves \cite{babotu} is also polynomial but more expensive. For instance, the construction of the degree $n$ place requires to compute the iterated Frobenius of a nontrivial rational point of the curve over $\mathbb{F}_{q^n}$. With the algorithm from \cite{voge}, as in the proof of Theorem \ref{construccomp}, it takes $\mathcal{O}(n^{2\omega}\log(n)^2)$ operations to construct this degree $n$ place. This is already above the complexity of the construction of our algorithms. Moreover, there is no precise estimation of this complexity yet.\end{remark}

\section{Further works}
We studied the method of Chudnovsky and Chudnovsky for the genus $g=0$. This allowed us to introduce a strategy for constructing a polynomial interpolation algorithm having a competitive bilinear complexity for any extension. The total complexity of these algorithms still needs to be improved, in particular by studying their scalar complexity.

\bibliographystyle{plain}
\bibliography{biblio}

\begin{thebibliography}{10}

\bibitem{arna1}
Nicolas Arnaud.
\newblock {\em {\'E}valuations d\'eriv\'ees, multiplication dans les corps
  finis et codes correcteurs}.
\newblock PhD thesis, Universit\'e de la {M}\'editerran\'ee, {I}nstitut de
  {M}ath\'ematiques de {L}uminy, 2006.

\bibitem{ball1}
St\'ephane Ballet.
\newblock {C}urves with {M}any {P}oints and {M}ultiplication {C}omplexity in
  {A}ny {E}xtension of $\mathbb{F}_q$.
\newblock {\em {F}inite {F}ields and {T}heir {A}pplications}, 5:364--377, 1999.

\bibitem{babotu}
St\'ephane Ballet, Alexis Bonnecaze, and Mila Tukumuli.
\newblock {O}n the construction of elliptic {C}hudnovsky-type algorithms for
  multiplication in large extensions of finite fields.
\newblock {\em Journal of Algebra and Its Applications}, 15(1):26 pages, 2016.

\bibitem{baro1}
St\'ephane Ballet and Robert Rolland.
\newblock Multiplication algorithm in a finite field and tensor rank of the
  multiplication.
\newblock {\em {J}ournal of {A}lgebra}, 272(1):173--185, 2004.

\bibitem{survey}
Stéphane Ballet, Jean Chaumine, Julia Pieltant, Matthieu Rambaud, Hugues
  Randriambololona, and Robert Rolland.
\newblock On the tensor rank of multiplication in finite extensions of finite
  fields and related issues in algebraic geometry.
\newblock {\em Uspekhi Mathematichskikh Nauk, 76:1(457), 31–94}, 2021.

\bibitem{ceoz}
Murat Cenk and Ferruh {\"O}zbudak.
\newblock On multiplication in finite fields.
\newblock {\em {J}ournal of {C}omplexity}, pages 172--186, 2010.

\bibitem{chch}
David Chudnovsky and Gregory Chudnovsky.
\newblock Algebraic complexities and algebraic curves over finite fields.
\newblock {\em {J}ournal of {C}omplexity}, 4:285--316, 1988.

\bibitem{cole2}
Jean-Marc Couveignes and Reynald Lercier.
\newblock Fast construction of irreducible polynomials over finite fields.
\newblock {\em Israel Journal of Mathematics}, 194(1):77--105, Mar 2013.

\bibitem{groo}
Hans De~Groote.
\newblock Characterization of division algebras of minimal rank and the
  structure of their algorithm varieties.
\newblock {\em {S}{I}{A}{M} {J}ournal on {C}omputing}, 12(1):101--117, 1983.

\bibitem{gast}
Arnaldo Garcia and Henning Stichtenoth.
\newblock {A tower of Artin-Schreier extensions of function fields attaining
  the Drinfeld-{V}l\u{a}du\c{t} bound}.
\newblock {\em {I}nventiones {M}athematicae}, 121:211--222, 1995.

\bibitem{randJComp}
Hugues Randriambololona.
\newblock Bilinear complexity of algebras and the {C}hudnovsky-{C}hudnovsky
  interpolation method.
\newblock {\em {J}ournal of {C}omplexity}, 28(4):489--517, 2012.

\bibitem{Shoup2}
Victor Shoup.
\newblock New algorithms for finding irreducible polynomials over finite
  fields.
\newblock {\em Mathematics of Computation}, 54, 01 1997.

\bibitem{shtsvl}
Igor Shparlinski, Michael Tsfasman, and Serguei {V}l\u{a}du\c{t}.
\newblock Curves with many points and multiplication in finite fields.
\newblock In {H}. {S}tichtenoth and {M}.{A}. {T}sfasman, editors, {\em {C}oding
  {T}heory and {A}lgebraic {G}eometry}, number 1518 in {L}ectures {N}otes in
  {M}athematics, pages 145--169, Berlin, 1992. Springer-Verlag.
\newblock Proceedings of AGCT-3 conference, June 17-21, 1991, Luminy.

\bibitem{stic2}
Henning Stichtenoth.
\newblock {\em Algebraic Function Fields and Codes}.
\newblock Number 254 in {G}raduate {T}exts in {M}athematics. Springer-Verlag,
  second edition, 2008.

\bibitem{stra2}
Volker Strassen.
\newblock Gaussian elimination is not optimal.
\newblock {\em Numer. Math.}, 13(4):354–356, August 1969.

\bibitem{voge}
Joachim von~zur Gathen and Jugen Gerhard.
\newblock {\em Modern Computer Algebra}.
\newblock Cambridge University Press, 2003.

\bibitem{wino3}
Shmuel Winograd.
\newblock {On Multiplication in Algebraic Extension Fields}.
\newblock {\em {T}heoretical {C}omputer {S}cience}, 8:359--377, 1979.

\end{thebibliography}

\end{document}